\documentclass[a4paper,12pt,reqno]{amsart}
\usepackage{amsmath,graphicx,graphics,amssymb}
\usepackage{accents}

\textwidth17cm
\textheight25cm
\oddsidemargin0cm
\evensidemargin0cm
\addtolength{\topmargin}{-1.0in}

\newtheorem{lem}{Lemma}

\newtheorem{theo}{Theorem}

\newtheorem{prop}{Proposition}

\numberwithin{equation}{section}

\newcommand\thickbar[1]{\accentset{\rule{.4em}{.6pt}}{#1}}

\newcommand{\M}{\operatorname{M}}

\newcommand{\h}{\operatorname{H}}
\newcommand{\epf}{\hfill{$\square$}\medskip}

\newcommand{\de}{\operatorname{d}}
\newcommand{\w}{\operatorname{w}}
\newcommand{\ka}{\operatorname{k}}

\newmuskip\pFqskip
\pFqskip=6mu
\mathchardef\pFcomma=\mathcode`, % keep a copy of the comma

\begin{document}

\title{Tilings of hexagons with a removed triad of bowties}

\author[Mihai Ciucu, Tri Lai and Ranjan Rohatgi]{\box\Adr}

\newbox\Adr
\setbox\Adr\vbox{ 
\vspace{0.5cm} 
\centerline{ \large Mihai Ciucu} \vspace{0.25cm}
\centerline{Department of Mathematics, Indiana University}
\centerline{Bloomington, IN 47401, USA}
%\centerline{{\tt mciucu@indiana.edu}} 
\vspace{0.5cm}
\centerline{ \large Tri Lai} \vspace{0.25cm}
\centerline{Department of Mathematics, University of Nebraska -- Lincoln}
\centerline{Lincoln, NE 68588}
\vspace{0.4cm} \centerline{and}  \vspace{0.4cm}
\centerline{ \large Ranjan Rohatgi} \vspace{0.25cm}
\centerline{Department of Mathematics and Computer Science, Saint Mary's College}
\centerline{Notre Dame, IN, 46556}
}

\thanks{M. C. was partially supported by the National Science Foundation DMS grant 1501052;
  T. L. was supported in part  by Simons Foundation Collaboration Grant \# 585923.
}

\begin{abstract} In this paper we consider arbitrary hexagons on the triangular lattice with three arbitrary bowtie-shaped holes, whose centers form an equilateral triangle. The number of lozenge tilings of such general regions is not expected --- and indeed is not --- given by a simple product formula. However, when considering a certain natural normalized counterpart $\thickbar{R}$ of any such region $R$, we prove that the ratio between the number of tilings of $R$ and the number of tilings of $\thickbar{R}$ is given by a simple, conceptual product formula. Several seemingly unrelated previous results from the literature --- including Lai's formula for hexagons with three dents and Ciucu and Krattenthaler's formula for hexagons with a removed shamrock --- follow as immediate special cases of our result.

\end{abstract}

\maketitle

\section{Introduction}

%\bigskip
%\bigskip
%{\bf Change things so that Remark 1 is REMOVED --- we use that as a base case!}
%\bigskip
%\bigskip

MacMahon's classical formula \cite{MacM} stating that the number of plane partitions that fit in an $x\times y\times z$ box is equal to
\begin{equation}
P(x,y,z)=\prod_{i=1}^x\prod_{j=1}^y\prod_{k=1}^z \frac{i+j+k-1}{i+j+k-2}
\label{eaa}
\end{equation}
has served as motivation and source of inspiration for a considerable amount of work in enumerative combinatorics for the past three decades. Following David and Tomei's \cite{DT} elegant observation that such boxed plane partitions are in one-to-one correspondence with lozenge tilings of a hexagon of sides $x,y,z,x,y,z$ (in cyclic order) on the triangular lattice, a lot of this research has been phrased in terms of lozenge tilings.

Generalizations of MacMahon's formula include \cite{CEKZ}\cite{pp1}\cite{sc}\cite{Vuletic}\cite{ff}\cite{fv}\cite{Lai3dent}\cite{Rohatgi2dent}\cite{2i}\cite{LRshuffle}\cite{Lshuffle}\cite{Byun}.
%Related work concerning symmetry classes (see \cite{StanPP} for motivating background) include \cite{pptwo}...

In this paper we consider a family of regions which generalizes several of the regions involved in the above mentioned previous work in the literature. We call our regions {\it triad hexagons} --- arbitrary hexagons on the triangular lattice with three bowtie-shaped holes arranged in a triad, so that the nodes of the bowties form a lattice triangle.

\begin{figure}[h]
%\vskip0.15in
  \centerline{
\hfill
{\includegraphics[width=0.44\textwidth]{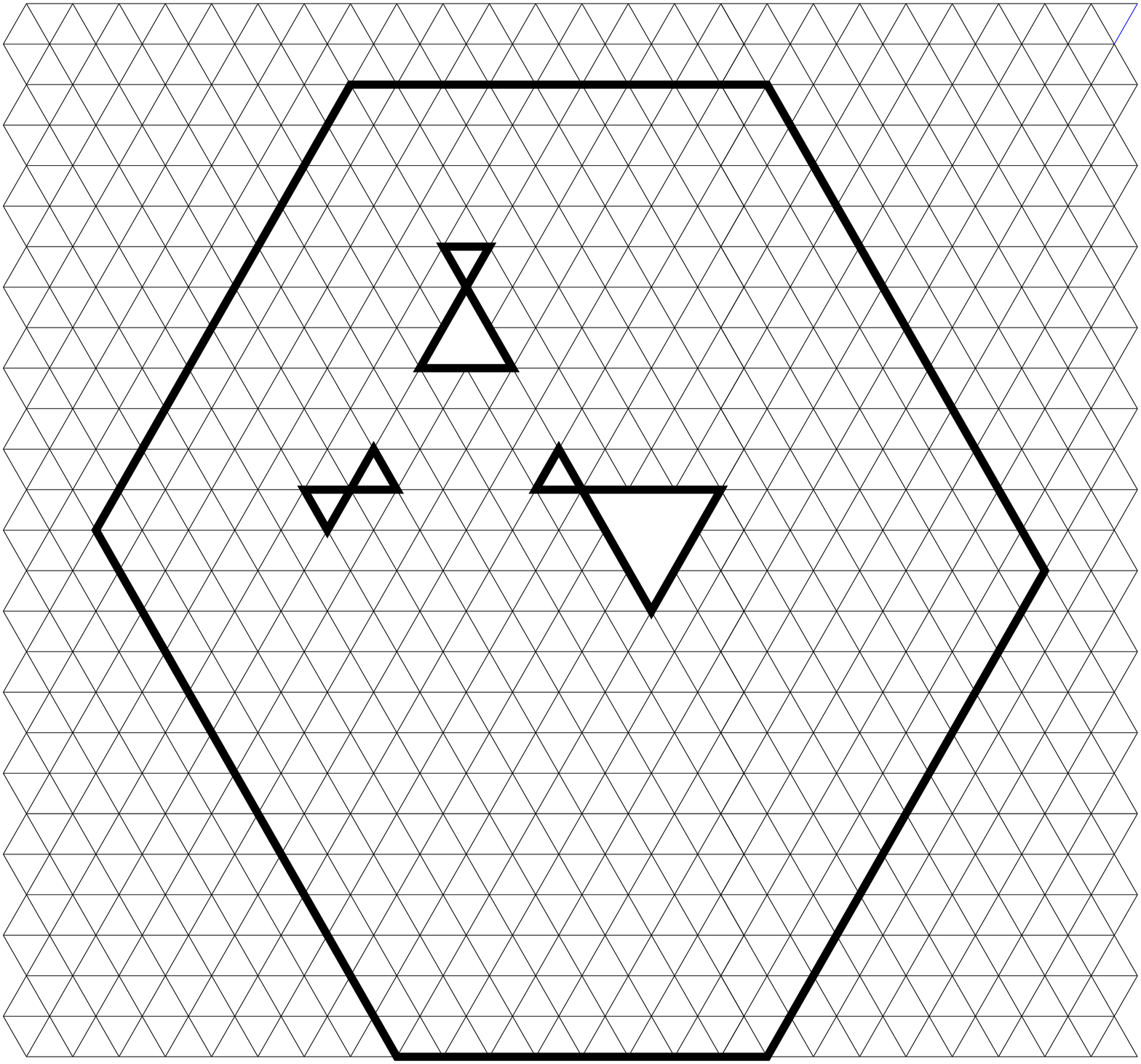}}
\hfill
{\includegraphics[width=0.44\textwidth]{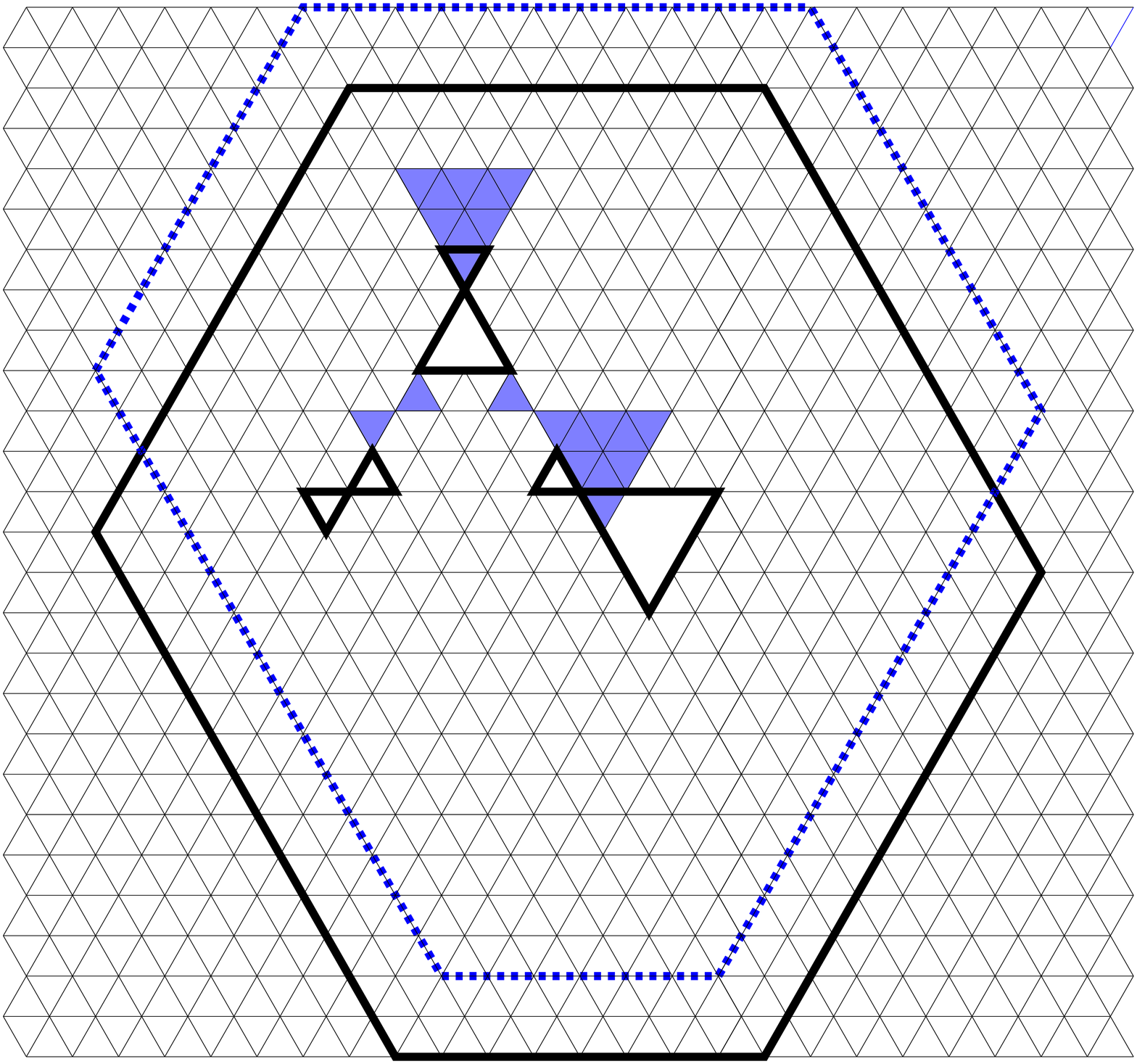}}
%\hfill
}
\vskip0.2in
  \centerline{
\hfill
{\includegraphics[width=0.44\textwidth]{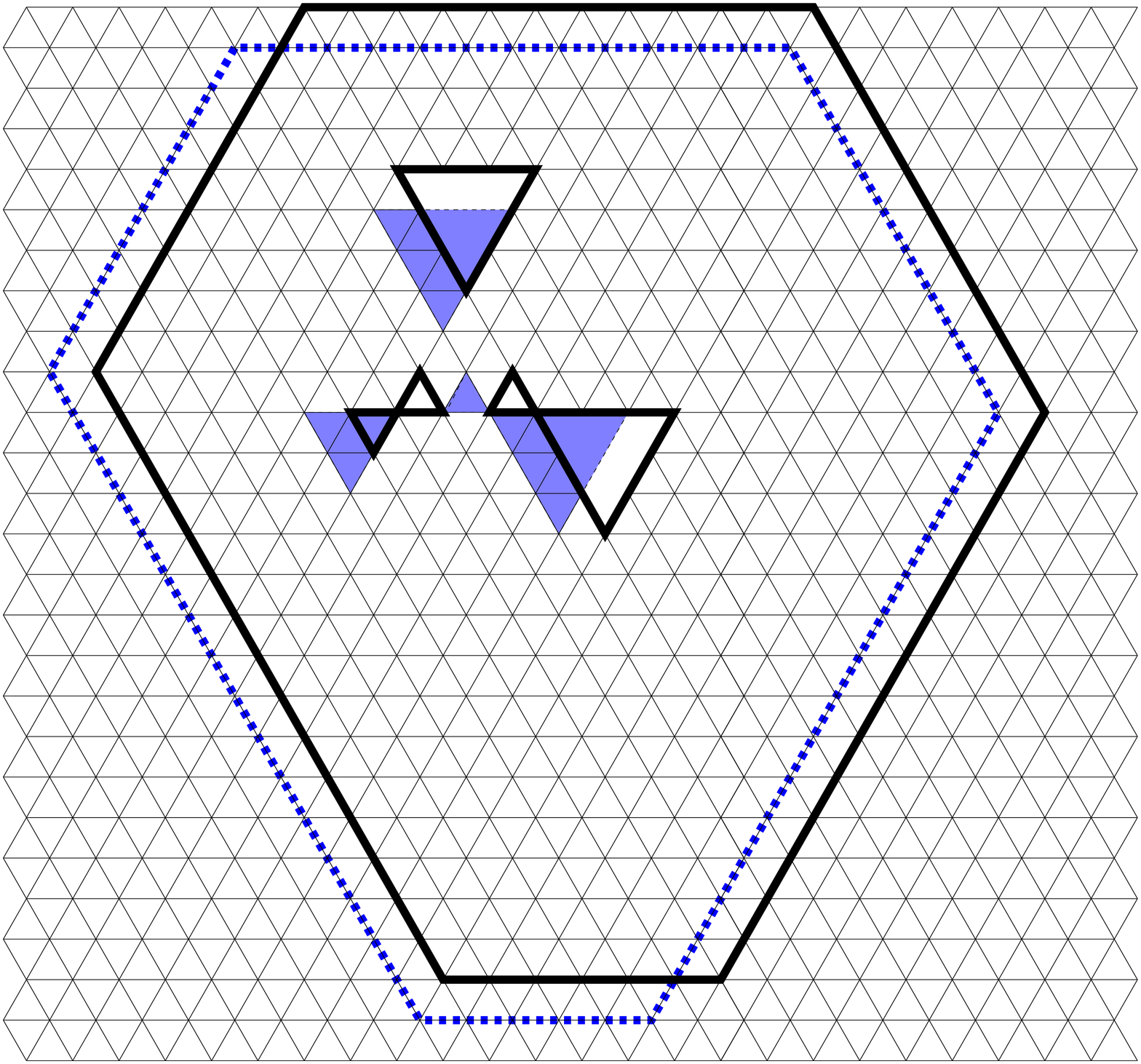}}
\hfill
{\includegraphics[width=0.44\textwidth]{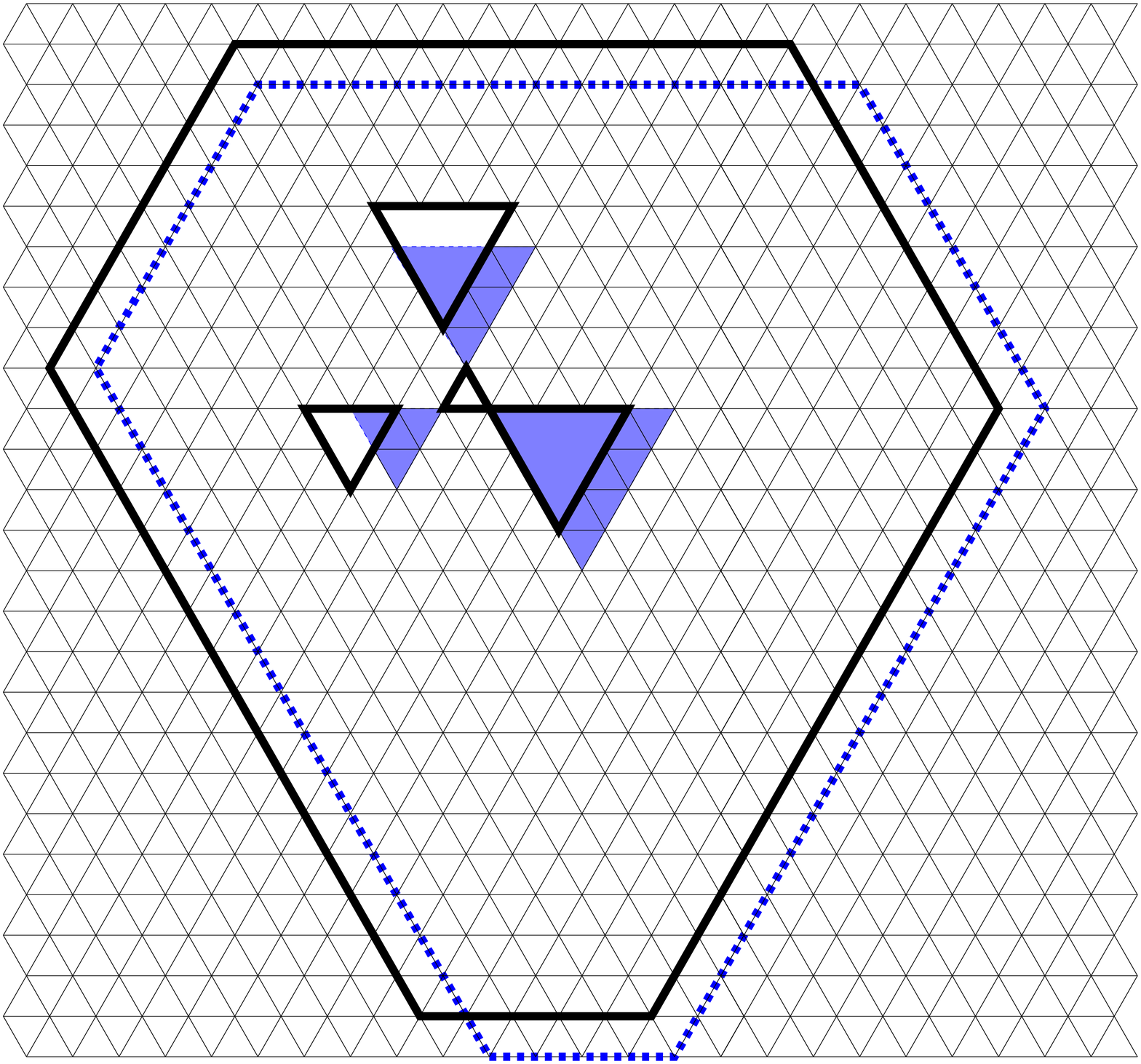}}
%\hfill
}
%\vskip-0.1in
  \caption{\label{faa} A triad hexagon $R$ with $(x,y,z)=(4,8,7)$, $(a,b,c)=(1,1,3)$ and $(a',b',c')=(2,1,1)$ (top left); squeezing out the top bowtie (top right); squeezing out the left bowtie (bottom left); squeezing out the right bowtie --- the region $\bar{R}$ (bottom right).}
%\vskip-0.15in
\end{figure}

The main result of this paper is not a product formula for the number of lozenge tilings of a single such triad hexagon (a simple product formula does not seem to exist in general). Instead, we define a natural equivalence relation on the set of triad hexagons based on an operation we call {\it bowtie squeezing}, and we prove that the ratio of the number of tilings of any two regions in the same equivalence class is given by a simple, conceptual product formula.

Several of the mentioned results from the literature, including Lai's formula \cite{Lai3dent} for the number of lozenge tilings of hexagons with three dents, and Ciucu and Krattenthaler's formula \cite{ff} concering hexagons with a removed shamrock, follow as immediate special cases of our result. Given the simple form of our current formula, this point of view helps to understand conceptually the original formulas, which were less structured and more complicated.

\section{Statement of main results}

A {\it bowtie} is a union of two oppositely oriented, not necessarily congruent lattice triangles sharing a vertex, called the {\it node}; a bowtie with down-pointing lobe of side-length $a$ and up-pointing lobe of side-length $a'$ is said to have type $(a,a')$, and is referred to as an $(a,a')$-bowtie. Three bowties form a {\it triad} if their nodes form a lattice triangle housing at each of its three angles one bowtie lobe (see the top left picture in Figure \ref{faa} for in example).

Suppose we remove a triad of bowties, say of types $(a,a')$, $(b,b')$ and $(a,a')$ (counterclockwise from top), from a hexagonal region. It is not hard to see that, provided the resulting region has the same number of up- and down-pointing unit triangles (a necessary condition for tileability by lozenges\footnote{ A lozenge is the union of two unit triangles sharing an edge.}), the side-lengths of the hexagon must be of the form $x+a+b+c$, $y+a'+b'+c'$, $z+a+b+c$, $x+a'+b'+c'$, $y+a+b+c$, $z+a'+b'+c'$, with $x,y,z$ non-negative integers.

Indeed, take a lozenge tiling of our region, and consider in it the $a+b+c$ paths of lozenges that start upward along the horizontal edges of the down-pointing lobes. These must end somewhere along the top side of the hexagon; if the number of unit segments on this side where no such path ends is $x$, then the top side has length $x+a+b+c$. An analogous argument, involving the paths of lozenges starting downward from the horizontal edges of the up-pointing lobes, shows that the bottom side of the hexagon has length $x'+a'+b'+c'$, where $x'$ is some non-negative integer. Because the paths of lozenges that start at the bottom side and do not end at the lobes can only end at places on the top side not connected by paths of lozenges to the lobes, we must have $x'=x$. Repeating this argument for the other two pairs of opposite sides of the hexagon we obtain the claim in the previous paragraph.

If the nodes of the bowties in the triad are at points $A$, $B$ and $C$ (counterclockwise from top), we denote this bowtie by $R_{x,y,z}^{A,B,C}(a,b,c,a',b',c')$. We emphasize that the upper indices denote the geometrical position of the nodes, and not numbers. This hybrid notation between integer parameters and geometric positions is best suited for bringing out the conceptual form of our formulas. We call the points $A$, $B$ and $C$ {\it focal points}, and the segments $AB$, $AC$ and $BC$ {\it focal edges}.

We call the distance $|AB|=|AC|=|BC|$ (measured in unit triangle side lengths) the {\it focal distance} of the triad hexagon, and we denote it by $f$. Note that if $R_{x,y,z}^{A,B,C}(a,b,c,a',b',c')$ can be tiled by lozenges, we must necessarily have $f\geq a'+b'+c'$. One can see this for instance by considering a tiling and following the paths of lozenges that start along the horizontal side of the lobe of size $a'$ of the top bowtie: These $a'$ paths must fit through the gap determined by the bottom two bowties, which has size $f-b'-c'$, so $a'\leq f-b'-c'$, proving our claim.

Therefore, throughout this paper we will assume that the focal distance $f$ of our triad hexagons satisfies $f\geq a'+b'+c'$.

%Define triad hexagons in detail. Assume WLOG f>=ap+bp+cp (o/w no tilings! --- good to avoid this case, so that we don't divide by 0).

We now define the operation of {\it bowtie squeezing}, which turns a given triad hexagon into another triad hexagon, as follows. Given a triad hexagon $R$, the triad hexagon obtained from $R$ by {\it squeezing out the $(a,a')$-bowtie $d$ units}, where $d\leq a'$, is the region obtained from $R$ by

\bigskip
$(i)$ keeping the node $A$ fixed and replacing the $(a,a')$-bowtie with an $(a+d,a'-d)$-bowtie

\medskip
$(ii)$ translating the $(b,b')$- and $(c,c')$-bowties $d$ units (measured in unit triagle sides) in the $\overrightarrow{BA}$ and $\overrightarrow{CA}$ directions, respectively

\medskip
$(iii)$ pushing out $d$ units (measured in lattice spacings) the top three sides of the hexagon, and pulling in $d$ units the bottom three sides of the hexagon.

\bigskip
The top right picture in Figure \ref{faa} illustrates the operation of squeezing out the top bowtie two units. The resulting triad hexagon has the outer boundary indicated by the thick dotted line, and its removed bowties are shaded (the inner lobe of the resulting top bowtie is empty, as that lobe was completely squeezed out).

The operation of squeezing out the other two bowties is defined by symmetry. The inverse of the described operation is called {\it squeezing in the $(a,a')$-bowtie $d$ units}; it is defined for $d\leq a$.

Note that the difference between the focal length and the sum of the sizes of the inner lobes is invariant under bowtie squeezing: both decrease (resp. increase) by $d$ units when a bowtie is squeezed out (resp., squeezed in) $d$ units. This implies in particular that, since the bowties in $R$ have disjoint interiors, so do the bowties in any triad hexagon obtained from $R$ by a sequence of bowtie squeezings.

%Define bowtie squeezing

One special triad hexagon we get from $R$ is the one obtained by squeezing out completely all three bowties. Figure \ref{faa} shows an example (the top right, bottom left and bottom right pictures illustrate the operation of squeezing out successively the top, left and right bowtie, respectively). We denote the resulting region, in which all three inner lobes have shrunk to zero, by $\thickbar{R}$.

%Appl: Define R0 (Figs 1-3)

Two triad hexagons are said to be equivalent if one can be obtained from the other by a sequence of bowtie squeezing operations. This is obviously an equivalence relation on the set of triad hexagons.

%Equivalence classes (two other figs --- squeeze first lobe 1 unit out, 3rd two units in).

Our main result is a simple product formula for the ratio of the number of tilings of any two triad hexagons in the same equivalence class. To state it, we need to define the {\it weight} of a triad and the {\it couple} of a focal point and of a focal edge.

Recall that the hyperfactorial $\h(n)$ is defined by $H(0):=1$ and
\begin{equation}
\label{eba}
  \h(n):=0!\,1!\cdots(n-1)!, \ \ \ n\geq1.
\end{equation}

For a triad of bowties of types $(a,a')$, $(b,b')$, $(c,c')$ and focal distance $f$, we define its {\it weight} $\w$ by
\begin{equation}
\label{ebb}
\w:=\frac{\h(f)^4\h(a)\h(b)\h(c)\h(a')\h(b')\h(c')}{\h(f+a)\h(f+b)\h(f+c)\h(f-a')\h(f-b')\h(f-c')}.
\end{equation}

\medskip
For a triad hexagon $R=R_{x,y,z}^{A,B,C}(a,b,c,a',b',c')$, we define the weight $\w^{(R)}$ to be equal to the quantity $\w$ given by \eqref{ebb}. Note that $\w^{(R)}$ depends only on the triad of bowties, and not on the position of the triad inside the hexagon.

We also define the {\it couples of the focal points} $A$, $B$ and $C$,  by
\begin{align}
\label{ebca}
\ka_A^{(R)}&:=\h(\de(A,N))\h(\de(A,S))
\\[5pt]
\label{ebcb}
\ka_B^{(R)}&:=\h(\de(B,NE))\h(\de(B,SW))
\\[5pt]
\label{ebcc}
\ka_C^{(R)}&:=\h(\de(C,NW))\h(\de(C,SE)),
\end{align}
where $\de(A,N)$ denotes the distance between $A$ and the northern side of the outer boundary of $R$ (expressed in lattice spacings), $\de(B,NE)$ is the distance between $B$ and the northeastern boundary, and so on.

Similarly, the {\it couples of the focal segments} $BC$, $AC$ and $AB$ are defined by
\begin{align}
\label{ebda}
\ka_{BC}^{(R)}&:=\h(\de(BC,N))\h(\de(BC,S))
\\[5pt]
\label{ebdb}
\ka_{AC}^{(R)}&:=\h(\de(AC,NE))\h(\de(AC,SW))
\\[5pt]
\label{ebdc}
\ka_{AB}^{(R)}&:=\h(\de(AB,NW))\h(\de(AB,SE)).
\end{align}
It will be helpful in the formulation and proof of our main result to characterize the triad hexagons that are tileable (i.e., admit at least one lozenge tiling). To this end, define the {\it S-depth}, {\it NE-depth} and {\it NW-depth}, of a triad hexagon to be equal to
\begin{align}
&
\de(BC,S)-b-c
\nonumber
\\[5pt]
&
\de(AC,NE)-a-c
\nonumber
\\[5pt]
&
\de(AB,NW)-a-b, 
\nonumber
\end{align}
respectively. Then we have the following characterization.

\begin{figure}[h]
%\vskip0.15in
  \centerline{
%\hfill
{\includegraphics[width=0.54\textwidth]{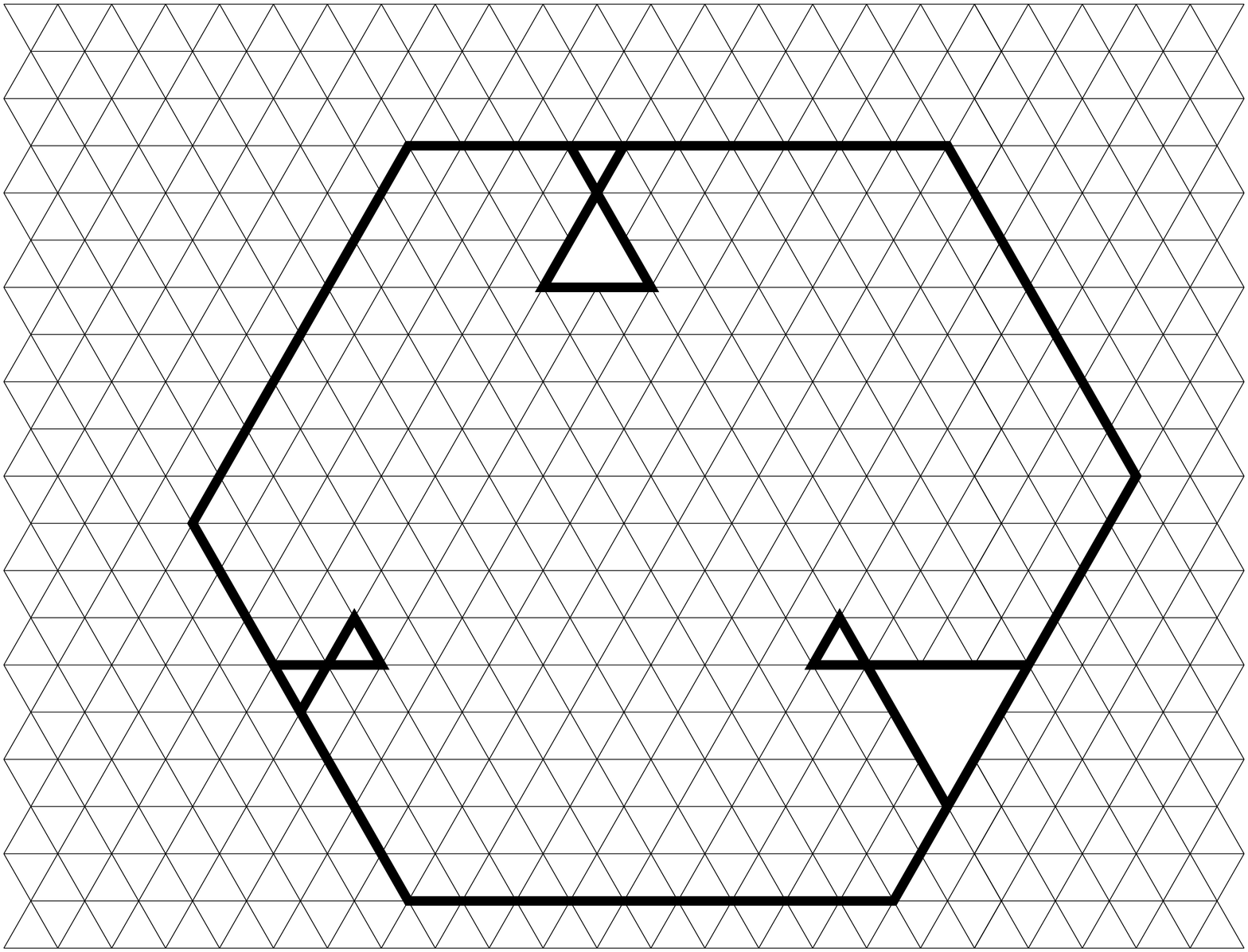}}
\hfill
{\includegraphics[width=0.42\textwidth]{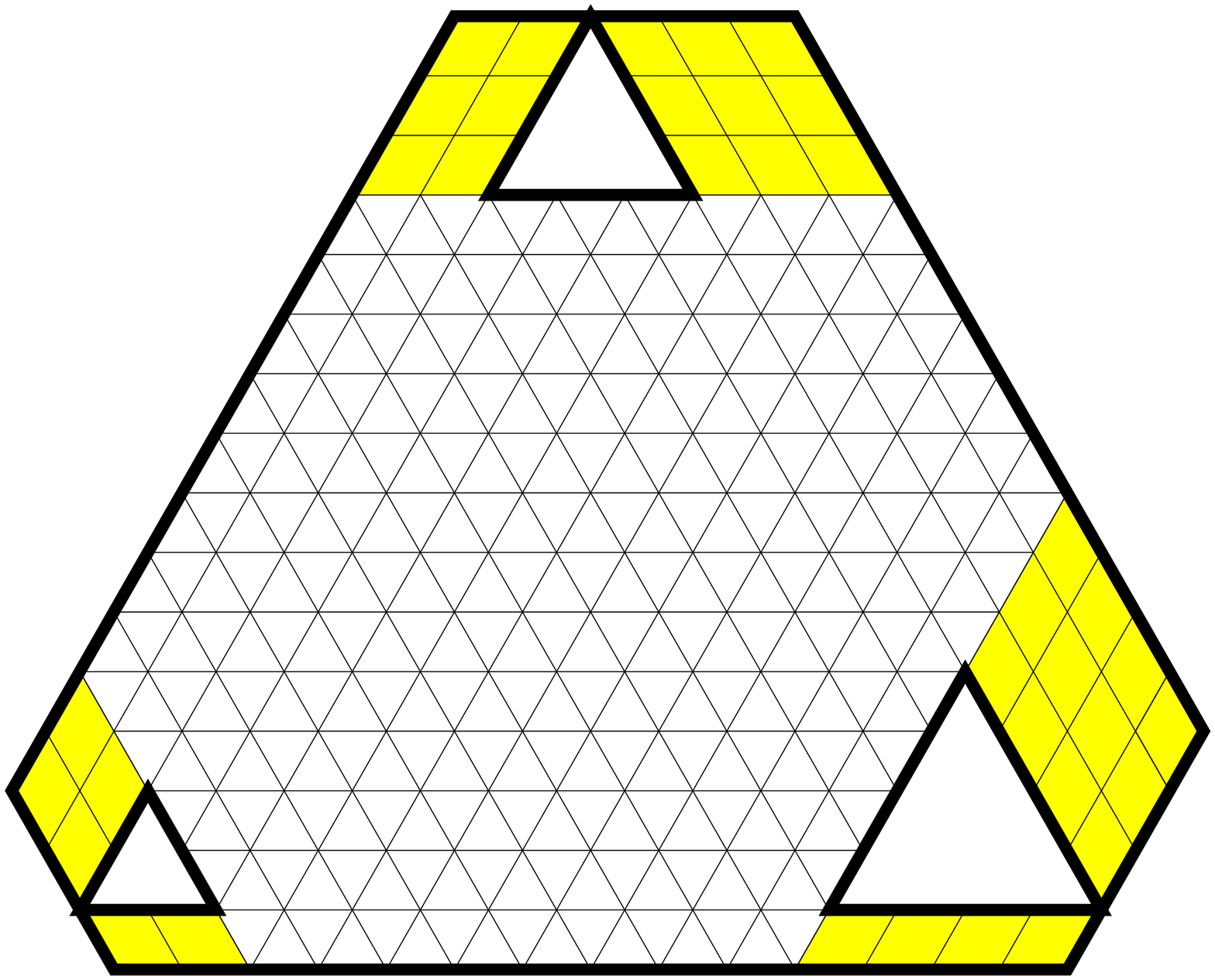}}
%\hfill
}
%\vskip-0.1in
  \caption{\label{fba} A triad hexagon $R$ with the bowties touching the edges (left); the corresponding region $\bar{R}$ with all bowties completely squeezed in (right).}
%\vskip-0.15in
\end{figure}

\begin{figure}[h]
%\vskip0.15in
\centerline{
%\hfill
{\includegraphics[width=0.33\textwidth]{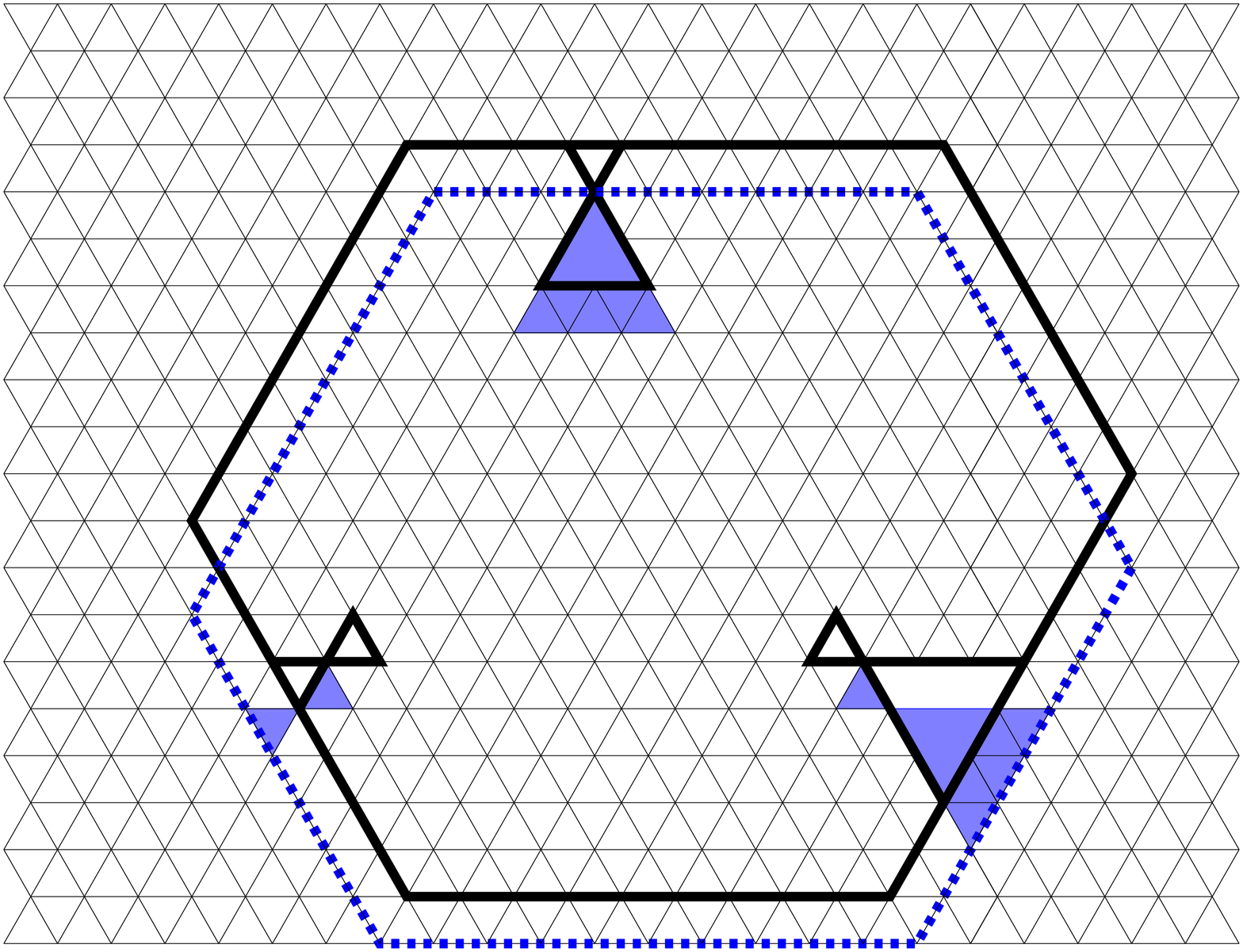}}
\hfill
{\includegraphics[width=0.33\textwidth]{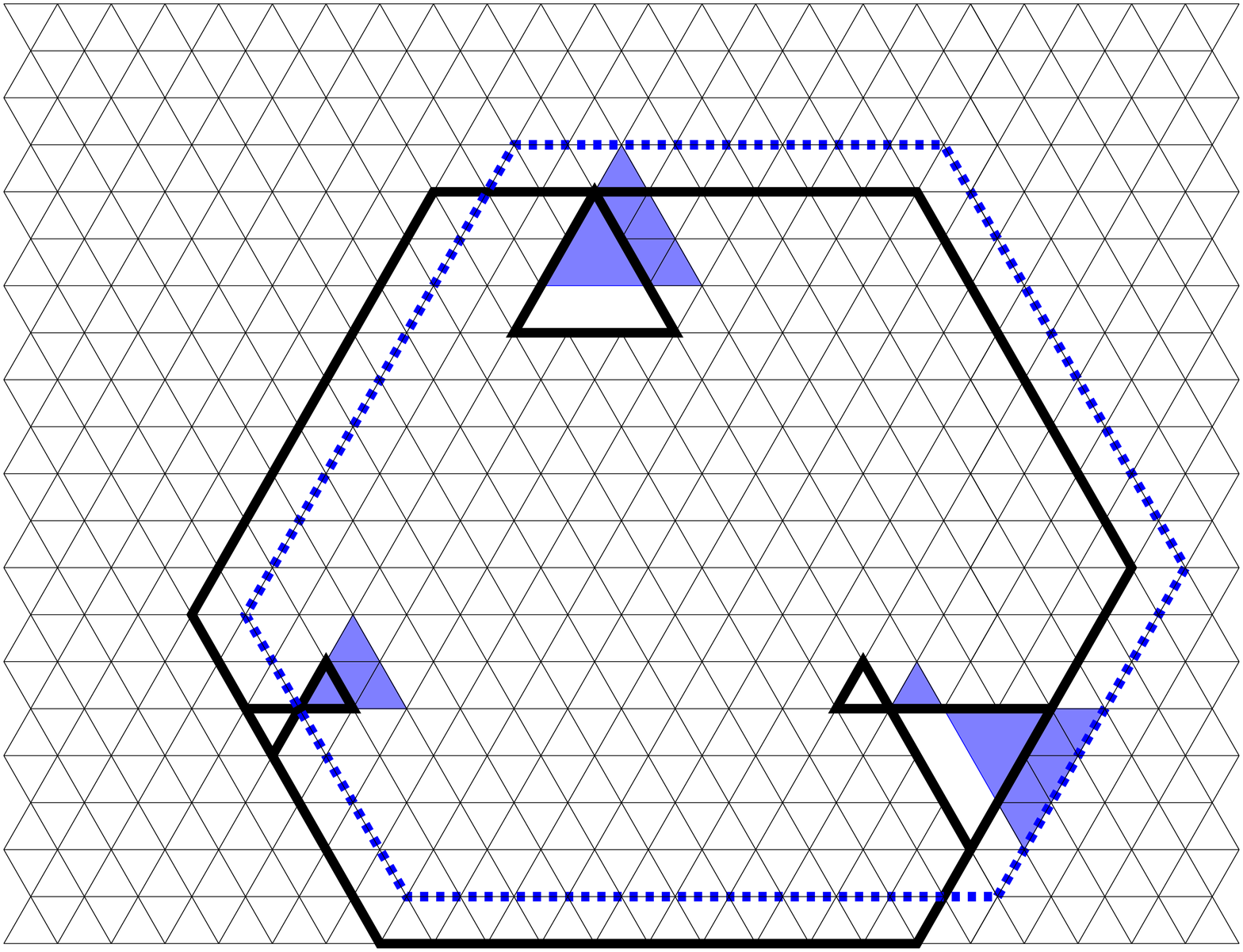}}
\hfill
{\includegraphics[width=0.33\textwidth]{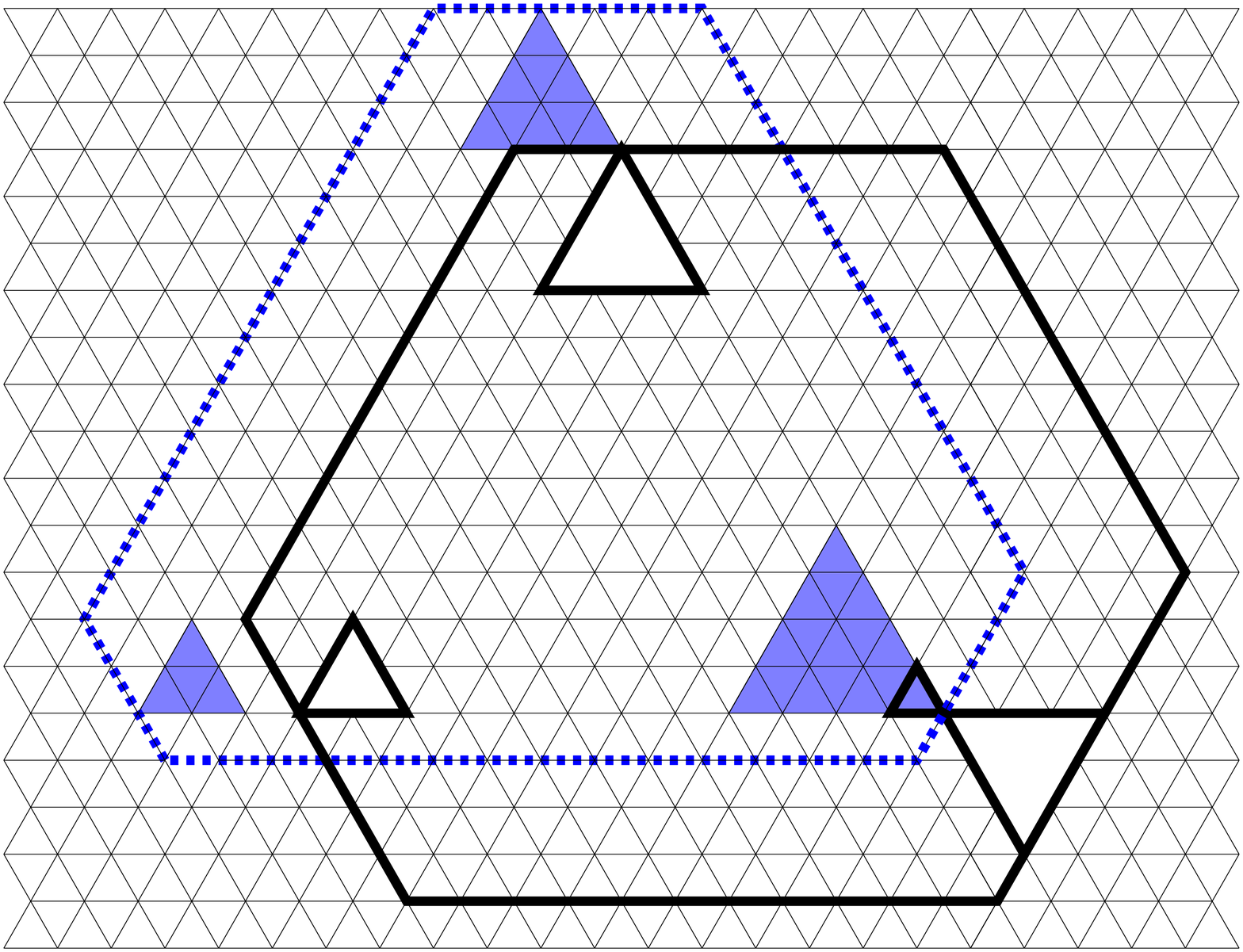}}
%\hfill
}
%\vskip-0.1in
  \caption{\label{fbb} Succesive squeezing in of the three bowties in Figure \ref{fba}.}
%\vskip-0.15in
\end{figure}

\begin{lem}
\label{tileability}
$($\emph{a}$)$. A triad hexagon is tileable if and only if its three depths are non-negative.

\ \ \ \ \ \ \ \ \ \ \ \ 
$($\emph{b}$)$. The depths of a triad hexagon are invariant under bowtie squeezing.

\ \ \ \ \ \ \ \ \ \ \ \ \ \,\!\! 
$($\emph{c}$)$. If a triad hexagon is tileable, any triad hexagon obtained from it by a sequence of bowtie squeezings is also tileable.

\end{lem}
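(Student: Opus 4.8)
The plan is to establish the three parts in the order (b), (a), (c). Part (b) is a geometric bookkeeping exercise, part (a) is the substance, and part (c) is a formal consequence of the other two: if $R$ is tileable then by (a) its three depths are non-negative; by (b) any triad hexagon $R'$ obtained from $R$ by a sequence of squeezings has the same three depths, hence still non-negative; so by (a) again $R'$ is tileable.

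For (b) I would use the three-fold rotational symmetry of a triad hexagon, which cyclically permutes $A,B,C$, the three pairs of opposite sides, and the triples $(a,b,c)$ and $(a',b',c')$; it therefore suffices to check that the three depths are unchanged when one squeezes out the $A$-bowtie by $d$ units. For this operation $b$ and $c$ are unchanged while $a\mapsto a+d$; the focal edge $BC$ is carried to its image under the homothety of ratio $(f-d)/f$ centered at $A$, so it stays horizontal and moves $d$ lattice rows toward the interior of $R$, matching the $d$ units by which the $S$ side is pulled in — hence $\de(BC,S)$, and so the $S$-depth $\de(BC,S)-b-c$, is unchanged. On the other hand $C$ slides toward the fixed point $A$ along the line $AC$, so the line carrying the focal edge $AC$ does not move, while the $NE$ side is pushed $d$ units outward; thus $\de(AC,NE)$ increases by $d$, and since $a+c$ also increases by $d$, the $NE$-depth is unchanged. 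The $NW$-depth is treated the same way. Since squeezing in is the inverse operation it preserves the three depths as well, hence so does any finite sequence of squeezings. The only subtlety is keeping the unit conventions consistent so that the displacements of the focal edges and of the hexagon sides are measured on the same scale.

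For (a) I would use the standard translation of lozenge tilings into families of non-intersecting lattice paths: a tiling of $R$ is the same as a family of mutually non-crossing lattice paths whose sources and sinks are the unit segments of the $S$ and $N$ sides together with the horizontal edges of the appropriate bowtie lobes, and which are forced to detour around the three holes. The forward implication (tileable $\Rightarrow$ all depths $\ge 0$) follows from a pigeonhole argument of exactly the flavor already used in the text to prove $f\ge a'+b'+c'$, which I would phrase as a \emph{cut} argument: cutting $R$ along the line supporting the focal edge $BC$, every down-triangle below the cut must be matched with an up-triangle below the cut (the straddling lozenges have their down-triangle above the cut), while the straddling lozenges account for the excess up-triangles; hence the part below the cut has at least as many up-triangles as down-triangles, and computing this imbalance — the two lobes removed below the cut are down-pointing of sides $b$ and $c$, contributing $b+c$ to it — yields exactly $\de(BC,S)-b-c\ge 0$. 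The $NE$- and $NW$-depth inequalities follow by symmetry.

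The converse — that non-negativity of the three depths is \emph{sufficient} for tileability — is the main obstacle. The point is to show that the three \emph{slab} obstructions recorded by the depths are the only possible ones, i.e.\ that Hall's condition for the bipartite matching between up- and down-triangles of $R$ can fail only on one of the three regions cut off by the three focal lines. I would do this constructively: assuming all three depths are $\ge 0$, route the lozenge paths issuing from the bowtie lobes greedily outward past the triad, observe that the cells left uncovered decompose into trapezoidal sub-regions each of which is a (possibly degenerate but still lozenge-tileable) piece precisely when the corresponding depth is $\ge 0$, and assemble the resulting tilings into a tiling of $R$; alternatively, one argues via an exchange property of deficient sets that a minimal Hall-violator must be one of the three slabs, reducing the claim to the imbalance computation of the previous paragraph. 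With both implications of (a) in hand, part (c) follows as noted above.
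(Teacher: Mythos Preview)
Your treatment of parts (b) and (c), and of the necessity direction of (a), is correct and in line with the paper (which is even terser on (b), saying only that it ``follows directly from the definition of bowtie squeezing'').

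The gap is in the sufficiency direction of (a). Neither of your two sketches is close to a proof. The greedy-routing idea does not specify what ``greedily'' means, nor why the leftover region decomposes into tileable trapezoids; with three holes arranged in a triangle, paths emanating from different lobes compete for the same corridors and there is no canonical routing that obviously works. The Hall's-condition alternative is no better: the assertion that a minimal Hall-violator must be one of the three focal slabs is essentially equivalent to the statement you are trying to prove, and ``an exchange property of deficient sets'' is not a mechanism for ruling out violators that straddle a focal line or wrap around the triad.

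The paper's argument for sufficiency is concrete and quite different. It first shrinks the hexagon by pushing pairs of adjacent sides inward as far as possible, reducing to one of three special configurations: (1) each bowtie touches the hexagon side it faces, (2) one of the three depths equals zero, or (3) $x=0$ or $z=0$. Case (3) feeds back into (1) via the factorization of Section~7. In cases (1) and (2) the paper builds an explicit tiling by laying down runs of lozenges of carefully chosen lengths straddling the three focal lines, then checking that the four leftover pieces (the interior of the focal triangle and the three outer slabs, each now dented) are balanced and hence tileable. Choosing the run-lengths amounts to solving a small linear system, and the paper verifies that a non-negative solution exists exactly when the depths are non-negative. This explicit construction is the missing ingredient in your sketch.
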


\parindent0pt
The main result of this paper is the following.

\parindent15pt
\begin{theo} 
\label{tba}
Let $R=R_{x,y,z}^{A,B,C}(a,b,c,a',b',c')$ be an arbitrary tileable triad hexagon, and let $Q=R_{x_1,y_1,z_1}^{A_1,B_1,C_1}(a_1,b_1,c_1,a_1',b_1',c_1')$ be a triad hexagon obtained from $R$ by a sequence of bowtie squeezings. The we have
\begin{equation}
\label{ebe}
\frac{\M(R)}{\M(Q)}=\dfrac
{\w^{(R)}\dfrac{\ka_A^{(R)}\ka_B^{(R)}\ka_C^{(R)}}{\ka_{BC}^{(R)}\ka_{AC}^{(R)}\ka_{AB}^{(R)}}}
{\w^{(Q)}\dfrac{\ka_{A_1}^{(Q)}\ka_{B_1}^{(Q)}\ka_{C_1}^{(Q)}}{\ka_{B_1C_1}^{(Q)}\ka_{A_1C_1}^{(Q)}\ka_{A_1B_1}^{(Q)}}},
\end{equation}
where the weights $\w$ and the couples $\ka$ are defined by equations \eqref{ebb}--\eqref{ebdc}.

\end{theo}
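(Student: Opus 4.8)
\medskip

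The plan is to reduce the general statement to the single elementary move of squeezing one bowtie out by one unit, and then to establish that one-unit identity by a Kuo-type graphical condensation argument. Since any $Q$ is obtained from $R$ by a sequence of bowtie squeezings, and since the right-hand side of \eqref{ebe} is manifestly multiplicative along such a sequence (the ratio for $R\to Q$ equals the ratio for $R\to R'$ times the ratio for $R'\to Q$), it suffices to prove \eqref{ebe} in the case where $Q$ is obtained from $R$ by squeezing out a single bowtie — say the $(a,a')$-bowtie at $A$ — by exactly one unit. By Lemma \ref{tileability}(c) both $R$ and $Q$ are then tileable, so both $\M(R)$ and $\M(Q)$ are positive and the ratio is well defined. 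For this single move one computes the right-hand side explicitly: the only hyperfactorial arguments that change are $f\to f-1$ in $\w$, the inner lobe $a'\to a'-1$, the outer lobe $a\to a+1$, and the six distances from the focal points/edges to the boundary that shift by one unit under step $(iii)$ of the squeezing definition. Using $\h(n)/\h(n-1)=(n-1)!$, the target ratio $\M(R)/\M(Q)$ collapses to an explicit ratio of products of factorials depending only on $f$, $a$, $a'$, and the six relevant boundary distances of $R$.

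\medskip

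The core of the argument is therefore to prove this explicit one-unit identity combinatorially. I would apply Kuo condensation (graphical vertex-splitting / the ``two-in, two-out'' recurrence for perfect matchings of planar bipartite graphs) to a suitably chosen auxiliary region that interpolates between $R$ and $Q$. Concretely: place four marked unit triangles — two up-pointing and two down-pointing — on the boundary of an enlarged host region, positioned so that the four Kuo subregions obtained by deleting pairs of these triangles are, respectively, $R$ itself, $Q$ itself, and two hybrid triad hexagons whose tiling counts are again governed by the same family of formulas (by symmetry and by already-handled smaller instances). Kuo's theorem then yields a bilinear relation $\M(R)\,\M(\text{hybrid}_3)=\M(Q)\,\M(\text{hybrid}_1)+\M(\text{hybrid}_2)\,\M(\text{hybrid}_4)$ — or a three-term variant — which, after dividing through and invoking the induction hypothesis for the hybrids, reduces to a pure factorial identity that can be checked directly. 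A parallel base case — for instance the fully squeezed-out region $\thickbar R$, or the ``three dents'' specialization of Figure \ref{fba} where Lai's formula \cite{Lai3dent} already supplies a closed form — anchors the induction.

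\medskip

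The main obstacle, and the step requiring the most care, is the choice of the four marked triangles: they must be placed so that \emph{all four} Kuo subregions are again triad hexagons (or degenerate limits thereof) lying in the same equivalence class, so that the induction hypothesis applies to each, and simultaneously so that the resulting factorial identity is the one we want. Getting the marked cells onto the correct boundary segments — two near the focal point $A$ and two near the opposite side, respecting the asymmetry introduced by keeping $A$ fixed while the other two bowties translate — is the delicate geometric bookkeeping. A secondary technical point is handling degenerate configurations, such as when $a'=1$ so the inner lobe disappears entirely, or when a bowtie touches the boundary of the hexagon: in these cases some Kuo subregions degenerate and one must either shrink the relevant parameter to zero in the formula (which the hyperfactorial normalization $\h(0)=1$ accommodates cleanly) or treat them as separate small base cases. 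Once the condensation recurrence and the boundary placements are pinned down, verifying that the factorial identity matches the explicit right-hand side computed in the first paragraph is a routine, if lengthy, manipulation of ratios of factorials.
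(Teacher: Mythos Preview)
Your reduction to elementary moves is sound, but the core step --- the proposed Kuo condensation --- has a genuine gap. You want a host region whose four Kuo subregions include both $R$ and $Q$, where $Q$ is obtained from $R$ by a one-unit squeeze of the $(a,a')$-bowtie. But a one-unit squeeze does far more than delete or add a few boundary unit triangles: it changes the sizes of the $a$- and $a'$-lobes, \emph{translates} the other two bowties toward $A$, and pushes three sides of the outer hexagon out while pulling three others in. There is no evident common host from which removing two unit triangles yields $R$ and removing two others yields $Q$; the bowtie holes themselves sit in different places. You flag this as ``the main obstacle'' and leave it unresolved, and I do not see a way to make it work as described. Your induction scheme is also underspecified: you never say what quantity decreases for the hybrids.

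The paper proceeds quite differently. Rather than compare $R$ to a neighbor $Q$, it compares every $R$ to the canonical representative $\thickbar{R}$ (all inner lobes squeezed to zero); since $\thickbar{Q}=\thickbar{R}$, equation \eqref{ebe} follows by division. The key identity $\M(R)/\M(\thickbar{R})=\cdots$ is then proved by induction on $x+y+z$ with the \emph{triad held fixed}: Kuo condensation is applied to $R$ with the four marked triangles at hexagon corners, so all six regions in the resulting recurrence \eqref{ece} are triad hexagons with the \emph{same} bowties and focal points but smaller $x+y+z$. Because the triad is unchanged, the weights $\w$ and the couple-products $\ka$ pair off identically across the three terms, and the recurrence for the claimed formula collapses to the Kuo recurrence for the $\thickbar{R}$-regions themselves --- which holds automatically. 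The price of this clean induction step is a substantial tower of base cases (hourglass, based hourglass, sphinx, and $x=0$ regions, Sections 4--7), each resting on known product formulas for magnet-bar and snowman regions. Your sketch anticipates none of this structure.
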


%\begin{figure}[h]
%%\vskip0.15in
%  \centerline{
%%\hfill
%{\includegraphics[width=0.54\textwidth]{3dent.eps}}
%\hfill
%{\includegraphics[width=0.42\textwidth]{3dentd.eps}}
%%\hfill
%}
%%\vskip-0.1in
%  \caption{\label{fba} A triad hexagon $R$ with the bowties touching the edges (left); the correspondin%g region $\bar{R}$ with all bowties completely squeezed in (right).}
%%\vskip-0.15in
%\end{figure}
%
%\begin{figure}[h]
%%\vskip0.15in
%\centerline{
%%\hfill
%{\includegraphics[width=0.33\textwidth]{3denta.eps}}
%\hfill
%{\includegraphics[width=0.33\textwidth]{3dentb.eps}}
%\hfill
%{\includegraphics[width=0.33\textwidth]{3dentc.eps}}
%%\hfill
%}
%%\vskip-0.1in
%  \caption{\label{fbb} Succesive squeezing in of the three bowties in Figure \ref{fba}.}
%%\vskip-0.15in
%\end{figure}

\parindent0pt
\textsc{Remark 1.} Consider the special case when $R$ is a triad hexagon in which the bowties touch the northern, southwestern and southeastern sides of the boundary (see the left picture in Figure~\ref{fba} for an illustration). Let $\thickbar{R}$ be the region obtained from $R$ by completey squeezing in all three bowties. Our definition of the bowtie squeezing operation implies that $\thickbar{R}$ is a hexagon with three triangular holes touching with one of their vertices alternate sides of the boundary (if $R$ is the region on the left in Figure \ref{fba}, $\thickbar{R}$ is pictured on the right in the same figure). The details of the construction are shown in Figure \ref{fbb}. After removing from $\thickbar{R}$ all the lozenges that are forced to be part of each of its tilings, the leftover region is a centrally symmetric hexagon, whose number of tilings is given by MacMahon's formula \eqref{eaa}. Therefore equation \eqref{ebe} yields a product formula for $\M(R)$. This gives the following equivalent form of Lai's earlier result \cite{Lai3dent}.

\begin{theo} \cite[Theorem 1.1]{Lai3dent}
\label{3btouching}
Let $T=T_{x,y,z}^{A,B,C}(a,b,c,a',b',c')$ be the region obtained from the hexagon $H$ of side-lengths $x+a+b+c$, $y+a'+b'+c'$, $z+a+b+c$, $x+a'+b'+c'$, $y+a+b+c$, $z+a'+b'+c'$ $($clockwise from top$)$ by removing bowties of types $(a,a')$, $(b,b')$ and $(c,c')$ from along the northern, southwestern and southeastern sides of $H$, with focal points at $A$, $B$ and $C$, respectively $($see the picture on the left in Figure $\ref{fba}$$)$. Then
\begin{equation}
\M(T_{x,y,z}^{A,B,C}(a,b,c,a',b',c'))=P(x+a+a',y+b+b',z+c+c')
\dfrac
{\w^{(T)}\dfrac{\ka_A^{(T)}\ka_B^{(T)}\ka_C^{(T)}}{\ka_{BC}^{(T)}\ka_{AC}^{(T)}\ka_{AB}^{(T)}}}
{\w^{(T_1)}\dfrac{\ka_{A_1}^{(T_1)}\ka_{B_1}^{(T_1)}\ka_{C_1}^{(T_1)}}{\ka_{B_1C_1}^{(T_1)}\ka_{A_1C_1}^{(T_1)}\ka_{A_1B_1}^{(T_1)}}},
\end{equation}
where $T_1$ is the triad hexagon obtained from $T$ by completely squeezing in the three bowties $($see the picture on the right in Figure $\ref{fbc}$$)$, and $A_1$, $B_1$ and $C_1$ are its top, left and right focal points, respectively.

\end{theo}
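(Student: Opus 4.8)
The plan is to derive Theorem \ref{3btouching} as a direct specialization of Theorem \ref{tba}. First I would observe that the region $T=T_{x,y,z}^{A,B,C}(a,b,c,a',b',c')$ is precisely a triad hexagon of the type described in Remark 1: the three bowties touch the northern, southwestern, and southeastern sides of the boundary, so in the notation of Theorem \ref{tba} we may take $R:=T$. Since $T$ is assumed to arise from a genuine hexagon with a removed triad (so that the balancing condition on the side-lengths holds), Lemma \ref{tileability} guarantees that $T$ is tileable exactly when its three depths are non-negative, and in that case every triad hexagon obtained from $T$ by squeezing is again tileable; in particular $T_1$, the result of completely squeezing in all three bowties, is a tileable triad hexagon, and Theorem \ref{tba} applies with $Q:=T_1$.

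Next I would identify $T_1$ explicitly. Completely squeezing in the $(a,a')$-bowtie means squeezing it in $a$ units, which replaces it by a $(0,a'+a)$-bowtie, i.e.\ a single up-pointing triangle of side $a+a'$ with its apex a vertex of $T_1$ on the boundary; similarly the other two bowties become single triangles of sides $b+b'$ and $c+c'$. Thus $T_1$ is a hexagon with three triangular dents (one per alternate side). The key computational point is that $T_1$ has a large number of forced lozenges: following the standard forcing argument from the horizontal edges of the three dents, one removes a forced region and is left with a centrally symmetric hexagon whose side-lengths, read off from the squeezing construction (each squeeze-in pushes in the top three sides and pulls out the bottom three sides by $a$, $b$, $c$ respectively), are $x+a+a'$, $y+b+b'$, $z+c+c'$ in the three directions. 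By the David--Tomei correspondence and MacMahon's formula \eqref{eaa}, this gives
\begin{equation}
\M(T_1)=P(x+a+a',\,y+b+b',\,z+c+c').
\nonumber
\end{equation}
Substituting $\M(R)=\M(T)$ and $\M(Q)=\M(T_1)=P(x+a+a',y+b+b',z+c+c')$ into \eqref{ebe} and solving for $\M(T)$ yields exactly the asserted identity, with $T_1$ playing the role of $Q$ and $A_1$, $B_1$, $C_1$ its top, left, and right focal points.

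The only genuine work is the forced-lozenge reduction of $T_1$: one must verify carefully that removing the dents' forced lozenges leaves precisely a hexagon of sides $x+a+a'$, $y+b+b'$, $z+c+c'$ (in the appropriate cyclic order), which amounts to tracking how the three squeeze-in operations modify the six side-lengths of the original hexagon $H$ and how the three dents, now flush against the boundary, absorb corner regions. This bookkeeping is routine but is where sign/orientation errors are most likely; everything else is a direct substitution into \eqref{ebe}. I would also note that the non-negativity of the three depths of $T$ — equivalently, by Lemma \ref{tileability}(a), the tileability of $T$ — is the precise hypothesis under which both sides of the claimed formula make sense, so no separate hypothesis is needed in the statement of Theorem \ref{3btouching}.
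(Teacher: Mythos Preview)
Your proposal is correct and follows exactly the paper's own derivation in Remark 1: apply Theorem \ref{tba} with $R=T$ and $Q=T_1$ (the region obtained by completely squeezing in all three bowties), then use the forced-lozenge reduction of $T_1$ to a centrally symmetric hexagon together with MacMahon's formula \eqref{eaa} to evaluate $\M(T_1)=P(x+a+a',y+b+b',z+c+c')$. Note, as the paper does by citing \cite{Lai3dent}, that this is a recovery of a known special case rather than an independent proof, since the proof of Theorem \ref{tba} itself invokes Theorem \ref{3btouching} in the base cases of Propositions \ref{basedhg} and \ref{sphinx}.
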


\textsc{Remark 2.} Another interesting special case is when the bottom two bowties consist just of their outer lobes (i.e. their inner lobes are empty), and they touch the corners of the inner lobe of the top bowtie (see Figure \ref{fbc} for an example). Let $R$ be such a region, and let $Q$ be the region obtained from $R$ by completely squeezing out the top bowtie (if $R$ is as pictured on the left in Figure \ref{fbc}, the resulting region $Q$ is illustrated in the same figure on the right). Then the bowties in $Q$ consist of single down-pointing lobes, sharing a common vertex. The shaded lozenges indicated in Figure \ref{fbc} are forced. Upon their removal, the leftover region is a hexagon with an equilateral triangle removed from its center (see \cite{CEKZ} for the precise definition of what this central position means). Since  equation \eqref{ebe} holds, and the lozenge tilings of $Q$ (being a hexagon with an equilateral triangle removed from its center) are enumerated by Theorems 1 
%\begin{figure}[h]
%%\vskip0.15in
%  \centerline{
%%\hfill
%{\includegraphics[width=0.44\textwidth]{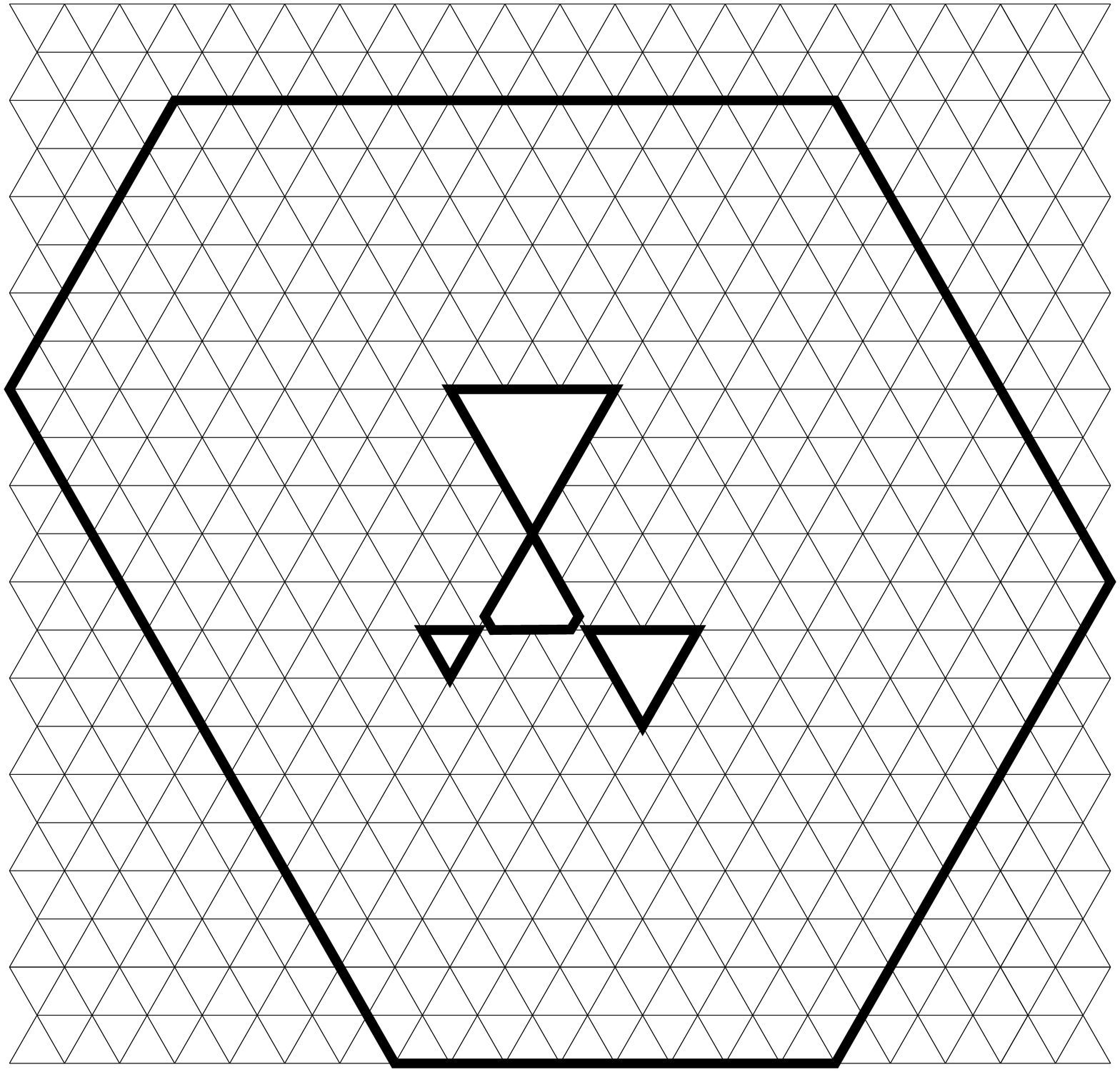}}
%\hfill
%{\includegraphics[width=0.44\textwidth]{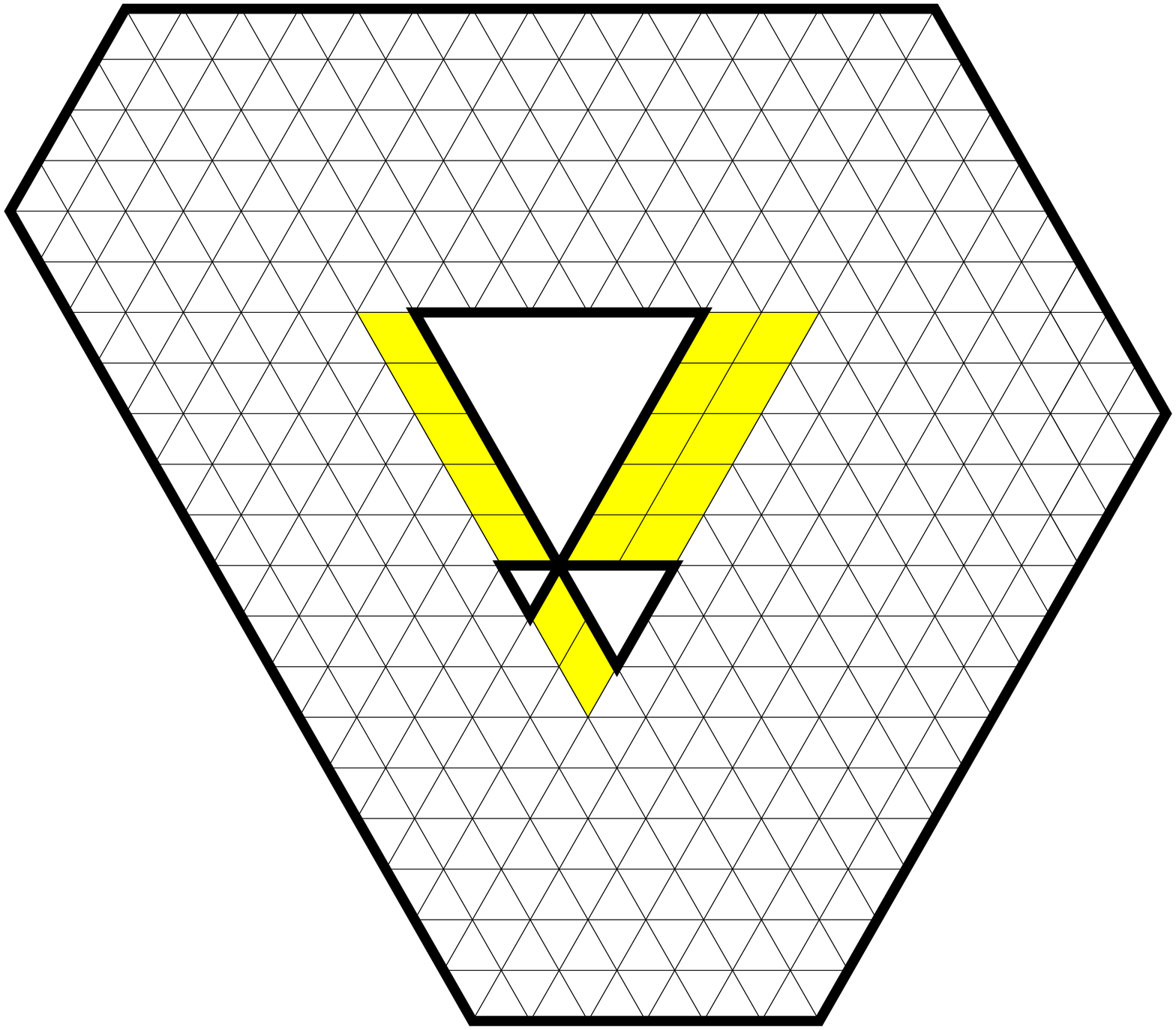}}
%%\hfill
%}
%%\vskip-0.1in
%  \caption{\label{fbc} A triad hexagon $R$ with three bowties forming a shamrock (left); the correspond%ing region $Q$ with the top bowtie completely squeezed out (right).}
%%\vskip-0.15in
%\end{figure}
%
%\begin{figure}[h]
%%\vskip0.15in
%  \centerline{
%\hfill
%{\includegraphics[width=0.44\textwidth]{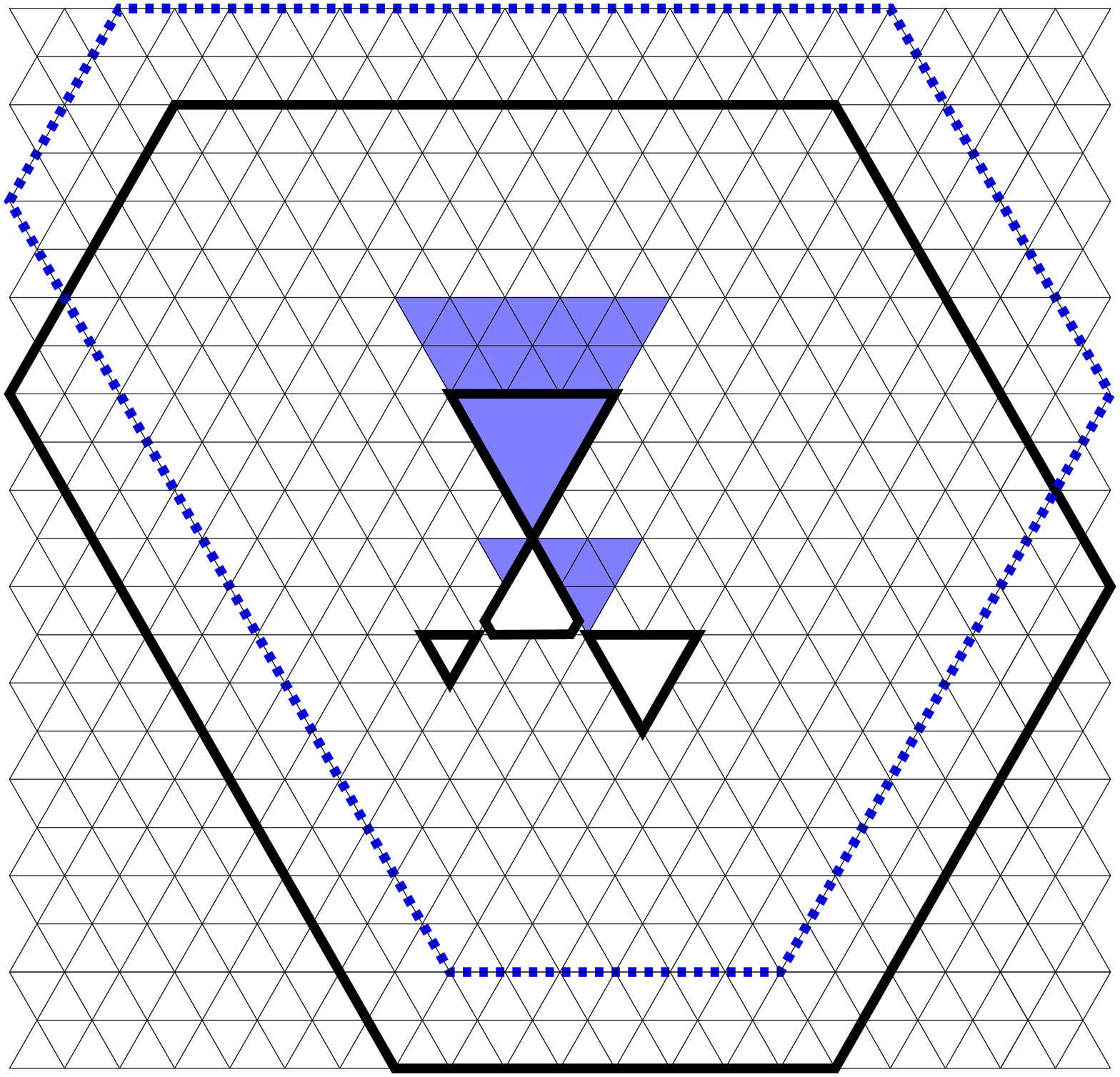}}
%\hfill
%}
%%\vskip-0.1in
%  \caption{\label{fbd} Illustration of the bowtie squeezing that turns the left region into the right r%egion in Figure \ref{fbc}.}
%%\vskip-0.15in
%\end{figure}
and 2 in \cite{CEKZ}, we obtain a simple product formula for $\M(R)$. This yields Ciucu and Krattenthaler's earlier result~\cite{ff}.

\parindent15pt

\section{Two known special cases}

In this section we present formulas that give the number of lozenge tilings of two families of regions, both special cases of triad hexagons. We will use these formulas in our proof of Theorem \ref{tba}. Both results are known from the literature. However, the form of the formulas is new --- it is tailored to make our calculations in the proof of Theorem \ref{tba} easier.

The first family of regions, called {\it magnet bar regions}, was introduced in \cite{ff}. The picture on the left in Figure \ref{fea} describes the magnet bar region $I_{x,y}(a,b,c,m)$.

Note that $I_{x,y}(a,b,c,m)$ is a special case of a triad hexagon, with the focal points $A$, $B$ and $C$ being the top, left and right vertices of the triangular dent of side $m$ along the base, and bowties of types $(c,m)$, $(0,0)$ and $(0,0)$, respectively.

The following result was proved in \cite{ff}\footnote{In \cite{ff} we denoted these regions by the letter $B$; to avoid confusion with the focal point $B$, we use here the letter $I$ instead.}.

\begin{figure}[h]


%\vskip0.15in
  \centerline{
%\hfill
{\includegraphics[width=0.44\textwidth]{shamrock.eps}}
\hfill
{\includegraphics[width=0.44\textwidth]{shamrockc.eps}}
%\hfill
}
%\vskip-0.1in
  \caption{\label{fbc} A triad hexagon $R$ with three bowties forming a shamrock (left); the corresponding region $Q$ with the top bowtie completely squeezed out (right).}
%\vskip-0.15in
\end{figure}

\begin{figure}[h]
%\vskip0.15in
  \centerline{
\hfill
{\includegraphics[width=0.44\textwidth]{}}
\hfill
}
%\vskip-0.1in
  \caption{\label{fbd} Illustration of the bowtie squeezing that turns the left region into the right region in Figure \ref{fbc}.}
%\vskip-0.15in
\end{figure}

\begin{figure}[h]
%\vskip0.15in
  \centerline{
\hfill
{\includegraphics[width=0.51\textwidth]{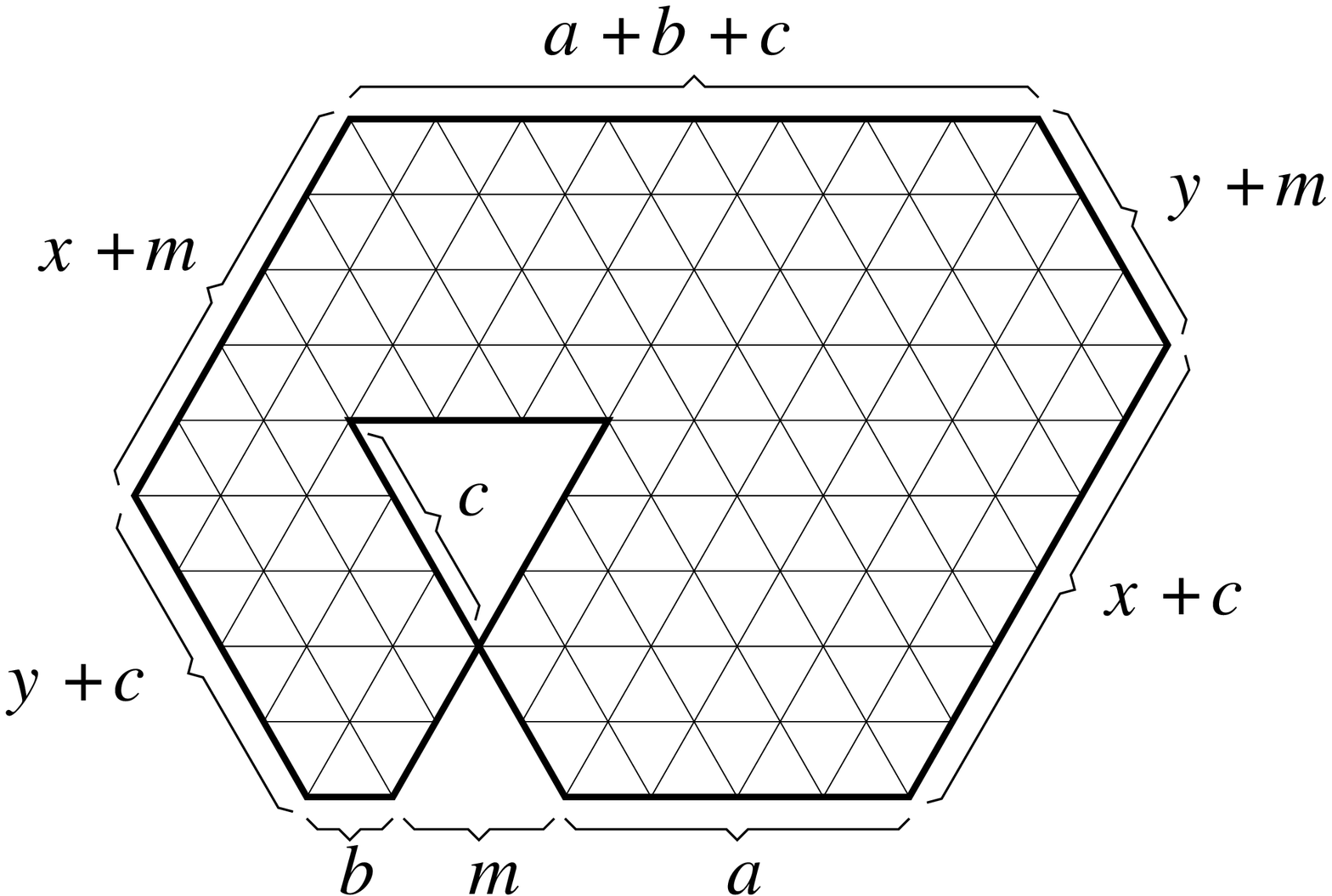}}
\hfill
{\includegraphics[width=0.42\textwidth]{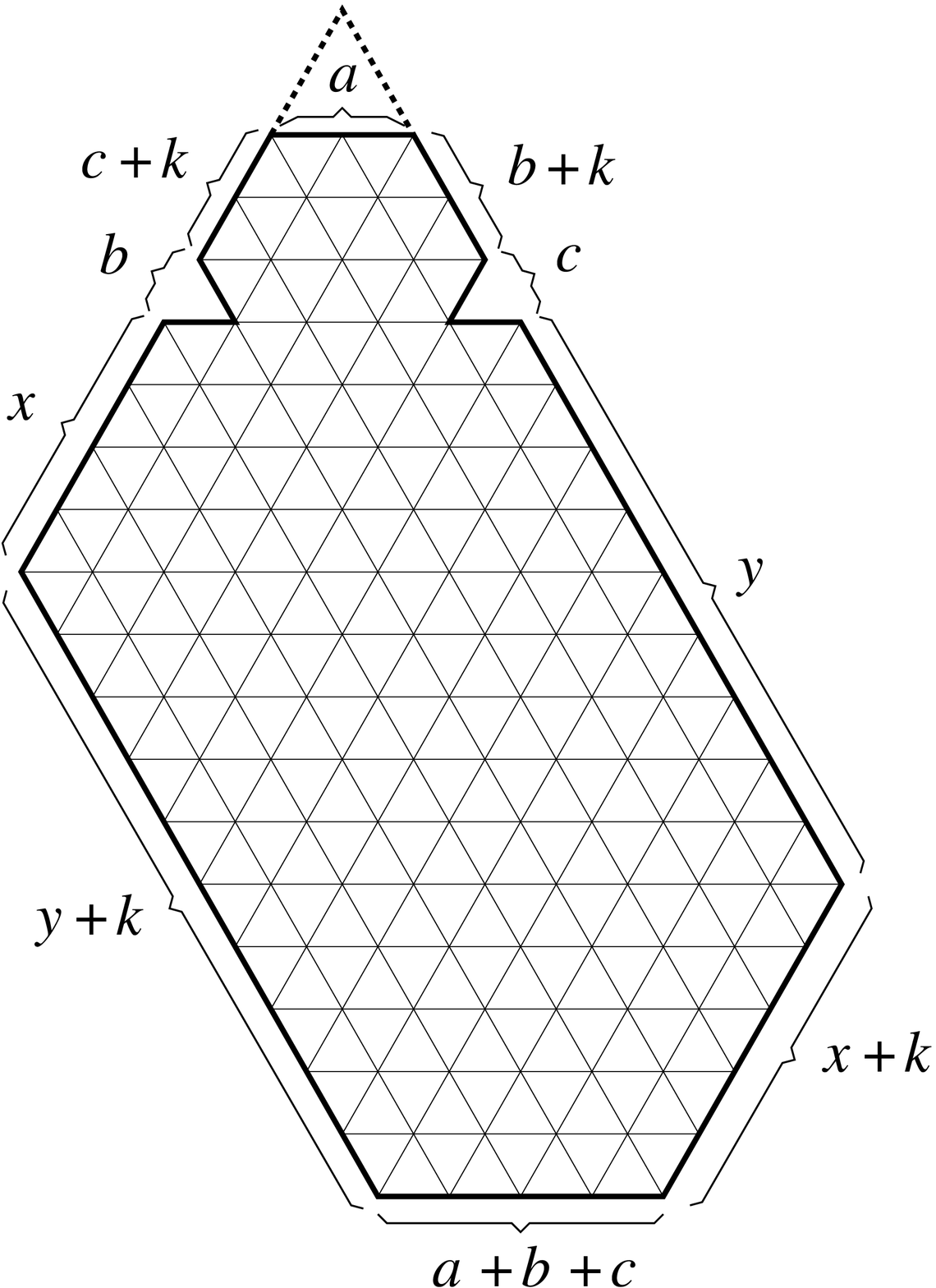}}
\hfill
}
%\vskip-0.1in
  \caption{\label{fea} The magnet bar region $I_{x,y}(a,b,c,m)$ for $a=4$, $b=1$, $c=3$, $m=2$, $x=3$ and $y=1$ (left) and the snowman region $S_{x,y}(a,b,c,k)$ for $a=2$, $b=1$, $c=1$, $k=1$, $x=4$ and $y=9$ (right).}
%\vskip-0.15in
\end{figure}

\begin{theo} \cite[Theorem 3.1]{ff}
\label{tea}
For non-negative integers $x,y,a,b,c,m$, the number of lozenge tilings of the region $I=I_{x,y}(a,b,c,m)$ is given by
\begin{equation}
\M(I_{x,y}(a,b,c,m))  
=
{\w^{(I)}\dfrac{\ka_A^{(I)}\ka_B^{(I)}\ka_C^{(I)}}{\ka_{BC}^{(I)}\ka_{AC}^{(I)}\ka_{AB}^{(I)}}}
\,P(x,y,a+b+c+m),
\label{eea}
\end{equation}
where the weight $\w$ and the couples $\ka$ are given by \eqref{ebb}--\eqref{ebdc} $($\!with $I$ viewed as a triad hexagon with the focal points $A$, $B$ and $C$ being the top, left and right vertices of the triangular dent of side $m$ along the base, and bowties of types $(c,m)$, $(0,0)$ and $(0,0)$, respectively$)$, and $P$ is given by \eqref{eaa}.
%\epf

\end{theo}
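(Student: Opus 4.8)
The plan is to reduce the magnet bar region $I_{x,y}(a,b,c,m)$ to a simpler region whose tilings are already enumerated, by stripping off the lozenges that every tiling is forced to contain, and then to verify that the prefactor produced by this reduction matches the claimed product of the weight $\w^{(I)}$ and the couples $\ka^{(I)}$. First I would analyze the forced region near the down-pointing lobe of side $c$ and the up-pointing dent of side $m$ that sit, respectively, along the top and bottom of the picture: because the two lower ``bowties'' are of type $(0,0)$, they contribute nothing, so the only obstructions in $I$ are the $(c,m)$-bowtie near the top and the geometry of the outer hexagon. Following the paths of lozenges emanating from the horizontal edges of the $c$-lobe and the $m$-dent, as in the tileability discussion in the Introduction, one sees that a nested family of lozenges is forced along three of the sides; removing all of them leaves a hexagonal region whose side-lengths are $x$, $y$, $a+b+c+m$ in two triples, which by MacMahon's formula \eqref{eaa} has exactly $P(x,y,a+b+c+m)$ tilings. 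This explains the factor $P(x,y,a+b+c+m)$ in \eqref{eea}.

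The second and more delicate step is bookkeeping: I must show that the number of forced lozenges, together with any hyperfactorial-type normalization implicit in the way \cite[Theorem 3.1]{ff} was originally stated, reorganizes exactly into
$\w^{(I)}\,\ka_A^{(I)}\ka_B^{(I)}\ka_C^{(I)}/(\ka_{BC}^{(I)}\ka_{AC}^{(I)}\ka_{AB}^{(I)})$.
Here I would substitute into \eqref{ebb}--\eqref{ebdc} the specialized parameters for $I$ viewed as a triad hexagon --- focal distance $f=m$ (the side of the bottom dent), bowtie types $(c,m)$, $(0,0)$, $(0,0)$, and the distances $\de(A,N)$, $\de(A,S)$, $\de(B,NE)$, etc., read off directly from the left picture in Figure \ref{fea} in terms of $x$, $y$, $a$, $b$, $c$, $m$. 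Using $\h(0)=1$ and the many vanishing lobe sizes, most of the factors in $\w$ and in the couples collapse, and what remains should be a ratio of hyperfactorials that I can match term-by-term against the closed form in \cite{ff} after rewriting the latter using the identity $\h(n)=\prod_{k=1}^{n-1}k!$ and the standard manipulation $\binom{n}{k}$-products $\leftrightarrow$ hyperfactorial quotients.

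The main obstacle I anticipate is purely this translation of notation: the formula in \cite{ff} was, by the authors' own admission, ``less structured and more complicated,'' so the bulk of the work is an exercise in algebraic identities among hyperfactorials --- showing that the original product, the MacMahon factor $P(x,y,a+b+c+m)$, and the new weight-and-couples expression differ by nothing. I would organize this as a single lemma-free computation: express everything over a common product of the form $\prod \h(\cdot)$, cancel, and confirm equality. There is no combinatorial difficulty beyond the forced-lozenge argument of the first paragraph; the risk lies entirely in off-by-one errors in the distances $\de(\cdot,\cdot)$, so I would pin those down carefully against Figure \ref{fea} before doing any cancellation.
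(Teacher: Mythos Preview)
Your proposal has a genuine gap in the first step. The magnet bar region $I_{x,y}(a,b,c,m)$ does \emph{not} reduce to a plain hexagon by stripping forced lozenges. The $m$-lobe of the $(c,m)$-bowtie sits along the base as a dent, but the $c$-lobe is a down-pointing triangular hole strictly inside the region (its apex is the focal point $A$, well above the base). The paths of lozenges leaving the horizontal edge of the $c$-lobe are not forced: they can meander in many ways before reaching the top side, exactly as in a generic cored hexagon. Consequently no amount of peeling off forced lozenges produces the hexagon $H_{x,y,a+b+c+m}$. A quick sanity check: if your reduction were correct, the prefactor $\w^{(I)}\ka_A^{(I)}\ka_B^{(I)}\ka_C^{(I)}/(\ka_{BC}^{(I)}\ka_{AC}^{(I)}\ka_{AB}^{(I)})$ would have to equal $1$, since forced lozenges never change a tiling count; but a direct substitution of the parameters (focal distance $f=m$, bowtie types $(c,m),(0,0),(0,0)$) into \eqref{ebb}--\eqref{ebdc} shows it is not.

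As for the paper itself: it does not prove Theorem~\ref{tea} at all. The result is quoted verbatim from \cite[Theorem~3.1]{ff}, where it was established by an independent argument (Kuo condensation together with a determinant/product evaluation), and here it is merely restated in the weight-and-couples notation. So the only ``proof'' task the present paper undertakes is your second paragraph --- checking that the closed form in \cite{ff} coincides with the expression on the right of \eqref{eea} after substituting the specialized parameters. That bookkeeping is straightforward and your plan for it is fine. But if you intend to supply a self-contained proof of the theorem, you will need to replace the forced-lozenge argument with a genuine enumeration of hexagons with an off-center triangular hole (or equivalently a single-bowtie hole touching one side), for which the natural route is the Kuo-condensation induction used in \cite{ff}.
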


The second family consists of the {\it snowman regions} $S_{x,y}(a,b,c,k)$ described on the right in Figure \ref{fea}. The region itself is determined by the thick solid line contour. The thick dotted lines on top indicate how $S_{x,y}(a,b,c,k)$ can be viewed as a triad hexagon: The focal points $A$, $B$, $C$ are the top of the triagle of side $a$, the left vertex of the triangular dent of side $b$, and the right vertex of the triangular dent of side $c$, and the corresponding bowties are of type $(0,a)$, $(0,b)$ and $(0,c)$, respectively\footnote{ The top side of this triad hexagon has length zero.}.

The following is a special case of Theorem 2.1 of \cite{2i}. The case $x=y$, $b=c$ is an earlier result of Rohatgi (see \cite{Rohatgi2dent}). Again, the form of the formula is new, adapted for our use of it in the proof of Theorem \ref{tba}.

%\begin{figure}[h]
%%\vskip0.15in
%  \centerline{
%\hfill
%{\includegraphics[width=0.44\textwidth]{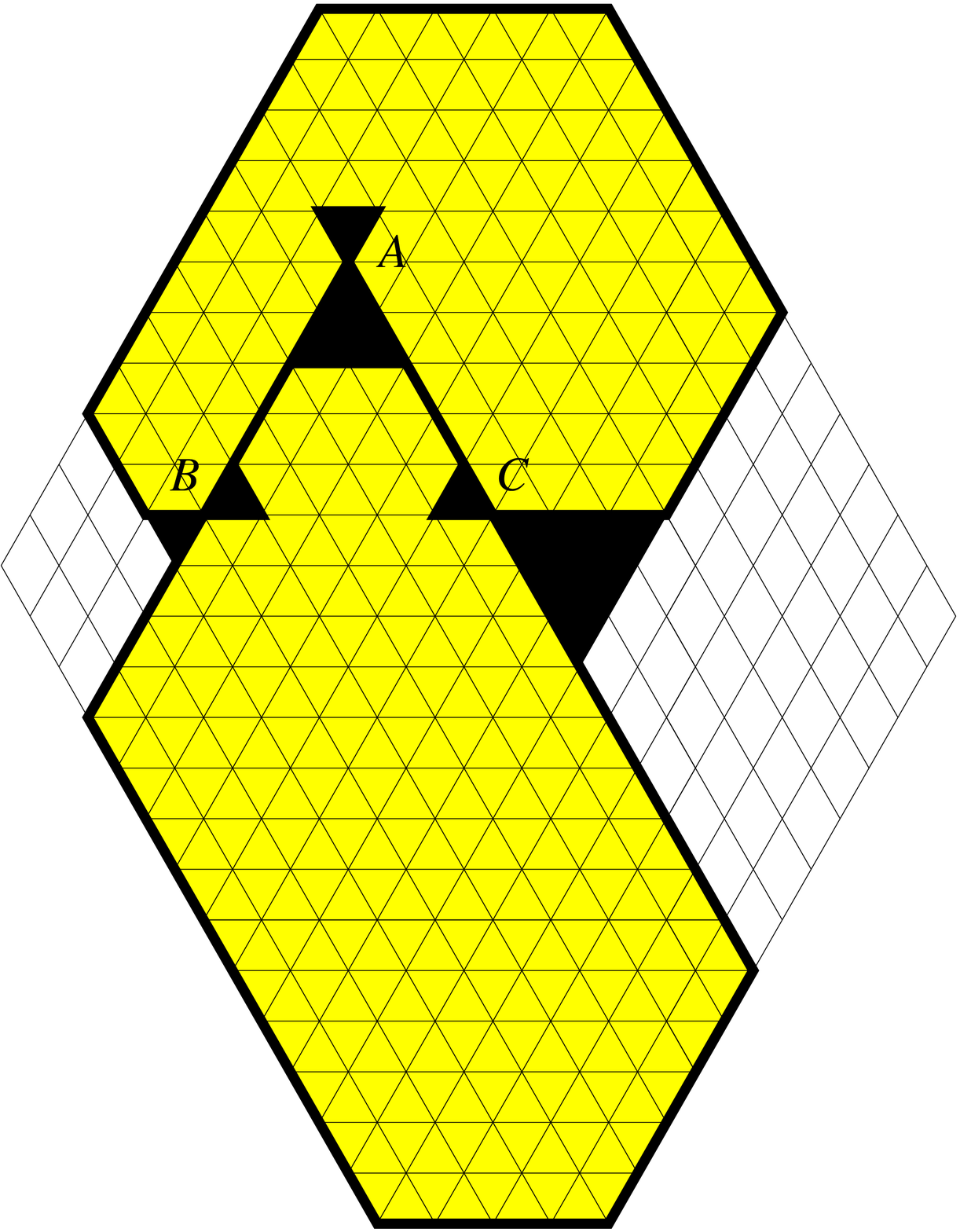}}
%\hfill
%{\includegraphics[width=0.44\textwidth]{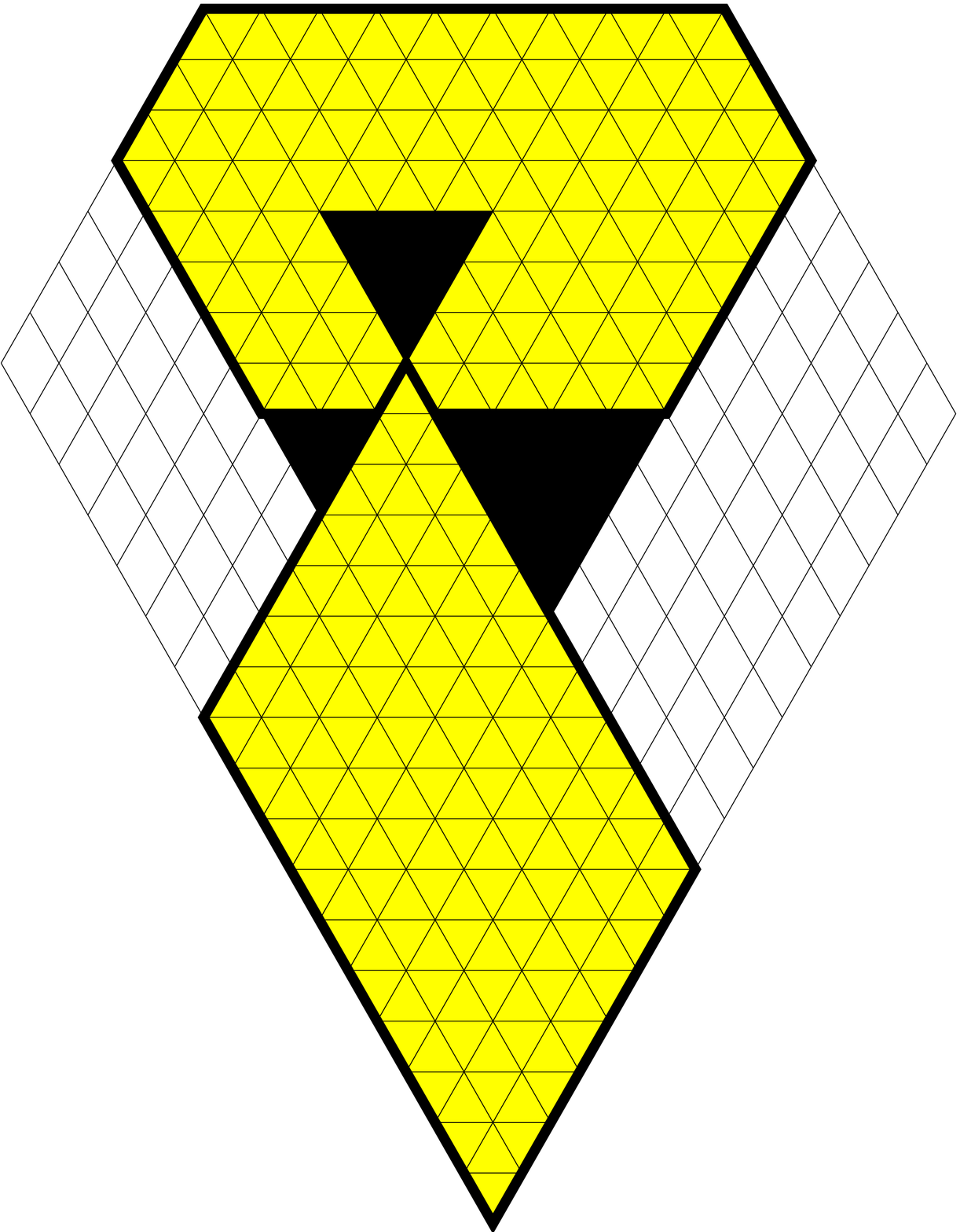}}
%\hfill
%}
%%\vskip-0.1in
%  \caption{\label{ffa} An example of a triad hexagon $R$ with $x=0$ (left) and the corresponding region% $\bar{R}$ (right).}
%%\vskip-0.15in
%\end{figure}

\begin{theo}
\label{teb}
For non-negative integers $x,y,a,b,c,k$, the number of lozenge tilings of the region $S=S_{x,y}(a,b,c,k)$ is given by
\begin{equation}
\M(S_{x,y}(a,b,c,k))  
=
{\w^{(S)}\dfrac{\ka_A^{(S)}\ka_B^{(S)}\ka_C^{(S)}}{\ka_{BC}^{(S)}\ka_{AC}^{(S)}\ka_{AB}^{(S)}}}
\,P'(x+b+k,y+c+k,k),
\label{eeb}
\end{equation}
where the weight $\w$ and the couples $\ka$ are given by \eqref{ebb}--\eqref{ebdc} $($\!with $S$ viewed as a triad hexagon with the focal points $A$, $B$ and $C$ being the top of the triagle of side $a$, the left vertex of the triangular dent of side $b$, and the right vertex of the triangular dent of side $c$, and bowties of type $(0,a)$, $(0,b)$ and $(0,c)$, respectively$)$, and $P'$ is given by
\begin{equation}
P'(x,y,z)=\frac{\h(x)\h(y)\h(z)\h(x+y-z)}{\h(x+y)\h(y-x)\h(z-x)}.  
\label{eec}
\end{equation}
%
%\epf

\end{theo}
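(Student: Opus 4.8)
The plan is to reduce Theorem~\ref{teb} to the known formulas for hexagons with two triangular dents via the same ``force and strip'' technique used for the magnet bar regions, and then to rewrite the resulting product into the $\w\cdot\ka$ form demanded by the statement. First I would examine the snowman region $S=S_{x,y}(a,b,c,k)$ directly: the top bowtie consists of a single up-pointing triangle of side $a$ sitting at the apex (the outer hexagon having a zero-length top side), while the two bottom bowties are single up-pointing triangles of sides $b$ and $c$ forming dents along the lower-left and lower-right sides. The up-pointing triangle of side $a$ at the very top, together with the degenerate top side, forces a batch of lozenges along the upper boundary; after removing these forced lozenges the leftover region should be recognizable as (a copy of) one of the two-dent hexagons enumerated in \cite{2i} (equivalently, in the symmetric case, in \cite{Rohatgi2dent}). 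Concretely I expect the leftover region to be a hexagon with two triangular dents of sides $b$ and $c$ along adjacent sides, with side parameters that can be read off to be the arguments $x+b+k$, $y+c+k$, $k$ appearing in $P'$. Thus the first key step is: \emph{identify the forced lozenges, strip them, and match the residual region against Theorem~2.1 of \cite{2i}}, thereby obtaining $\M(S)$ as an explicit product of hyperfactorials.

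The second key step is the bookkeeping that turns that explicit hyperfactorial product into the right-hand side of \eqref{eeb}. Here one must compute, for $S$ viewed as the triad hexagon with bowties of types $(0,a)$, $(0,b)$, $(0,c)$ and focal distance $f$, the weight $\w^{(S)}$ from \eqref{ebb} --- which with $a=b=c=0$ in the first three hyperfactorials of the numerator collapses to $\h(f)^4\h(a)\h(b)\h(c)/(\h(f)^3\h(f-a)\h(f-b)\h(f-c))$ --- and the six couples $\ka_A^{(S)},\dots,\ka_{AB}^{(S)}$ from \eqref{ebca}--\eqref{ebdc}, each of which is a product of two hyperfactorials of lattice distances from a focal point or focal edge to a side of the outer boundary of $S$. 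These distances are all linear expressions in $x,y,a,b,c,k$ and $f$ that can be read directly off the geometry in Figure~\ref{fea}; one has to be careful that $f$ is not a free parameter of $S$ but is itself determined by $a,b,c,k$ and the requirement $f\ge a'+b'+c'=a+b+c$ (so in fact $f$ should come out to be exactly $a+b+c+k$ or similar — pinning down this value is part of the geometric analysis). Substituting these distances and simplifying, the goal is to see the product $\w^{(S)}\,\ka_A^{(S)}\ka_B^{(S)}\ka_C^{(S)}/(\ka_{BC}^{(S)}\ka_{AC}^{(S)}\ka_{AB}^{(S)})\cdot P'(x+b+k,y+c+k,k)$ coincide, hyperfactorial-for-hyperfactorial, with the stripped two-dent formula from step one.

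The main obstacle I anticipate is not any single hard idea but rather the delicate matching of geometric distances with the parameters of the known formula: the two-dent results in \cite{2i} and \cite{Rohatgi2dent} are stated with their own parametrizations, and reconciling those with the $x,y,a,b,c,k$ parametrization of $S$ (and with the somewhat indirect way $S$ is presented as a triad hexagon with a zero-length top side) requires care, particularly in tracking how the strip of forced lozenges shifts the effective side-lengths. A secondary subtlety is that several of the distances $\de(\cdot,\cdot)$ will involve the focal edges $AB$, $AC$ (which emanate from the apex $A$) hitting the northeastern and northwestern sides; because the top side has length zero these ``sides'' degenerate to the apex itself, so one must interpret $\de(AB,NW)$ and $\de(AC,NE)$ correctly in that limiting configuration. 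Once the dictionary between the two parametrizations is fixed, the remaining verification is a routine — if lengthy — cancellation of hyperfactorials, of exactly the same flavor as the verification of Theorem~\ref{tea}, and I would organize it as a single displayed computation rather than belaboring each factor. Finally, it is worth noting that the special case $x=y$, $b=c$ recovers Rohatgi's result \cite{Rohatgi2dent} directly, which provides a useful consistency check on the parameter dictionary.
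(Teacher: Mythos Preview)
The paper does not actually prove Theorem~\ref{teb}: it simply records that the result is a special case of \cite[Theorem~2.1]{2i} (with the $x=y$, $b=c$ subcase due to Rohatgi~\cite{Rohatgi2dent}), and that the right-hand side of~\eqref{eeb} is a reformulation of the product formula from~\cite{2i} in the $\w$--$\ka$ language. So your overall plan --- invoke the known two-dent/doubly-intruded formula and then carry out the hyperfactorial bookkeeping that recasts it as $\w^{(S)}\cdot(\ka_A\ka_B\ka_C/\ka_{BC}\ka_{AC}\ka_{AB})\cdot P'$ --- is exactly what the paper is implicitly asserting, just made explicit.

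One point to watch in your first step: you describe the $a$-triangle as ``sitting at the apex'' of the degenerate hexagon and forcing a batch of lozenges that must be stripped before one can recognize a region from~\cite{2i}. The paper's wording (``a special case of Theorem~2.1 of~\cite{2i}'') suggests that no preliminary stripping is needed --- the snowman region \emph{is} already one of the regions enumerated in~\cite{2i}, not merely reducible to one. In the snowman geometry the focal point $A$ is the top vertex of the interior $a$-triangle, and this is not in general the apex of the outer (zero-top-side) hexagon; the $a$-hole sits strictly inside. So the forcing you anticipate does not occur in general, and the matching with~\cite{2i} should be done directly rather than after a strip step. This does not affect the substance of your argument --- step two, the rewriting, is where all the content lies --- but it does mean your parameter dictionary should be set up by comparing $S_{x,y}(a,b,c,k)$ directly with the doubly-intruded hexagons of~\cite{2i} rather than with a stripped residual region.
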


\begin{figure}[h]
%\vskip0.15in
  \centerline{
\hfill
{\includegraphics[width=0.44\textwidth]{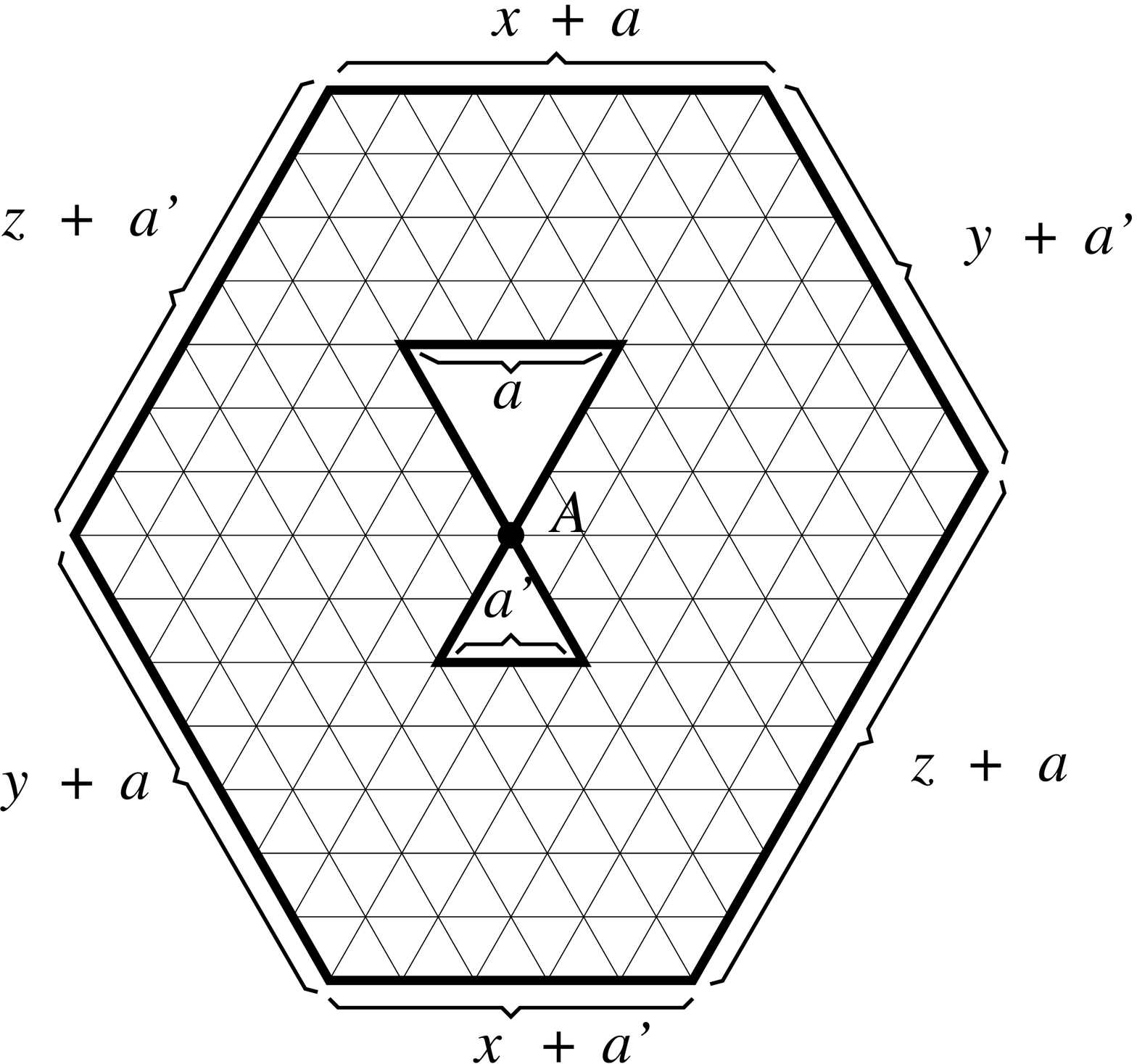}}
\hfill
{\includegraphics[width=0.38\textwidth]{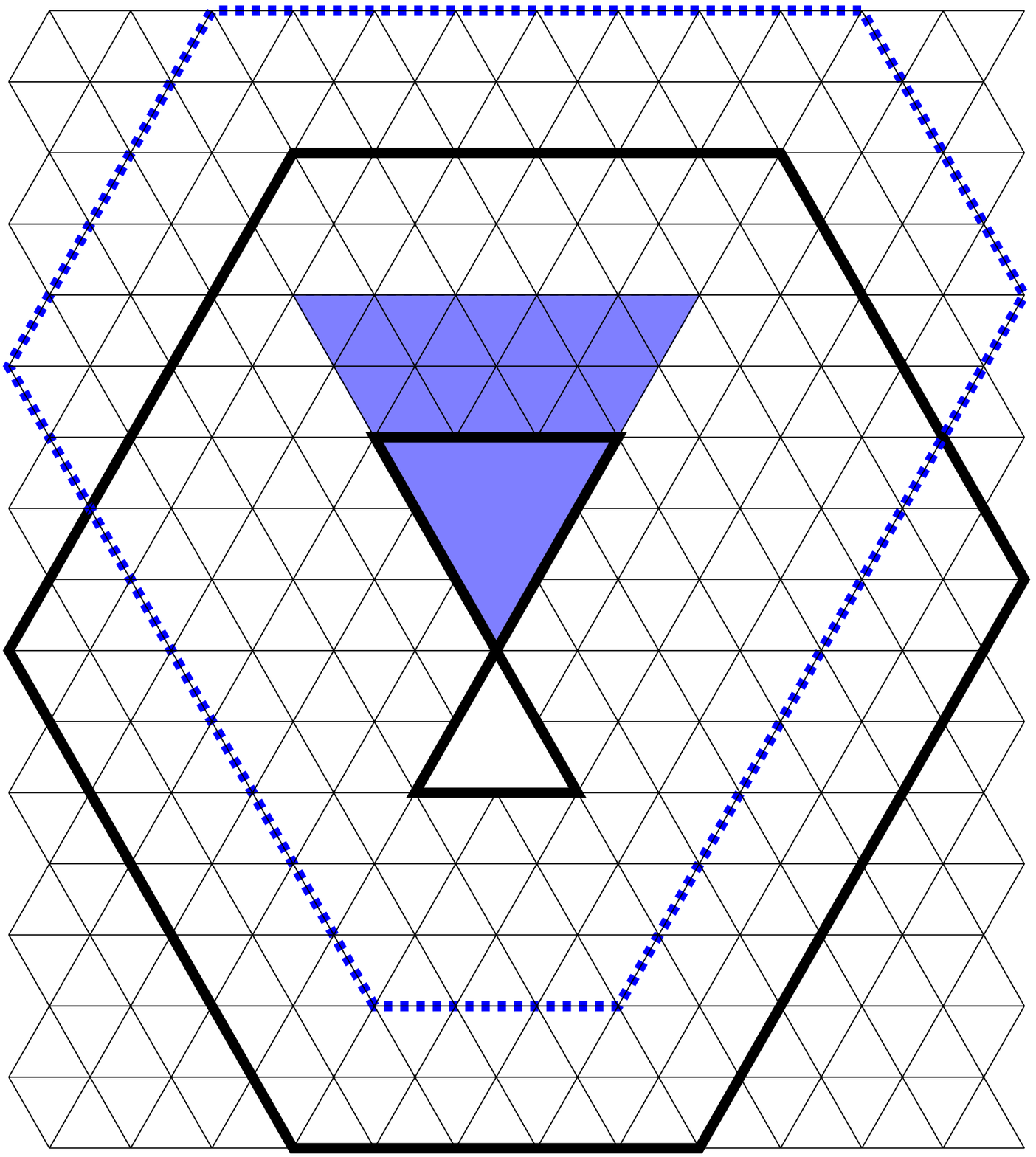}}
\hfill
}
%\vskip-0.1in
  \caption{\label{fha} The hourglass region $G=G_{x,y,z}^A(a,a')$ for $x=3$, $y=4$, $z=5$, $a=3$ and $a'=2$ (left). The region $G$ (shown in thick solid lines) and the corresponding region $\bar{G}$ (shown in thick dotted lines and shaded bowtie) with the bowtie completely squeezed out (right).}
%\vskip-0.15in
\end{figure}

\section{Hourglass regions}

Our proofs are based on Kuo's graphical condensation method (see \cite{Kuo}). For ease of reference, we state below the particular instance of Kuo's general results that we need for our proofs.

\begin{theo} \cite[Theorem 2.1]{Kuo}
\label{tca}
Let $G=(V_1,V_2,E)$ be a plane bipartite graph in which $|V_1|=|V_2|$. Let vertices $\alpha$, $\beta$, $\gamma$ and $\delta$ appear cyclically on a face of $G$. If $\alpha,\gamma\in V_1$ and $\beta,\delta\in V_2$, then
\begin{equation}
  \label{eca}
\M(G)\M(G-\{\alpha,\beta,\gamma,\delta\})=\M(G-\{\alpha,\beta\})\M(G-\{\gamma,\delta\})+\M(G-\{\alpha,\delta\})\M(G-\{\beta,\gamma\}).
\end{equation}
\end{theo}

%\begin{theo} \cite[Theorem 2.4]{Kuo}
%\label{tcaa}
%Let $G=(V_1,V_2,E)$ be a plane bipartite graph in which $|V_1|=|V_2|+1$. Let vertices $\alpha$, $\beta$, $\gamma$ and $\delta$ appear cyclically on a face of $G$. If $\alpha,\beta,\gamma\in V_1$ and $\delta\in V_2$, then
%%
%\begin{equation}
%  \label{eca}
%\M(G-\beta)\M(G-\{\alpha,\gamma,\delta\})=\M(G-\alpha)\M(G-\{\beta,\gamma,\delta\})+\M(G-\gamma)\M(G-\{\alpha,\beta,\delta\}).
%\end{equation}
%%
%\end{theo}

In this section we prove the special case of Theorem \ref{tba} in which two of the removed bowties are empty. The resulting region is described in the picture on the left in Figure \ref{fha}. We call it an {\it hourglass region,} and we denote it by $G_{x,y,z}^A(a,a')$ (as for triad hexagons, $A$ denotes the bowtie node).

\begin{prop}
\label{tha}
Let $G=G_{x,y,z}^A(a,a')$ be an hourglass region, and let $\thickbar{G}$ be the region obtained from $G$ by completely squeezing out the $a'$-lobe. Then
\begin{equation}
\frac{\M(G)}{\M(\thickbar{G})}=
\frac{\h(a)\h(a')}{\h(a+a')}\frac{\ka_A\ka_B\ka_C}{\ka_{BC}\ka_{AC}\ka_{AB}},
\label{eha}
\end{equation}
where the couples $\ka$ are given by \eqref{ebb}--\eqref{ebdc}, with $G$ viewed as a triad hexagon with the focal points $A$, $B$ and $C$ being the top, left and right vertices of the bottom bowtie lobe, and bowties of types $(a,a')$, $(0,0)$ and $(0,0)$, respectively.

\end{prop}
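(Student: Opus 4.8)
The plan is to establish \eqref{eha} by induction, with Kuo's condensation (Theorem~\ref{tca}) furnishing the inductive step and the known enumerations of magnet bar regions (Theorem~\ref{tea}) — and, if needed, of snowman regions (Theorem~\ref{teb}) — serving as base cases. A preliminary simplification is convenient: completely squeezing out the $a'$-lobe is the composition of $a'$ unit squeezings, producing hourglass regions $G=G^{(0)},G^{(1)},\dots,G^{(a')}=\thickbar{G}$ with $G^{(i)}=G_{x,y,z}^{A}(a+i,a'-i)$, the core parameters $x,y,z$ being left unchanged by squeezing, as one reads off from the definition of the operation. Each $G^{(i)}$, regarded as a triad hexagon, has focal distance $a'-i$ and hence weight $\w^{(G^{(i)})}=\h(a+i)\h(a'-i)/\h(a+a')$; moreover in $\thickbar{G}$ the three focal points coincide, which forces $\w^{(\thickbar{G})}\ka_A\ka_B\ka_C/(\ka_{BC}\ka_{AC}\ka_{AB})=1$. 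Thus the right-hand side of \eqref{eha} is multiplicative along the chain $G^{(0)},\dots,G^{(a')}$, and it suffices to prove the single-step case, i.e.\ to evaluate $\M(G_{x,y,z}^{A}(a,a'))/\M(G_{x,y,z}^{A}(a+1,a'-1))$ — or, equivalently, one simply runs the induction on \eqref{eha} itself.

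For the inductive step I would apply Theorem~\ref{tca} to the planar bipartite graph dual to the hourglass region, choosing the four vertices $\alpha,\gamma\in V_1$ and $\beta,\delta\in V_2$ to be unit triangles situated near the bottom side of the hexagon and near the bowtie node $A$, positioned so that in each of the five regions $G-\{\alpha,\beta,\gamma,\delta\}$, $G-\{\alpha,\beta\}$, $G-\{\gamma,\delta\}$, $G-\{\alpha,\delta\}$, $G-\{\beta,\gamma\}$ a cascade of lozenges is forced, and after removing those forced lozenges one is again left with an hourglass region whose parameters differ from those of $G$ only by small explicit shifts (lowering the bowtie one row toward the base, or decrementing one of $x,y,z$). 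Identity \eqref{eca} then becomes a linear recurrence among tiling numbers of hourglass regions. Inducting on a suitable size statistic — for instance $\de(BC,S)$, with a secondary induction on $x+y+z$ — the base case is the one in which the lower lobe of the bowtie lies on the bottom side of the hexagon, where, after clearing forced lozenges, $G$ is a magnet bar region and $\M(G)$ is supplied by Theorem~\ref{tea}; the remaining degenerations (bowtie touching two sides, or $a=0$) are covered by Theorem~\ref{teb} or are elementary, and the trivial base $a'=0$ gives $G=\thickbar{G}$ with both sides of \eqref{eha} equal to $1$. Substituting the claimed formula — known, by the inductive hypothesis, for every region on the right-hand side of the recurrence — then reduces the inductive step to a single identity among hyperfactorials, verified directly using $\h(n+1)/\h(n)=n!$.

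The main obstacle I anticipate is not the hyperfactorial identity (which, though bulky — the weight contributes ten hyperfactorials and the six couples twelve more — is mechanical once the parameter shifts are pinned down), but the geometric bookkeeping: one must pick the four unit triangles so that \emph{all five} auxiliary regions simultaneously collapse, under forcing of lozenges, to genuine hourglass regions, and then read off correctly how each boundary distance $\de(\cdot,\cdot)$ entering the couples $\ka$ changes. It is exactly this careful choice that makes the resulting hyperfactorial identity come out as an equality. A secondary point requiring care is tracking tileability along the induction: Lemma~\ref{tileability} keeps the squeezed regions tileable, but the auxiliary regions produced by Kuo's lemma need not be, so any such terms must be shown to vanish or be excluded by the choice of configuration.
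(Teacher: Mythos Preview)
Your overall strategy---Kuo condensation plus induction, with magnet bar and snowman regions as base cases---matches the paper's, and your preliminary observations (that $\w^{(\thickbar G)}$ times its couple ratio collapses to $1$, and that $\w^{(G)}=\h(a)\h(a')/\h(a+a')$) are correct and explain the shape of \eqref{eha}. But the specific Kuo setup and the induction variable you propose differ from the paper's, and your version has a gap.

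The paper places \emph{all four} unit triangles $\alpha,\beta,\gamma,\delta$ on the outer boundary of the hexagon (at corners along the northern, northwestern and southwestern sides; see Figure~\ref{fca} specialized to empty bottom bowties). The resulting recurrence \eqref{ehb} leaves the bowtie position $A$ and the lobe sizes $a,a'$ fixed and only shifts the parameters $x,y,z$; the induction is on $x+y+z$. The base cases are then: $x=0$ (or $y=0$, $z=0$), where $G$ factors into two ordinary hexagons separated by forced parallelograms and is handled directly via MacMahon's formula; bowtie touching the top or bottom side, which is a magnet bar region (Theorem~\ref{tea}); and bowtie touching the northwestern or southwestern side, which reduces to a snowman region (Theorem~\ref{teb}).

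Your proposal instead places some of the four triangles ``near the bowtie node $A$'' so as to lower the bowtie toward the base, inducting on $\de(BC,S)$. The difficulty is that Theorem~\ref{tca} requires $\alpha,\beta,\gamma,\delta$ to lie cyclically on a \emph{single} face of the planar dual, and the node $A$ sits on the boundary of the bowtie hole---an inner face---not on the outer face where the bottom side of the hexagon lives. Mixing the two faces is not permitted by Kuo's theorem as stated, so the recurrence you sketch (one that moves the bowtie position) does not come out of \eqref{eca} in the way you describe. If instead all four triangles are on the outer boundary, the bowtie position stays fixed and you are back to the paper's recurrence in $x,y,z$; in that case the induction variable should be $x+y+z$, and you must add the base cases $x=0$, $y=0$, $z=0$, which you do not mention.
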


\begin{proof} By Lemma \ref{tileability}, all hourglass regions are tileable. In particular, $\M(\thickbar{G})\neq0$, and the ratio on the left hand side of \eqref{eha} is well defined.

We prove the statement by induction, using Kuo's graphical condensation identity \eqref{eca} at the induction step. Consider the special case of Figure \ref{fca} when the bottom two bowties are empty --- this corresponds to the hourglass regions under consideration in this section.

Choosing $\alpha$, $\beta$, $\gamma$ and $\delta$ as shown in the top right picture in Figure \ref{fca}, and assuming the forced lozenges come in the pattern shown in Figure \ref{fca}, we obtain
\begin{align}
&
\M(G_{x,y,z}^{A}(a,a'))\M(G_{x,y-1,z-1}^{A}(a,a'))
=
%\nonumber
%\\[5pt]
%&\ \ \ \ \ \ \ \ \ \ \ \ \ \ \ \ \ \ \ \ \ \ \ \ \ \ \ \ \ \ \ \ 
\M(G_{x,y-1,z}^{A}(a,a'))\M(G_{x,y,z-1}^{A}(a,a'))
\nonumber
\\[5pt]
&\ \ \ \ \ \ \ \ \ \ \ \ \ \ \ \ \ \ \ \ \ \ \ \ \ \ \ \ \ 
+\M(G_{x-1,y,z}^{A}(a,a'))\M(G_{x+1,y-1,z-1}^{A}(a,a')).
\label{ehb}
\end{align}  

\begin{figure}[h]
%\vskip0.15in
  \centerline{
\hfill
{\includegraphics[width=0.28\textwidth]{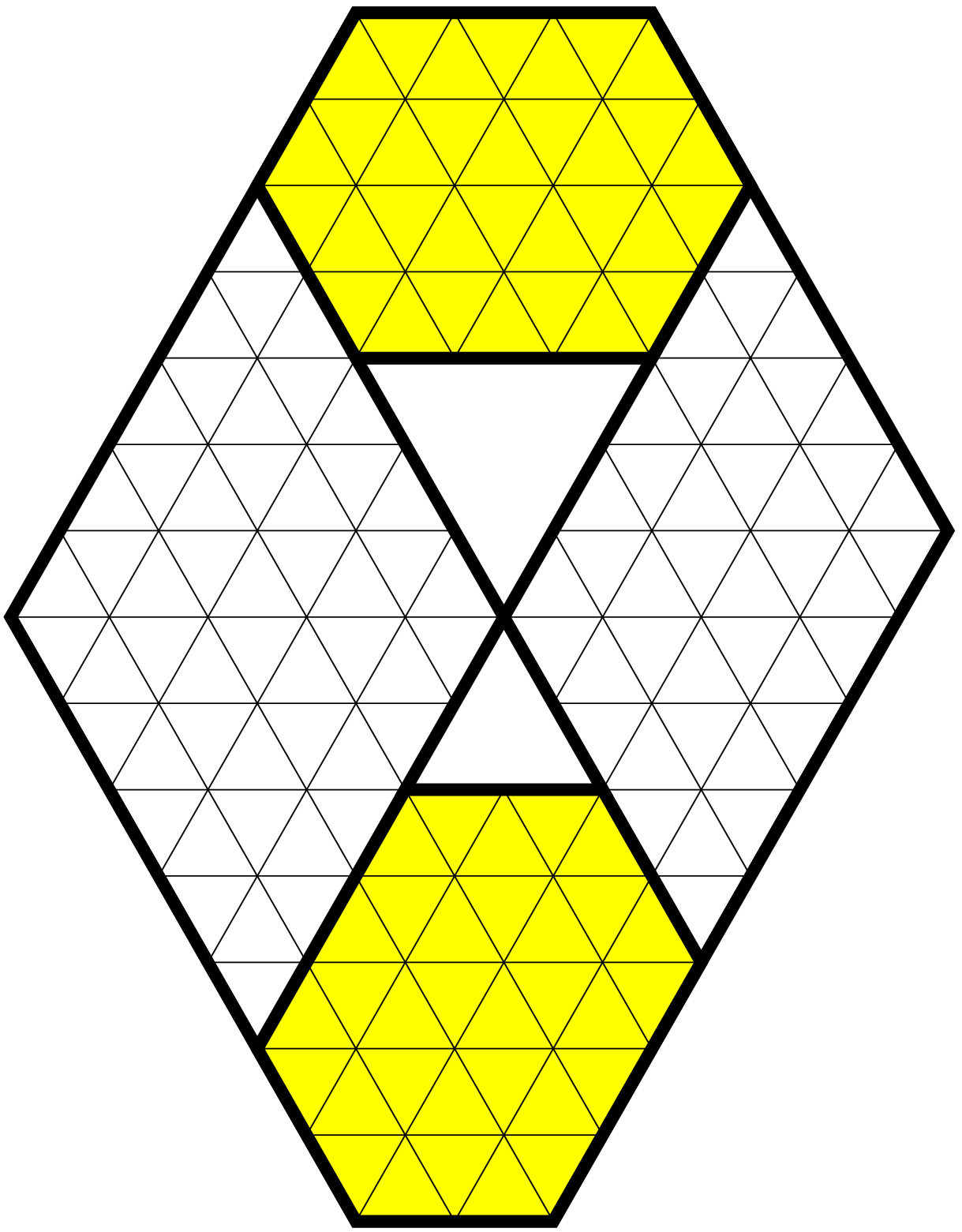}}
\hfill
{\includegraphics[width=0.30\textwidth]{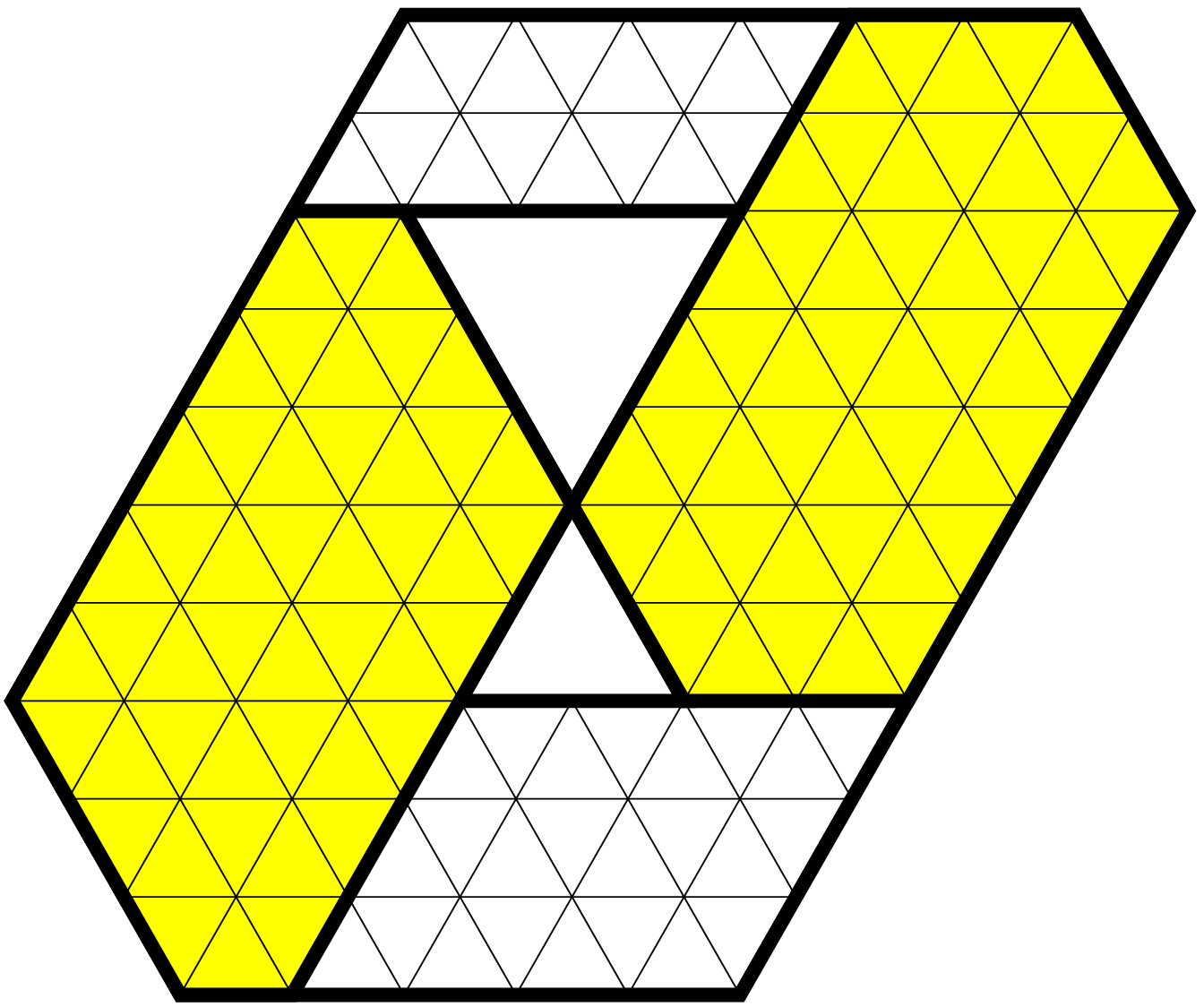}}
\hfill
}
%\vskip-0.1in
  \caption{\label{fhb} The hourglass region $G$ when $x=0$; here $y=4$, $z=5$, $a=3$ and $a'=2$ (left). The region $G$ when $y=0$; here $x=3$, $z=5$, $a=3$ and $a'=2$ (right).}
%\vskip-0.15in
\end{figure}

\begin{figure}[h]
%\vskip0.15in
  \centerline{
\hfill
{\includegraphics[width=0.36\textwidth]{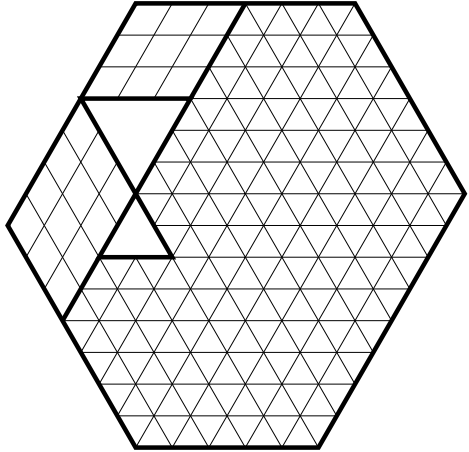}}
\hfill
%{\includegraphics[width=0.30\textwidth]{hourglassyis0.eps}}
%\hfill
}
%\vskip-0.1in
  \caption{\label{fhc} The hourglass region $G$ when the bowtie touches the northwestern side of the hexagon; here $x=3$, $y=4$, $z=5$, $a=3$ and $a'=2$.}
%\vskip-0.15in
\end{figure}

\parindent0pt
In order for all the regions involved in \eqref{ehb} to be defined, we need to have $x,y,z\geq1$. In order for the forced lozenges to be indeed as shown in Figure \ref{fca}, the bowtie must touch none of the top, bottom, northwestern or southwestern sides of the outer hexagon. Furthermore, since we will use \eqref{ehb} as a recurrence relation expressing $\M(G_{x,y,z}^{A}(a,a'))$ in terms of the other five tiling counts involved in \eqref{ehb}, we need to make sure that its coefficient in \eqref{ehb}, $\M(G_{x,y-1,z-1}^{A}(a,a'))$, is non-zero. Lemma \ref{tileability} implies that any hourglass region is tileable, so the coefficient of  $\M(G_{x,y,z}^{A}(a,a'))$ in \eqref{ehb} is indeed non-zero.

%The condition for the forced lozenges to come in the pattern shown in Figure \ref{fca} is that the bowtie touches none of the four previously mentioned sides of the hexagon.

%By the argument in the proof of Lemma \ref{tileability}, this holds if the bowtie touches none of the four previously mentioned sides of the hexagon\footnote{ Indeed, by the argument in the proof of Lemma \ref{tileability}, every balanced hourglass region can be tiled by lozenges. If the bowtie does not touch any of the top, bottom, northwestern or southwestern sides of the outer hexagon, then, after removing the forced lozenges from $G_{x,y,z}^{A}(a,a')\setminus\{\alpha,\beta,\gamma,\delta\}$, the leftover region is the smaller hourglass region $G_{x,y-1,z-1}^{A}(a,a')$, which is therefore tileable.}. 

\parindent15pt
Therefore, the following will be base cases of our induction: (1) $x$, $y$ or $z$ is zero; (2) the bowtie touches the top or the bottom side of the outer hexagon; and (3) the bowtie touches the northwestern or the southwestern side of the hexagon. Then \eqref{ehb} can be used to prove the statement by induction on $x+y+z$, as for the each of the five $G$-regions involved in it, the sum of the $x$-, $y$- and $z$-parameters is strictly less than $x+y+z$.

If $x=0$, the region $G$ looks as shown on the left in Figure \ref{fhb}. Due to the fact that the top side of the top bowtie lobe has the same length as the top side of the hexagon, the upper shaded hexagon must be internally tiled in any tiling of $G$. For the same reason, also the bottom shaded hexagon is internally tiled. The leftover portion of $G$ consists of two uniquely tileable parallelograms. It follows that in this case
\begin{equation}
\M(G)=\M(G_{0,y,z}^A(a,a'))=P(a,\de(A,NW)-a,\de(A,NE)-a)\,P(a',\de(A,SW)-a',\de(A,SE)-a'),
\label{ehc}
\end{equation}
where $P$ is given by \eqref{eaa}.
Since $\thickbar{G}=G_{0,y,z}^{A}(a+a',0)$, it follows from \eqref{ehc} (using also the picture on the right in Figure \ref{fha} to relate the distances from $A$ to the outer sides in $G$ and $\thickbar{G}$) that
\begin{equation}
\M(\thickbar{G})=P(a+a',\de(A,NW)-a+a',\de(A,NE)-a+a').
\label{ehd}
\end{equation}
The statement follows by combining equations \eqref{ehc} and \eqref{ehd}. The base cases $y=0$ and $z=0$ follow by a similar argument.

It is apparent that if the bowtie touches either the top or the bottom side of the hexagon, the hourglass region becomes a magnet bar region (see the previous section). Therefore this base case follows from Theorem \ref{tea}.

%The cases when the bowtie touches the northwestern or the southwestern side of the hexagon are symmetric.

If the bowtie touches the northwestern side of the hexagon, the situation is as pictured in Figure \ref{fhc}. After removing the forced lozenges, the leftover region is a special case of a snowman region. This base case follows therefore by Theorem \ref{teb}. The case when the bowtie touches the sothwestern side follows by symmetry.

For the induction step, let $x,y,z>0$ and assume that equation \eqref{eha} holds for any tileable hourglass region for which the sum of the $x$-, $y$- and $z$-parameters is strictly less than $x+y+z$. Let $G_{x,y,z}^A(a,a')$ be an hourglass region in which the bowtie does not touch any of the top, bottom, northwestern or southwestern sides. We need to show that \eqref{eha} holds.

By our assumptions, all the regions in \eqref{ehb} are well defined, and $\M(G_{x,y-1,z-1}^{A}(a,a'))>0$. We can therefore use \eqref{ehb} to express $\M(G_{x,y,z}^{A}(a,a'))$ in terms of the number of tilings of the other five regions involved. By the induction hypothesis, we can use the equivalent restatement
\begin{equation}
\M(G)
=
\M(\thickbar{G})
\frac{\h(a)\h(a')}{\h(a+a')}\frac{\ka_A\ka_B\ka_C}{\ka_{BC}\ka_{AC}\ka_{AB}}
\label{ehe}
\end{equation}
of formula \eqref{eha} for each of the latter. To finish the induction step, we need to verify that the resulting expression for $\M(G_{x,y,z}^{A}(a,a'))$ is equal to the right hand side of \eqref{eha}. This
%follows precisely in the same way as
is a special case of the verification at
the induction step in the proof of Theorem \ref{tba}, which we present in great detail in Sections 8 and 9. \end{proof}

\section{Based hourglass regions}

In this section we consider the family of regions $F_{d,e,f,y,z}(a,a',b',c')$ described in Figure \ref{fia}; we call them {\it based hourglass regions}. The region $F_{d,e,f,y,z}(a,a',b',c')$ is defined for any non-negative integers $d$, $e$, $f$, $y$, $z$, $a$, $a'$, $b'$ and $c'$ satisfying $f\leq y+z$ (a condition equivalent to the statement that the top of the bowtie is weakly below the top of the outer hexagon).

Given a based hourglass region $F=F_{d,e,f,y,z}(a,a',b',c')$, its {\it companion cored hexagon} $F_0$ is the region described in Figure \ref{fib}. It is obtained from the region $\thickbar{F}$ by removing the portion under the line containing the bottom focal edge.

\begin{prop}
\label{basedhg}
Let $d$, $e$, $f$, $y$, $z$, $a$, $a'$, $b'$ and $c'$ be non-negative integers with $f\leq y+z$, and consider the based hourglass region $F=F_{d,e,f,y,z}(a,a',b',c')$ and its companion cored hexagon~$F_0$. Then
\begin{align}
\frac{\M(F)}{\M(F_0)}
=
\frac{\h(a)\h(a')\h(b')\h(c')}{\h(a+a')\h(b'+c')}
&
\frac{\h(f+a'+b'+c')^2\h(f+a+a')}{\h(f+a+a'+b'+c')\h(f+a'+b')\h(f+a'+c')}
\nonumber
\\[10pt]
\times
&
\dfrac
{\dfrac{\ka_A^{(F)}\ka_B^{(F)}\ka_C^{(F)}}{\ka_{BC}^{(F)}\ka_{AC}^{(F)}\ka_{AB}^{(F)}}}
{\dfrac{\ka_{A_0}^{(\thickbar{F})}\ka_{B_0}^{(\thickbar{F})}\ka_{C_0}^{(\thickbar{F})}}
{\ka_{B_0C_0}^{(\thickbar{F})}\ka_{A_0C_0}^{(\thickbar{F})}\ka_{A_0B_0}^{(\thickbar{F})}}},
\label{eia}
\end{align}

\begin{figure}[h]
%\vskip0.15in
  \centerline{
\hfill
{\includegraphics[width=0.60\textwidth]{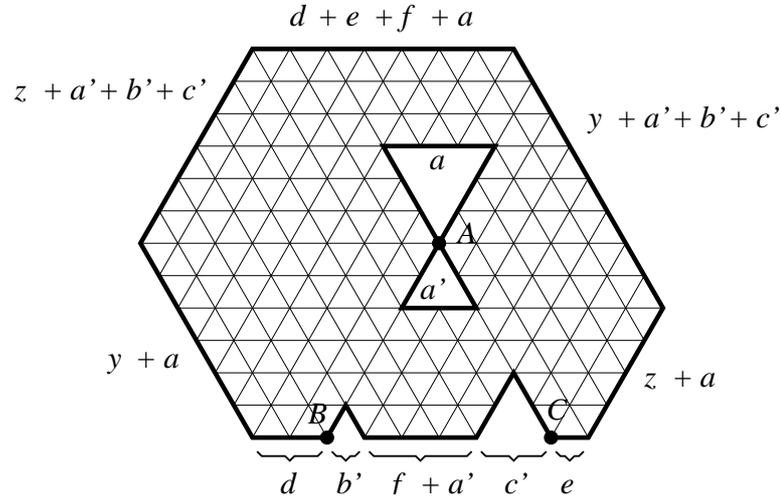}}
\hfill
%{\includegraphics[width=0.30\textwidth]{hourglassyis0.eps}}
%\hfill
}
%\vskip-0.1in
  \caption{\label{fia} The based hourglass region $F=F_{d,e,f,y,z}(a,a',b',c')$ for $d=2$, $e=1$, $f=1$, $y=3$, $z=1$, $a=3$, $a'=2$, $b'=1$ and $c'=2$.}
\vskip-0.10in
\end{figure}

\begin{figure}[h]
%\vskip0.15in
  \centerline{
\hfill
{\includegraphics[width=0.60\textwidth]{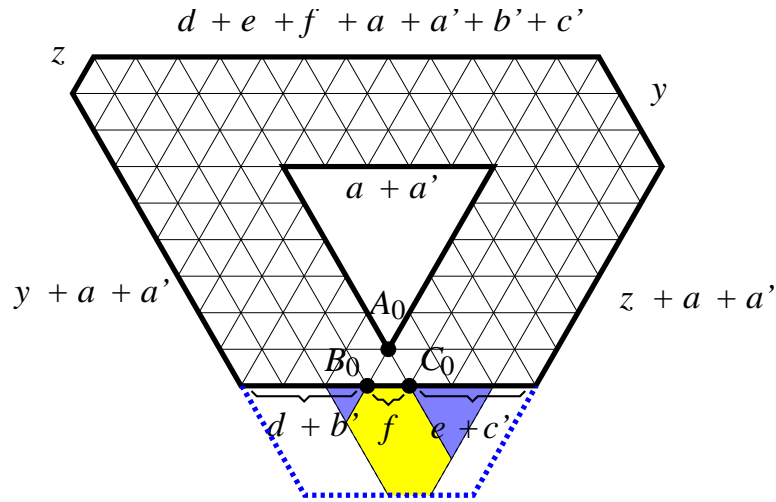}}
\hfill
%{\includegraphics[width=0.30\textwidth]{hourglassyis0.eps}}
%\hfill
}
%\vskip-0.1in
  \caption{\label{fib} The companion cored hexagon $F_0$ (bounded by the thick solid lines) of the based hourglass region $F$ in Figure \ref{fia}. It is obtained from the region $\bar{F}$ by removing the bottom two holes, an internally tiled hexagon and two parallelograms along which the tiling is forced.}
%\vskip-0.15in
\end{figure}

\parindent0pt
where the couples\footnote{ Recall that the couples $\ka$ are defined by equations (2.3)--(2.8).} $\ka$ in the numerator fraction refer to the region $F$ with focal points $A$, $B$, $C$ as indicated in Figure $\ref{fia}$, and the couples $\ka$ in the denominator fraction refer to the region $\thickbar{F}$ with focal points $A_0$, $B_0$, $C_0$ as indicated in Figure $\ref{fib}$. Explicitly, we have
\begin{align}
\ka_A^{(F)}&=H(f+a'+b'+c')H(y+z+a-f)
\label{eiaa}
\\
\ka_B^{(F)}&=H(d)H(z+e+f+a+a'+b'+c')
\label{eiab}
\\
\ka_C^{(F)}&=H(e)H(y+d+f+a+a'+b'+c')
\label{eiac}
\\
%\end{align}
%
%
%\begin{align}
\ka_{BC}^{(F)}&=H(0)H(y+z+a+a'+b'+c')
\label{eiad}
\\
\ka_{AC}^{(F)}&=H(d+f+a'+b'+c')H(z+e+a)
\label{eiae}
\\
\ka_{AB}^{(F)}&=H(e+f+a'+b'+c')H(y+d+a)
\label{eiaf}
\end{align}
and
\begin{align}
%\\
\ka_{A_0}^{(\thickbar{F})}&=H(f+b'+c')H(y+z+a+a'-f)
\label{eiag}
\\
\ka_{B_0}^{(\thickbar{F})}&=H(d+b')H(z+e+f+a+a'+c')
\label{eiah}
\\
\ka_{C_0}^{(\thickbar{F})}&=H(e+c')H(y+d+f+a+a'+b')
\label{eiai}
%\end{align}
%
%
%
%\begin{align}
\\
\ka_{B_0C_0}^{(\thickbar{F})}&=H(b'+c')H(y+z+a+a')
\label{eiaj}
\\
\ka_{A_0C_0}^{(\thickbar{F})}&=H(d+f+b')H(z+e+a+a'+c')
\label{eiak}
\\
\ka_{A_0B_0}^{(\thickbar{F})}&=H(e+f+c')H(y+d+a+a'+b').
\label{eial}
\end{align}

\end{prop}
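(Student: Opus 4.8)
The strategy is the same inductive scheme used for Proposition \ref{tha}: reduce the statement to a product-formula identity, establish base cases, and run Kuo condensation (Theorem \ref{tca}) on $x+y+z$ (here, on the size parameters $y,z$ together with the position parameters $d,e,f$). First I would dispose of the explicit formulas \eqref{eiaa}--\eqref{eial} for the couples: these are immediate from Figure \ref{fia} and Figure \ref{fib} by reading off the six lattice distances $\de(A,N), \de(A,S), \de(B,NE), \de(B,SW), \de(C,NW), \de(C,SE)$ for $F$ and the corresponding six distances for $\thickbar F$. The key point is that squeezing out a bowtie $d$ units pushes the top three sides out and pulls the bottom three in by $d$ each, so the six distances change in a controlled way; since $\thickbar F$ is obtained from $F$ by completely squeezing out the $(a,a')$-bowtie (which shrinks the inner lobe from $a'$ to $0$), one gets \eqref{eiag}--\eqref{eial} from \eqref{eiaa}--\eqref{eiaf} by the substitution that trades $a'$ for $0$ in the top distances and adds $a'$ to the bottom distances, while the focal edges $AB, AC, BC$ translate accordingly. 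Plugging these into the ratio $\dfrac{\ka_A^{(F)}\ka_B^{(F)}\ka_C^{(F)}}{\ka_{BC}^{(F)}\ka_{AC}^{(F)}\ka_{AB}^{(F)}}\Big/\dfrac{\ka_{A_0}^{(\thickbar F)}\ka_{B_0}^{(\thickbar F)}\ka_{C_0}^{(\thickbar F)}}{\ka_{B_0C_0}^{(\thickbar F)}\ka_{A_0C_0}^{(\thickbar F)}\ka_{A_0B_0}^{(\thickbar F)}}$ turns \eqref{eia} into a pure hyperfactorial identity in the parameters $d,e,f,y,z,a,a',b',c'$, which is what the induction must verify.

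\textbf{Base cases.} I would take as base cases the degenerate configurations in which the Kuo recurrence is unavailable or in which one of the five regions it produces fails to be defined. These are: (1) $f=0$ or $y+z=f$ (the top focal point touches, respectively, the bottom focal edge line or the top of the outer hexagon), in which case $F$ degenerates to a region whose tilings factor through forced lozenges into parallelograms and a smaller cored hexagon or into a based hourglass with no inner lobe; (2) $y=0$ or $z=0$, or the bowtie touching the northeastern or northwestern side, so that after removing forced lozenges $F$ reduces to a magnet bar region or a snowman region, handled by Theorem \ref{tea} and Theorem \ref{teb} respectively; and (3) $a'=0$ or $b'=0$ or $c'=0$, degenerate lobe cases where the formula collapses. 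In each such case the claimed identity reduces to MacMahon's formula \eqref{eaa}, the $P'$ formula \eqref{eec}, or to Theorems \ref{tea}--\ref{teb}, after accounting for the forced lozenges in the passage from $\thickbar F$ to $F_0$ described in the caption of Figure \ref{fib} (an internally tiled hexagon and two forced parallelograms removed, plus the bottom two holes).

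\textbf{Induction step.} For generic parameters I would apply Theorem \ref{tca} to the planar bipartite graph dual to $F$, choosing the four unit-triangle vertices $\alpha,\beta,\gamma,\delta$ on the outer face exactly as in the hourglass case (near the four relevant corners of the outer hexagon, on the sides whose position is governed by $y$ and $z$), so that the six resulting regions $F - \{\cdot\}$ are, after removing forced lozenges, again based hourglass regions $F_{d,e,f,y',z'}(a,a',b',c')$ with $y'+z' < y+z$. This yields a recurrence expressing $\M(F)$ as a $\pm$ combination of products of $\M$'s of strictly smaller based hourglass regions, with the coefficient of $\M(F)$ being $\M$ of another such region, known to be nonzero by an argument parallel to the one following \eqref{ehb} (using Lemma \ref{tileability} for tileability of the companion hexagons and the nonvanishing of $\M(F_0)$). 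Substituting the induction hypothesis \eqref{eia} into each term, the identity to be checked becomes a single algebraic relation among hyperfactorials; since all six companion cored hexagons $F_0'$ in the recurrence satisfy the same Kuo-type relation (they are honest cored hexagons enumerated by \eqref{eaa}), the hyperfactorial identity factors as (a known MacMahon-type recurrence for $P$) times (a recurrence for the weight-and-couple prefactor), and the latter is a routine — though lengthy — cancellation of $\h(\cdot)$ arguments shifted by $\pm 1$.

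\textbf{Main obstacle.} The genuine difficulty is bookkeeping at the induction step: verifying that the prefactor $\dfrac{\h(a)\h(a')\h(b')\h(c')}{\h(a+a')\h(b'+c')}\dfrac{\h(f+a'+b'+c')^2\h(f+a+a')}{\h(f+a+a'+b'+c')\h(f+a'+b')\h(f+a'+c')}\cdot\dfrac{\ka\text{-ratio for }F}{\ka\text{-ratio for }\thickbar F}$ satisfies exactly the same three-term recurrence (coming from \eqref{eca}) as $\M(F)/\M(F_0)$ does — equivalently, that the $\ka$-ratios telescope correctly under the simultaneous decrements of $y$ and $z$. This is the step the authors defer to Sections 8 and 9 in the hourglass case, and the same detailed computation, carried out with the explicit formulas \eqref{eiaa}--\eqref{eial}, settles it here; the only extra subtlety over the hourglass case is tracking the contributions of the two previously-empty lobes $b'$ and $c'$, which enter only through the $F$-side couples and the fixed prefactor.
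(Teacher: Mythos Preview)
Your overall strategy --- Kuo condensation plus induction, with base cases handled by the already-established Theorems \ref{tea}, \ref{teb} and Proposition \ref{tha} --- is correct and matches the paper. But several of the specifics are off, and two of the gaps are substantive.

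\medskip
\textbf{The recurrence and the induction variable.} The paper does \emph{not} use the hourglass Kuo placement here. For based hourglass regions the four unit triangles are taken along the southwestern, northwestern, northern and northeastern sides (Figure \ref{fic}), and the resulting recurrence \eqref{eic} decrements the parameter $d$ as well as $y$ and $z$; one of the five smaller regions is $F_{d-1,e,f,y+1,z-1}(\cdots)$, which has the same $y+z$ as $F$. So induction on $y+z$ alone, as you propose, does not terminate: the paper inducts on $d+e+y+z$ (using the mirror symmetry $d\leftrightarrow e$, $y\leftrightarrow z$, $b'\leftrightarrow c'$ to cover the case $d=0$, $e\geq 1$). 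Your claim that the six regions are all of the form $F_{d,e,f,y',z'}$ with $y'+z'<y+z$ is therefore not correct for any Kuo placement that keeps you inside the family of based hourglass regions.

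\medskip
\textbf{Missing base case $d=e=0$.} Because the recurrence decrements $d$, the case $d=e=0$ is an unavoidable base case, and it is absent from your list. In the paper this is exactly where Proposition \ref{tha} enters: when $d=e=0$ the forced lozenges along the two bottom corners strip away the $b'$- and $c'$-lobes and what remains is a genuine hourglass region $G$, so \eqref{eia} reduces to \eqref{eha}. Without this link your induction has no anchor on the $d,e$ side.

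\medskip
\textbf{The bowtie-touches-top case.} Your description of the case $f=y+z$ (``factors through forced lozenges into parallelograms and a smaller cored hexagon or into a based hourglass with no inner lobe'') is not how this case is resolved, and I do not see a forcing argument that accomplishes it. The paper instead \emph{extends} $F$ downward by adjoining an internally-tiled hexagon below the base, producing a triad hexagon with all three bowties touching the boundary; this is then handled by Theorem \ref{3btouching} (Lai's three-dent formula). That theorem is an essential external input here, not something recoverable from Theorems \ref{tea}--\ref{teb} and forcing alone.

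\medskip
\textbf{Superfluous base cases.} The parameters $a,a',b',c'$ and $f$ never change in the recurrence \eqref{eic}, so there is no need to single out $f=0$ or $a'=b'=c'=0$ as base cases; conversely, including them does no harm but also does not help the induction terminate.

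\medskip
Finally, a small correction to your induction-step sketch: the companion $F_0$ is a cored hexagon (a hexagon with one triangular hole), not a plain hexagon, so it is not ``enumerated by \eqref{eaa}''. What makes the verification go through is the Section-9 mechanism: the $\ka$-products pair up identically across the three terms of \eqref{eic}, and the residual identity is precisely the Kuo relation for the six regions $\thickbar{F}$ (equivalently, for the six $F_0$'s), which holds by Theorem \ref{tca} applied to $\thickbar{F}$ itself.
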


\parindent0pt
\textsc{Remark 3.}
Note that in the region $\thickbar{F}$ the hexagon indicated by a shading in Figure \ref{fib} is internally tiled, and that each tiling is forced along the parallelograms to its left and right. Using this and formula \eqref{eaa}, one readily sees that equation \eqref{eia} can be stated equivalently as
\begin{align}
\frac{\M(F)}{\M(\thickbar{F})}
=
\frac{\w^{(F)}}{\w^{(\thickbar{F})}}
\dfrac
{\dfrac{\ka_A^{(F)}\ka_B^{(F)}\ka_C^{(F)}}{\ka_{BC}^{(F)}\ka_{AC}^{(F)}\ka_{AB}^{(F)}}}
{\dfrac{\ka_{A_0}^{(\thickbar{F})}\ka_{B_0}^{(\thickbar{F})}\ka_{C_0}^{(\thickbar{F})}}
{\ka_{B_0C_0}^{(\thickbar{F})}\ka_{A_0C_0}^{(\thickbar{F})}\ka_{A_0B_0}^{(\thickbar{F})}}},
\label{eib}
\end{align}
where the weights $\w$ are given by \eqref{ebb}.

\medskip
\parindent15pt
{\it Proof of Proposition \ref{basedhg}.} By Lemma \ref{tileability}, all based hourglass regions are tileable. In particular, $\M(F_0)\neq0$, so the ratio on the left hand side of \eqref{eia} is well defined.

We prove the statement by induction, using Kuo condensation at the induction step. The picture on the top left in Figure \ref{fic} shows the region $F_{d,e,f,y,z}(a,a',b',c')$. Choosing $\alpha$, $\beta$, $\gamma$ and $\delta$ as shown in the top right picture in Figure \ref{fic}, and assuming the forced lozenges come in the pattern shown in Figure \ref{fic}, we obtain
\begin{align}
&
\M(F_{d,e,f,y,z}(a,a',b',c'))\M(F_{d-1,e,f,y,z-1}(a,a',b',c'))
=
\nonumber
\\[5pt]
&\ \ \ \ \ \ \ \ \ \ \ \ \ \ \ \ \ \ \ \ \ \ \ \ \ \ \ \ \ \ \ \ 
\M(F_{d,e,f,y-1,z}(a,a',b',c'))\M(F_{d-1,e,f,y+1,z-1}(a,a',b',c'))
\nonumber
\\[5pt]
&\ \ \ \ \ \ \ \ \ \ \ \ \ \ \ \ \ \ \ \ \ \ \ \ \ \ \ \ \ 
+\M(F_{d-1,e,f,y,z}(a,a',b',c'))\M(F_{d,e,f,y,z-1}(a,a',b',c')).
\label{eic}
\end{align}  

\parindent0pt
In order for all the regions involved in \eqref{eic} to be defined, we need to have $d,y,z\geq1$ (note that these imply that the lengths of the southwestern, northwestern, northern and northeastern sides in $F_{d,e,f,y,z}(a,a',b',c')$ are all positive, and thus there is room on them to accomodate the unit triangles $\alpha$, $\beta$, $\gamma$ and $\delta$ indiacted in Figure \ref{fic}). In order for the forced lozenges to be indeed as shown in Figure \ref{fic}, the bowtie must touch none of the top, northwestern, northeastern or southwestern sides of $F_{d,e,f,y,z}(a,a',b',c')$. Furthermore, since we will use \eqref{eic} as a recurrence relation expressing $\M(F_{d,e,f,y,z}(a,a',b',c'))$ in terms of the other five tiling counts involved in \eqref{eic}, we need to make sure that its coefficient in \eqref{eic}, $\M(F_{d-1,e,f,y,z-1}(a,a',b',c'))$, is non-zero. Lemma \ref{tileability} implies that any based hourglass region is tileable, so for $d,z\geq1$ the coefficient of  $\M(F_{d,e,f,y,z}(a,a',b',c'))$ in \eqref{eic} is indeed non-zero.

\parindent15pt
If we take the mirror image of the region $F_{d,e,f,y,z}(a,a',b',c')$ across the vertical, we obtain the region $F_{e,d,f,z,y}(a,a',c',b')$. It follows from the discussion in the previous paragraph that if $e,y,z\geq1$ and the bowtie touches none of the sides above the base,
%top, northwestern, northeastern or southwestern sides of $F_{d,e,f,y,z}(a,a',b',c')$,
we can still use identity \eqref{eic} at the induction step.

Therefore, the base cases for our induction will be the following: (1) $y$ or $z$ is zero; (2) $d=e=0$; (3) the bowtie touches the top side; (4) the bowtie touches the northwestern or the northeastern side; and (5) the bowtie touches the southwestern or southeastern side.

\begin{figure}[h]
%\vskip0.15in
  \centerline{
\hfill
{\includegraphics[width=0.38\textwidth]{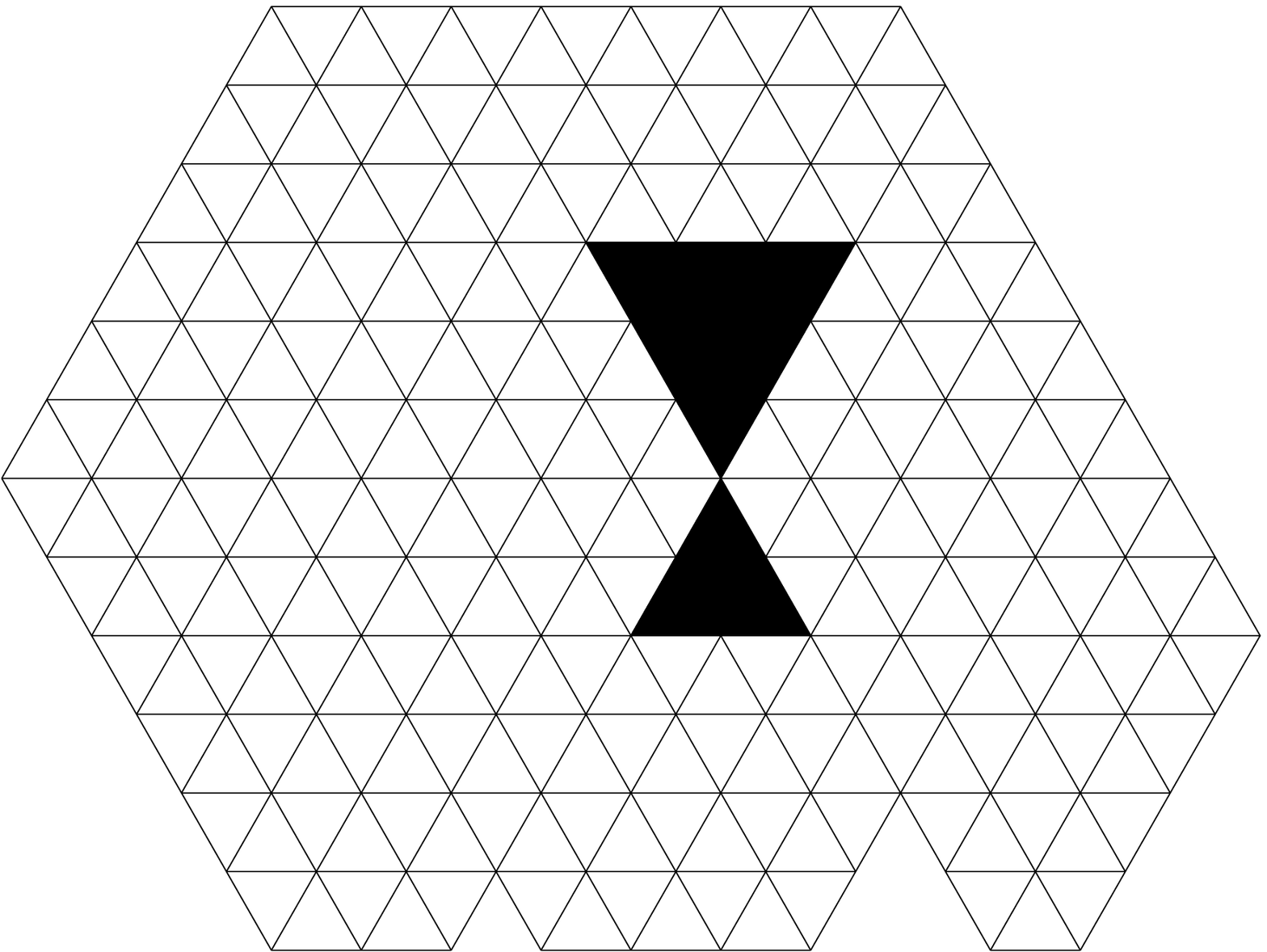}}
\hfill
{\includegraphics[width=0.38\textwidth]{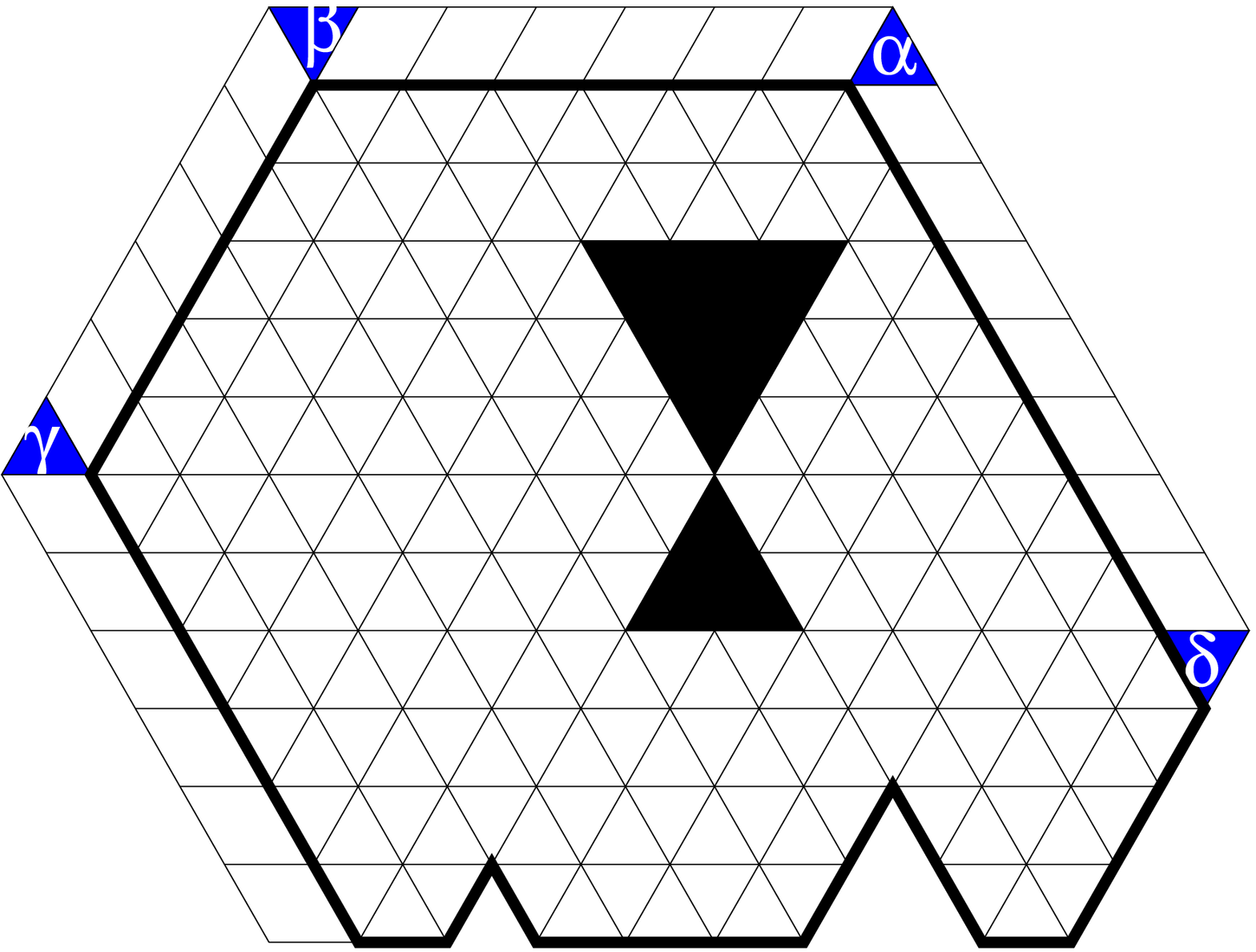}}
\hfill
}
\vskip0.2in
  \centerline{
\hfill
{\includegraphics[width=0.38\textwidth]{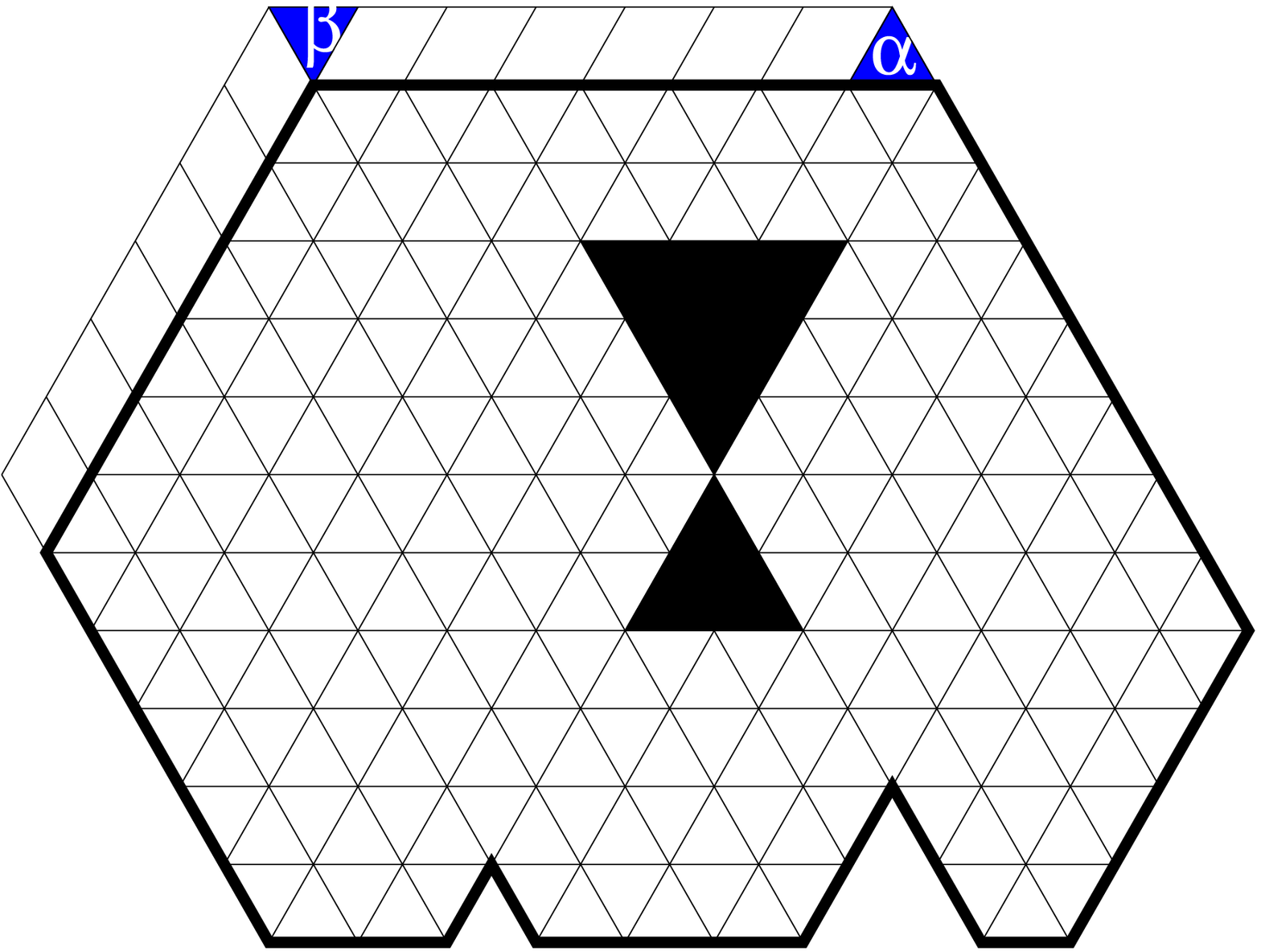}}
\hfill
{\includegraphics[width=0.38\textwidth]{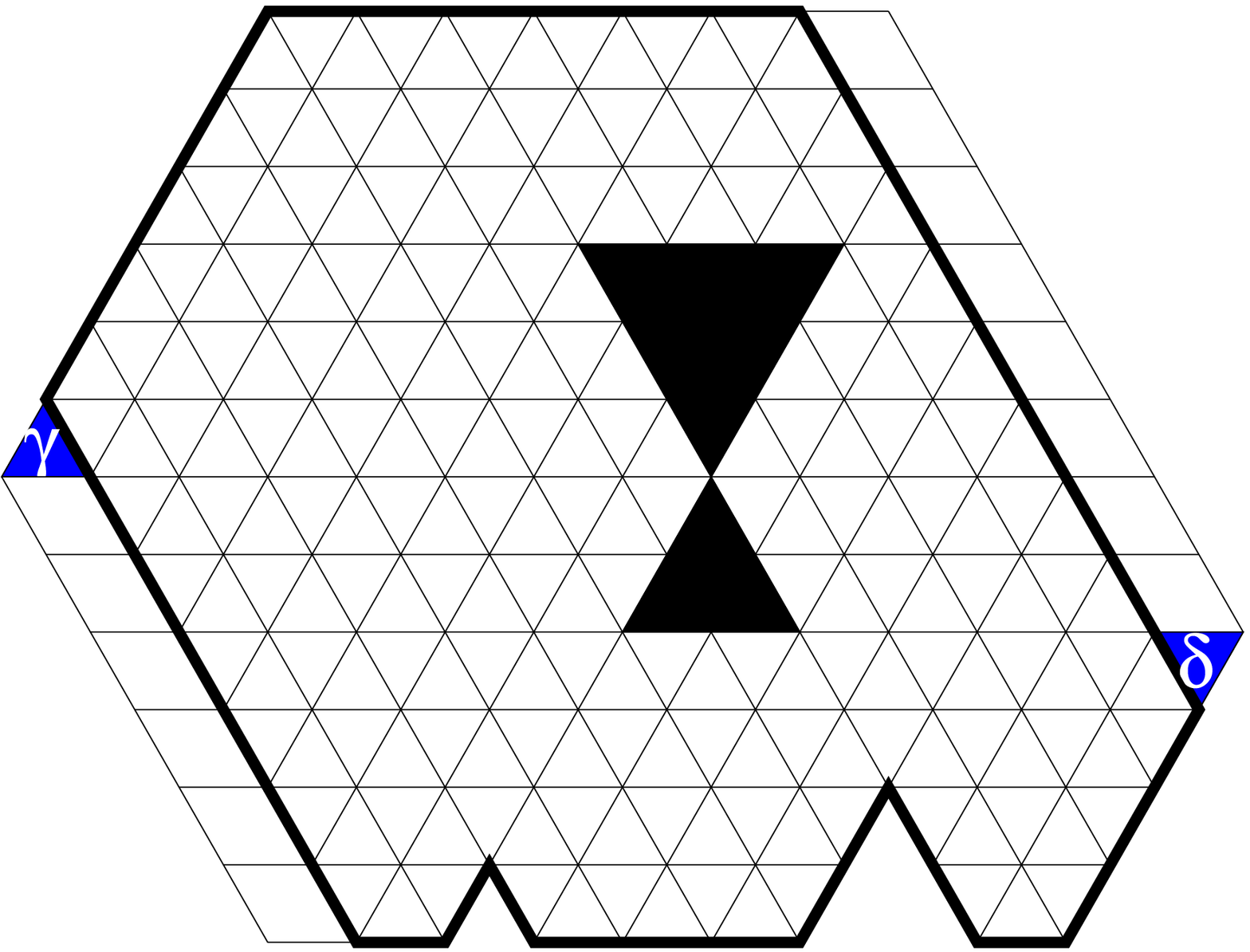}}
\hfill
}
\vskip0.2in
  \centerline{
\hfill
{\includegraphics[width=0.38\textwidth]{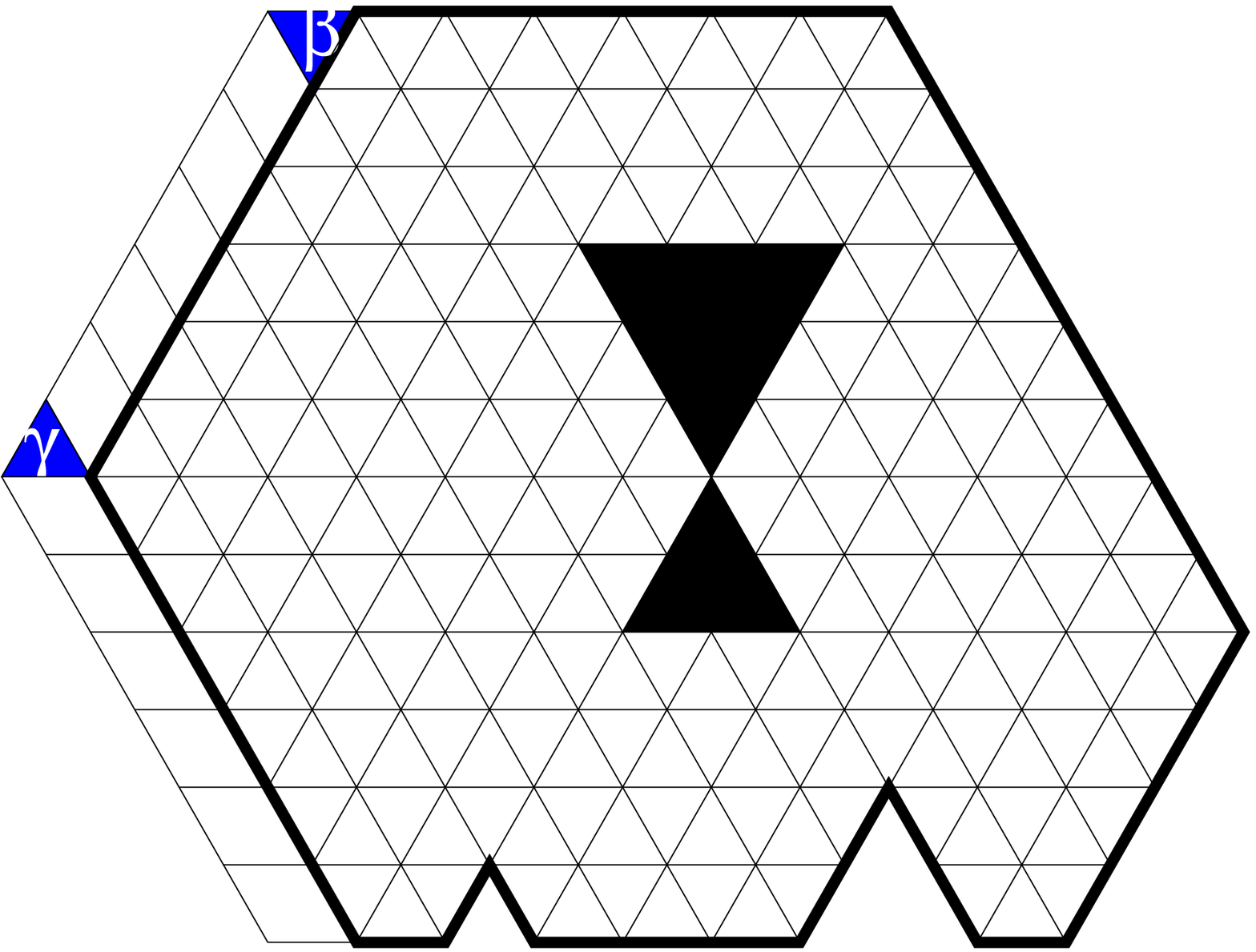}}
\hfill
{\includegraphics[width=0.38\textwidth]{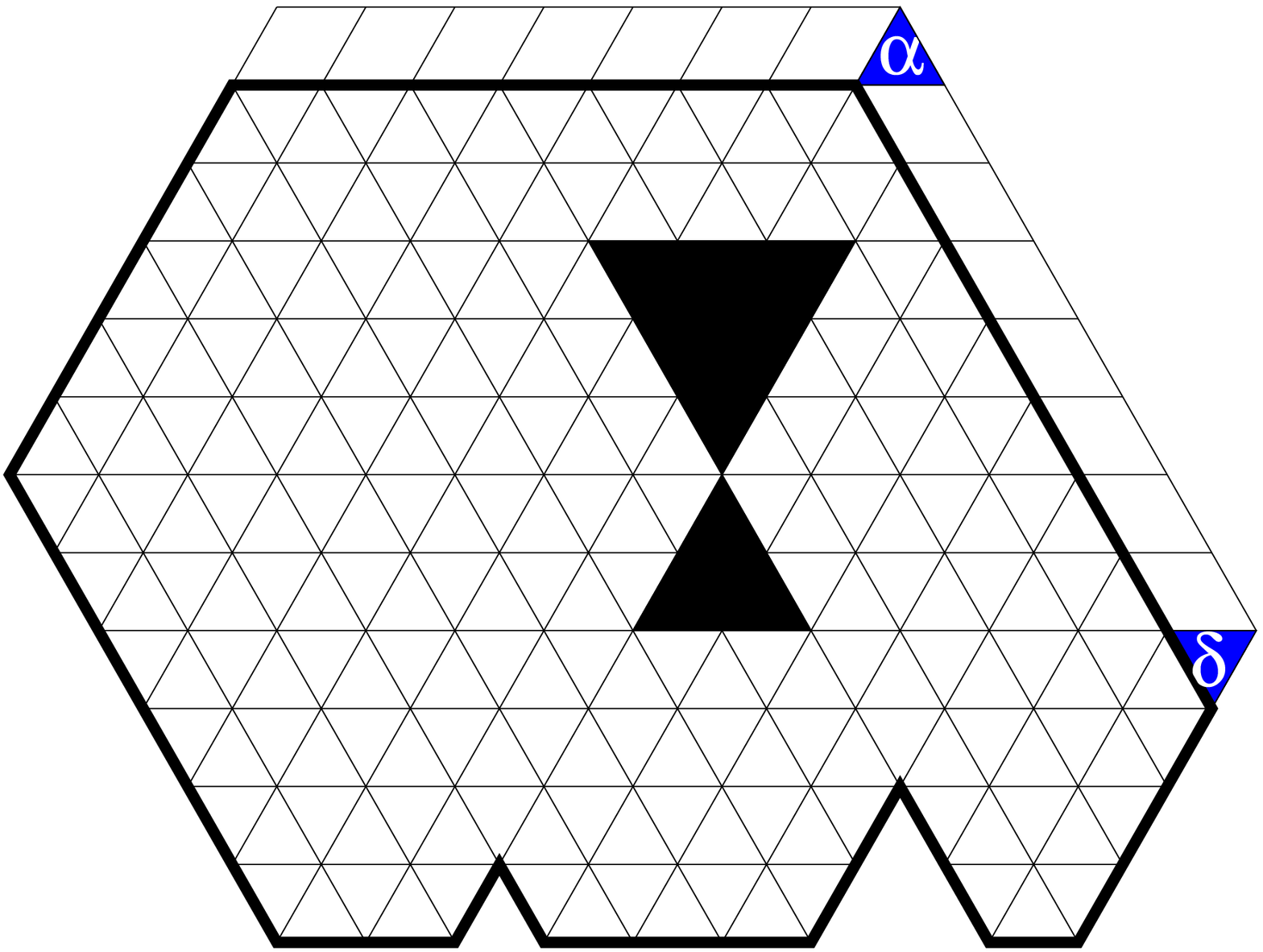}}
\hfill
}
%\vskip-0.1in
  \caption{\label{fic} Obtaining the recurrence \eqref{eic} for the based hourglass regions.}
%\vskip-0.15in
\end{figure}

Consider the hexagon $H$ cut out by the western angle of the outer hexagon and the western outside angle of the bowtie from the horizontal strip bounded by the base of the based hourglass region and the top of the bowtie. Suppose the bowtie touches the northwestern side. Then the top side of $H$ shrinks to a point, and since opposite sides of $H$ differ by the same amount, we obtain that $d-0=a-(y+a)$, which implies $y=0$. Therefore base case (4) reduces to base case (1). Base case (5) follows from Section 4, as the only way the bowtie can touch the southwestern or southeastern side is if it sits along the base and $b'=c'=0$.

Suppose $d=e=0$. Since the region $F_{0,0,f,x,y}(a,a',b',c')$ is by assumption tileable, it follows that the upward extension of the northeastern side of the $b'$-lobe crosses the northwestern side of the boundary (indeed, otherwise it would cross the interior of the top side, and lozenges forced by the forced lozenge at focal point $B$ would eventually leave a unit triangle in the northwestern corner that cannot be covered by a non-overlapping lozenge). An analogous statement holds for the $c'$-lobe. Therefore, the pattern of forced lozenges is as shown on the left in Figure \ref{fid}, and upon their removal one is left with an hourglass region. It is then not hard to see that equation \eqref{eia} follows from Proposition \ref{tha}.

\begin{figure}[h]
%\vskip0.15in
  \centerline{
\hfill
{\includegraphics[width=0.40\textwidth]{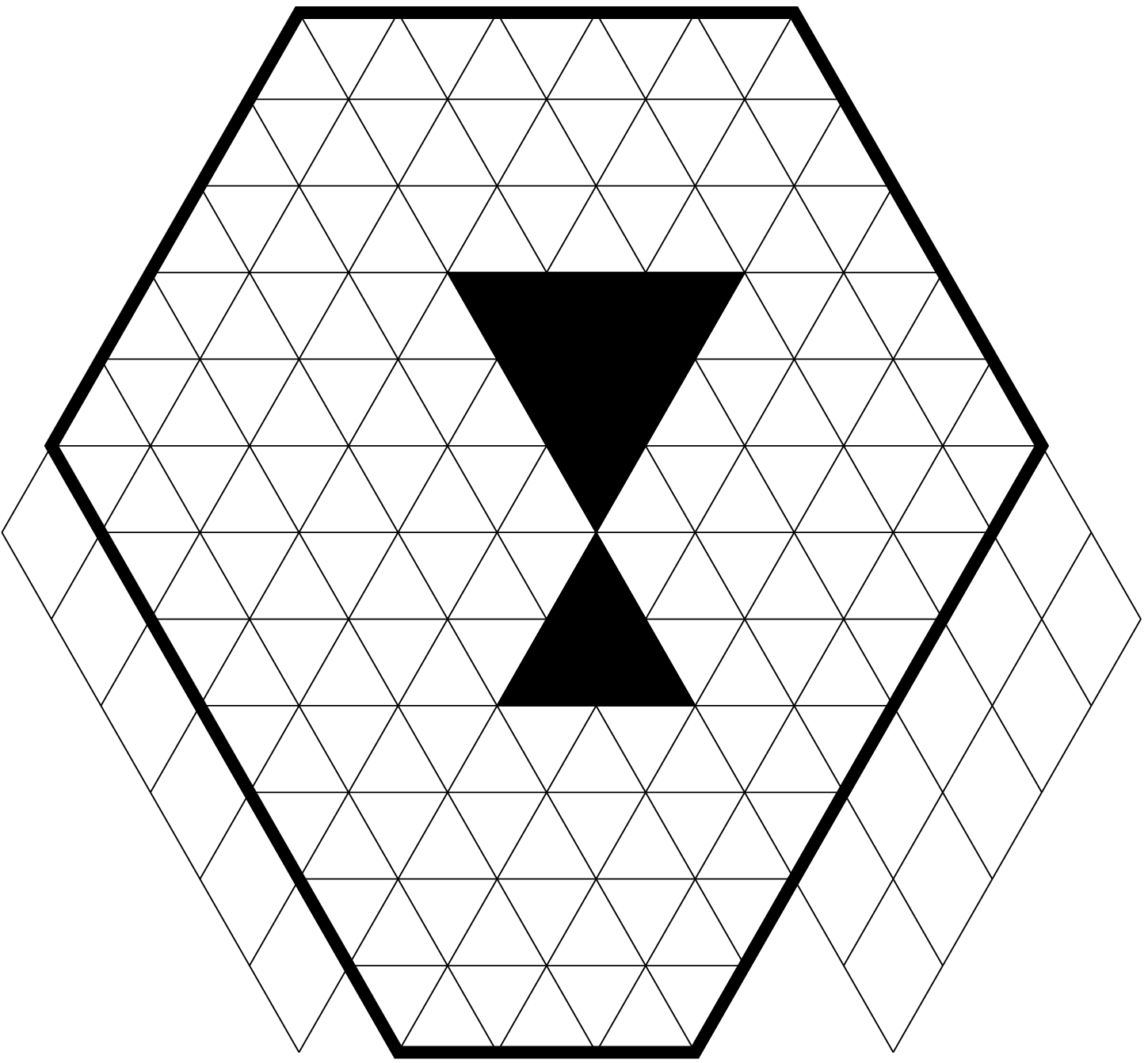}}
\hfill
{\includegraphics[width=0.40\textwidth]{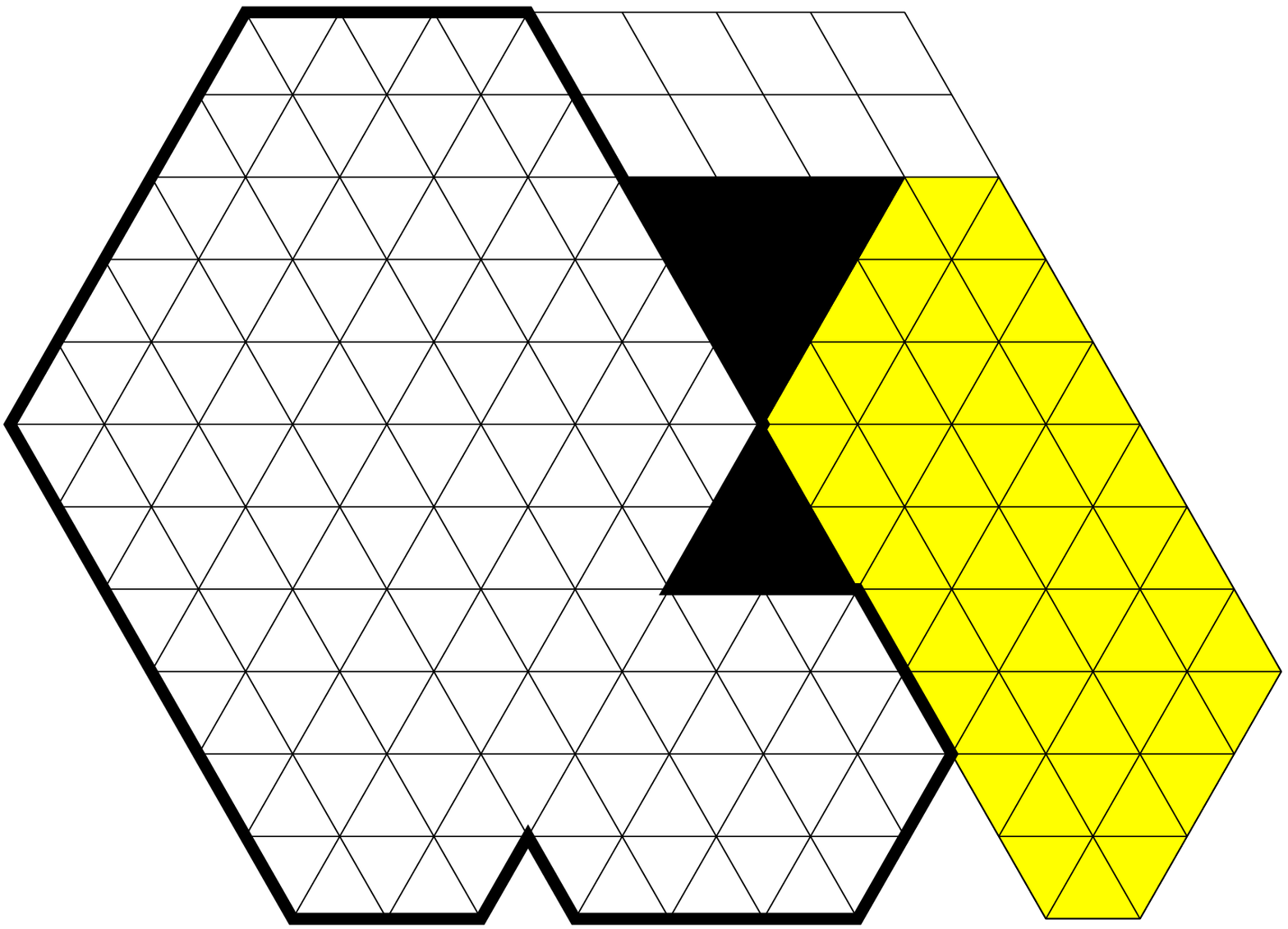}}
\hfill
}
%\vskip-0.1in
  \caption{\label{fid} The base cases $d=e=0$ (left) and $z=0$ (right).}
\vskip-0.10in
\end{figure}

\begin{figure}[h]
\vskip0.15in
  \centerline{
\hfill
{\includegraphics[width=0.40\textwidth]{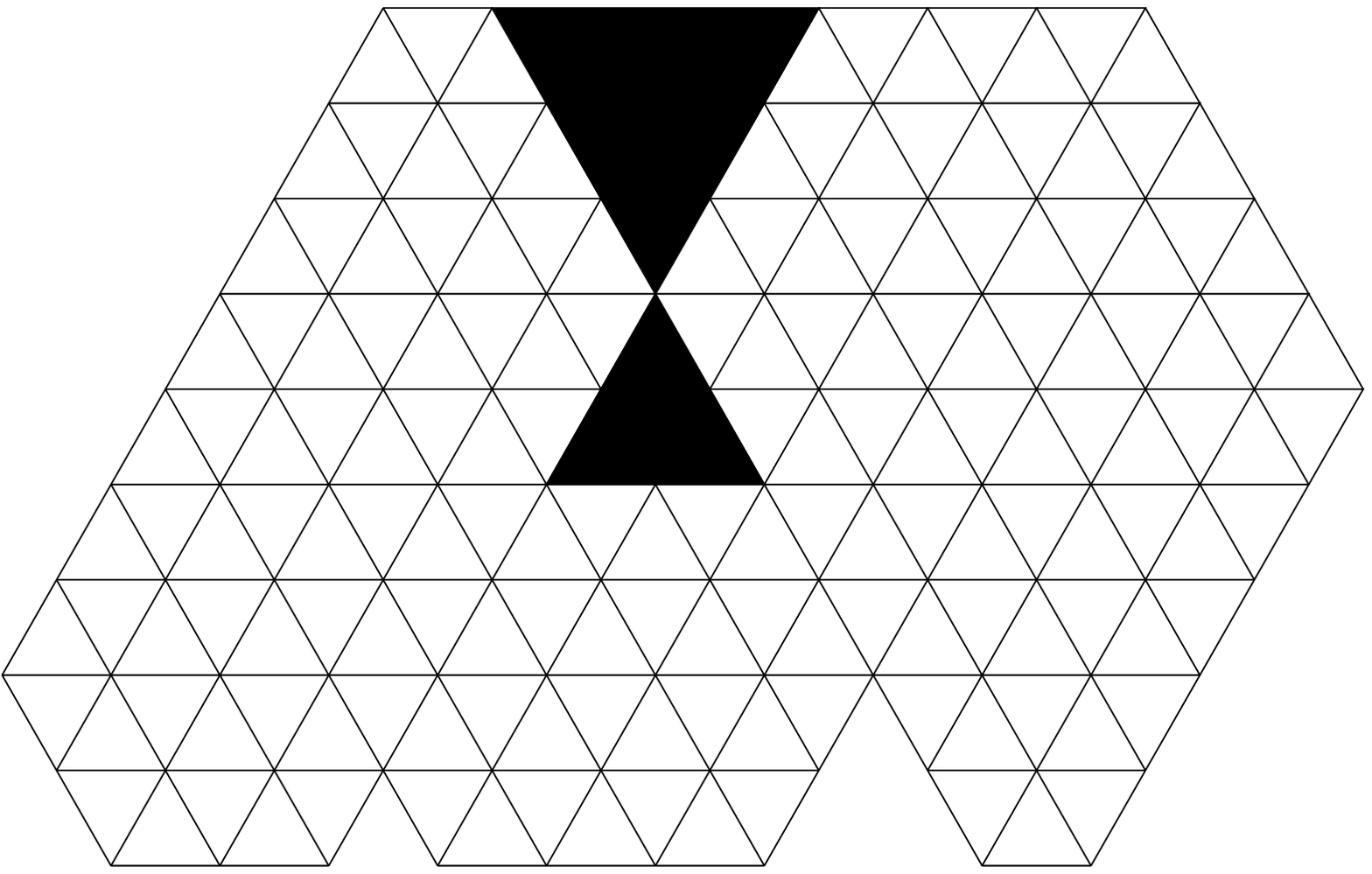}}
\hfill
{\includegraphics[width=0.40\textwidth]{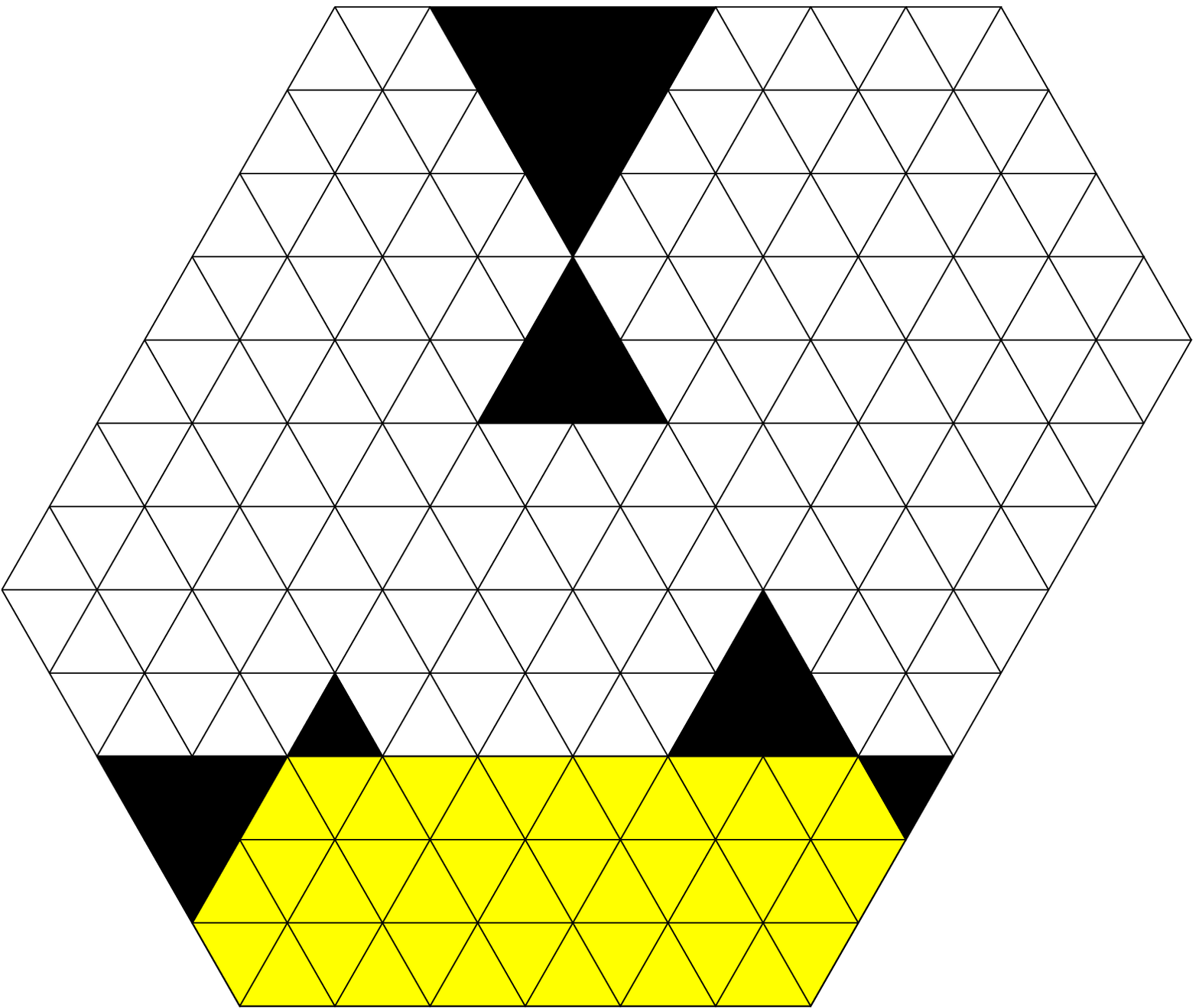}}
\hfill
}
%\vskip-0.1in
  \caption{\label{fie} The base case when the bowtie touches the top side (left). Extension to a region with three bowties touching alternate sides of a hexagon (right); the shaded hexagon is always internally tiled.}
\vskip-0.10in
\end{figure}

Consider now the base case (1). By symmetry, it is enough to treat the case $z=0$. It follows from the dimensions in the picture on the left in Figure \ref{fia} that in this case the shaded hexagon shown on the right in Figure \ref{fid} is always internally tiled. This in turn forces the indicated lozenges to be present in every tiling. The region obtained after removing the shaded hexagon and these forced tiles is a snowman region. Thus, the number of lozenge tilings of the based hourglass region is in this case equal to the product of the number of tilings of the shaded hexagon (which is given by \eqref{eaa}) and the number of tilings of a snowman region (which is given by Theorem \ref{teb}). Using the resulting formula for both regions on the left hand side of \eqref{eia}, one readily verifies that the resulting expression agrees with the formula on the right hand side of \eqref{eia}.

The remaining base case of our induction is the case when the bowtie touches the top side. Then the based hourglass region is as pictured on the left in Figure \ref{fie}. Extend it downwards by including below it a hexagon of side-lengths $f+a'+b'+c'$, $c$, $b$, $f+a'+b'+c'$, $c$, $b$ (clockwise from top), as shown on the right in Figure \ref{fie}. The resulting region is a triad hexagon with the bowties sharing an edge with the boundary, and its tilings are therefore enumerated by Lai's formula \cite{Lai3dent} presented here in Theorem \ref{3btouching}. Since the added hexagon is necessarily internally tiled, and the number of its tilings is given by formula \eqref{eaa}, this yields an explicit product formula for the number of lozenge tilings of the based hourglass region in this base case. Using this formula for the numerator and denominator on the left hand side of \eqref{eib}, one readily checks that the resulting expression agrees with the one on the right hand side of \eqref{eib}.

%\begin{figure}[h]
%%\vskip0.15in
%  \centerline{
%\hfill
%{\includegraphics[width=0.44\textwidth]{xis0.eps}}
%\hfill
%{\includegraphics[width=0.44\textwidth]{xis0bar.eps}}
%\hfill
%}
%%\vskip-0.1in
%  \caption{\label{ffa} An example of a triad hexagon $R$ with $x=0$ (left) and the corresponding reg%ion $\bar{R}$ (right).}
%%\vskip-0.15in
%\end{figure}

For the induction step, let $d,e,f,y,z$ be non-negative integers with $y,z\geq1$, at least one of $d$ and $e$ positive and $f<y+z$ (as mentioned at the beginning of this section, the latter condition is equivalent to stating that the bowtie does not touch the top side), so that the bowtie does not touch any of the sides above the base. Assume that~\eqref{eia} holds for all based hourglass regions whose $d$-, $e$-, $x$- and $y$-parameters add up to strictly less than $d+e+y+z$. We need to deduce that \eqref{eia} holds for $F_{d,e,f,y,z}(a,a',b',c')$. 

By the second pararaph following \eqref{eic}, we may assume without loss of generality that $d\geq1$. Under our assumptions, $F_{d-1,e,f,y,z-1}(a,a',b',c')$ is a well-defined based hourglass region. Therefore, as we have seen in the first paragraph of this proof, $\M(F_{d-1,e,f,y,z-1}(a,a',b',c'))>0$, and we can use identity \eqref{eic} to express  $\M(F_{d,e,f,y,z}(a,a',b',c'))$ in terms of tiling counts of five smaller based hourglass regions, for which the induction hypothesis applies. Using formula~\eqref{eia} for each of these five regions, we obtain this way an expression for $\M(F_{d,e,f,y,z}(a,a',b',c'))$. We need to show that this expression agrees with the one provided by equality \eqref{eia}.

Using Remark 3, after clearing denominators, the equality we need to check becomes an equality very similar to equation \eqref{eda}. The argument that proves \eqref{eda} (presented in detail in Section 9) is readilly seen to prove the needed equality as well. \epf

%\newpage

\section{Sphinx regions}

In this section we consider the family of regions $X_{d,x,y,z}(a,b,c,a',b',c')$ described in Figure \ref{fja}; we call them {\it sphinx regions}. The region $X_{d,x,y,z}(a,b,c,a',b',c')$ is defined for any non-negative integers $d$, $x$, $y$, $z$, $a$, $b$, $c$, $a'$, $b'$ and $c'$ satisfying $x\leq y+z$ (this is equivalent to the statement that the top of the bowtie is weakly below the top of the outer hexagon; this follows from  the readily checked fact that the focal distance is $x+d+a'+b'+c'$).

Given a sphinx region $X=X_{d,x,y,z}(a,b,c,a',b',c')$, it is not hard to see that its corresponding region $\thickbar{X}$ is the region described in Figure \ref{fjb}. 

\begin{prop}
\label{sphinx}
Let $d$, $x$, $y$, $z$, $a$, $b$, $c$, $a'$, $b'$ and $c'$ be non-negative integers with $x\leq y+z$, and consider the sphinx region $X=X_{d,x,y,z}(a,b,c,a',b',c')$ and its corresponding region $\thickbar{X}$. Then
\begin{align}
\frac{\M(X)}{\M(\thickbar{X})}
=
\dfrac
{\w^{(X)}\dfrac{\ka_A^{(X)}\ka_B^{(X)}\ka_C^{(X)}}{\ka_{BC}^{(X)}\ka_{AC}^{(X)}\ka_{AB}^{(X)}}}
{\w^{(\thickbar{X})}\dfrac{\ka_{A_0}^{(\thickbar{X})}\ka_{B_0}^{(\thickbar{X})}\ka_{C_0}^{(\thickbar{X})}}
{\ka_{B_0C_0}^{(\thickbar{X})}\ka_{A_0C_0}^{(\thickbar{X})}\ka_{A_0B_0}^{(\thickbar{X})}}},
\label{eja}
\end{align}
where the couples\footnote{ Recall that the couples $\ka$ are defined by equations (2.3)--(2.8).} $\ka$ in the numerator fraction refer to the region $X$ with focal points $A$, $B$, $C$ as indicated in Figure $\ref{fja}$, and the ones in the denominator fraction refer to the region $\thickbar{X}$ with focal points $A_0$, $B_0$, $C_0$ as indicated in Figure $\ref{fjb}$. Explicitly, we have
\begin{align}
\ka_A^{(X)}&=H(a+y+z-x)H(x+b+c+2d+a'+b'+c')
\label{ejaa}
\\
\ka_B^{(X)}&=H(b)H(x+z+d+a+c+a'+b'+c')
\label{ejab}
\\
\ka_C^{(X)}&=H(c)H(x+y+d+a+b+a'+b'+c')
\label{ejac}
\\
%\end{align}
%
%
%\begin{align}
\ka_{BC}^{(X)}&=H(y+z+d+a+a'+b'+c')H(d+b+c)
\label{ejad}
\\
\ka_{AC}^{(X)}&=H(x+d+b+a'+b'+c')H(z+a+c)
\label{ejae}
\\
\ka_{AB}^{(X)}&=H(x+d+c+a'+b'+c')H(y+a+b)
\label{ejaf}
\end{align}

\begin{figure}[h]
%\vskip0.15in
  \centerline{
\hfill
{\includegraphics[width=0.70\textwidth]{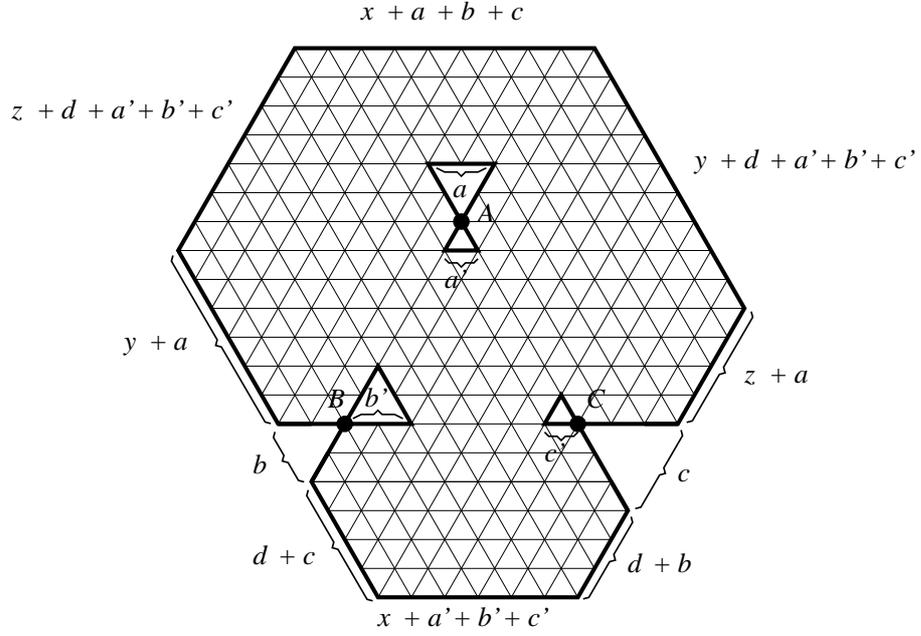}}
\hfill
%{\includegraphics[width=0.50\textwidth]{sphinx.eps}}
%\hfill
}
%\vskip-0.1in
  \caption{\label{fja} The sphinx region $X=X_{d,x,y,z}(a,b,c,a',b',c')$ for $d=1$, $x=2$, $y=4$, $z=2$, $a=2$, $b=2$, $c=3$, $a'=1$, $b'=2$, $c'=1$.}
\vskip-0.10in
\end{figure}

\begin{figure}[h]
\vskip0.15in
  \centerline{
\hfill
%{\includegraphics[width=0.50\textwidth]{sphinx.eps}}
%\hfill
{\includegraphics[width=0.70\textwidth]{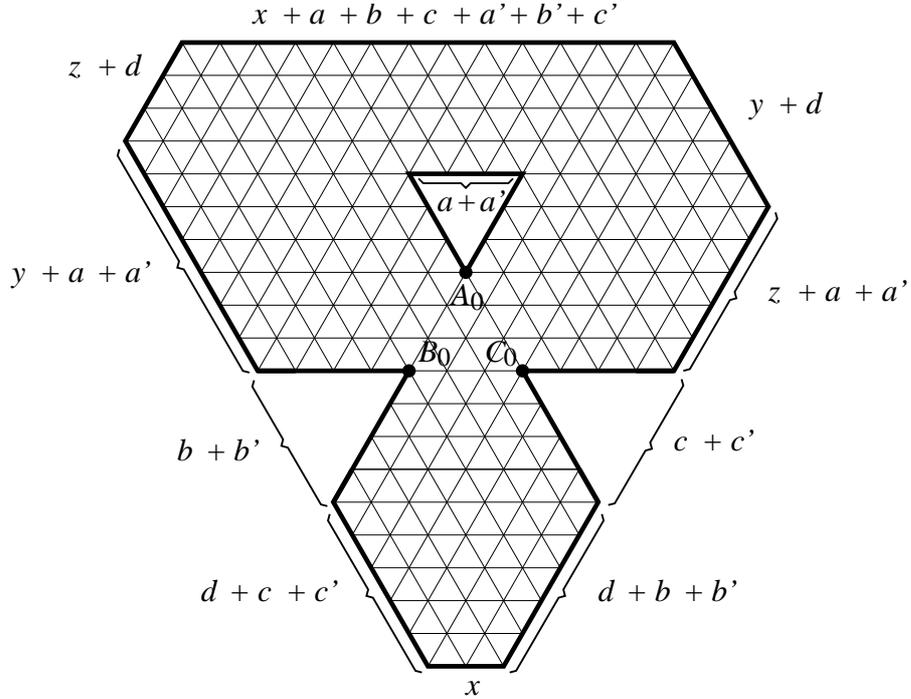}}
\hfill
}
\vskip-0.15in
  \caption{\label{fjb} The region $\bar{S}$ corresponding to the sphinx region $X$ in Figure \ref{fja}.}
%\vskip-0.10in
\end{figure}

and
\begin{align}
%\\
\ka_{A_0}^{(\thickbar{X})}&=H(x+2d+b+b'+c+c')H(y+z+a+a'-x)
\label{ejag}
\\
\ka_{B_0}^{(\thickbar{X})}&=H(b+b')H(x+z+d+a+a'+c+c')
\label{ejah}
\\
\ka_{C_0}^{(\thickbar{X})}&=H(c+c')H(x+y+d+a+a'+b+b')
\label{ejai}
%\end{align}
%
%
%
%\begin{align}
\\
\ka_{B_0C_0}^{(\thickbar{X})}&=H(d+b+b'+c+c')H(y+z+d+a+a')
\label{ejaj}
\\
\ka_{A_0C_0}^{(\thickbar{X})}&=H(x+d+b+b')H(z+a+a'+c+c')
\label{ejak}
\\
\ka_{A_0B_0}^{(\thickbar{X})}&=H(x+d+c+c')H(y+a+a'+b+b').
\label{ejal}
\end{align}

\end{prop}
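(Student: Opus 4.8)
\textbf{Proof proposal for Proposition \ref{sphinx}.}

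The plan is to follow exactly the same inductive architecture used for the hourglass regions (Proposition \ref{tha}) and the based hourglass regions (Proposition \ref{basedhg}): prove \eqref{eja} by induction on $x+y+z$ (or a similar size parameter), with Kuo condensation driving the induction step and a short list of degenerate configurations serving as base cases. First I would set up a Kuo condensation identity for the sphinx regions: choosing the four unit triangles $\alpha$, $\beta$, $\gamma$, $\delta$ on the outer face in the pattern used throughout (one near each of two adjacent corners and the opposite pair), I would derive a recurrence expressing $\M(X_{d,x,y,z}(a,b,c,a',b',c'))$ as a rational combination of $\M$-values of five sphinx regions with strictly smaller parameters, analogous to \eqref{ehb} and \eqref{eic}. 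As in the earlier propositions, this requires checking that all six regions are well-defined (which forces $y,z\ge 1$ and at least one of the relevant corner-parameters positive), that the forced-lozenge pattern in Kuo's figure actually occurs (which requires the bowties not to touch certain sides of the outer hexagon), and that the coefficient of $\M(X)$ in the identity is nonzero — the latter being immediate from Lemma \ref{tileability}, which guarantees every sphinx region is tileable.

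Next I would identify and dispatch the base cases. Following the template of Section 6, the degenerate configurations are: (1) $y=0$ or $z=0$; (2) a vanishing of the corner parameter(s) $d$ (or its mirror); (3) the top bowtie touching the top side of the outer hexagon (equivalently $x=y+z$); (4) the top bowtie touching the northwestern or northeastern side; and (5) one of the bottom bowties touching the southwestern or southeastern side. In each such case, forced lozenges peel off and the leftover region is one whose tiling count is already known: a hexagon-with-a-shamrock region (handled by Lai's formula, Theorem \ref{3btouching}), a magnet bar region (Theorem \ref{tea}), a snowman region (Theorem \ref{teb}), a based hourglass region (Proposition \ref{basedhg}), or an hourglass region (Proposition \ref{tha}), possibly times an internally-tiled hexagon whose count comes from \eqref{eaa}. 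For each base case I would substitute the explicit formulas \eqref{ejaa}--\eqref{ejal} for the couples of $X$ and $\thickbar{X}$ into \eqref{eja} and check that the known product formula for $\M(X)$ matches; these are finite verifications of the same type as in the earlier sections, reducing the touching/extremal cases to configurations already enumerated in the literature. Some of the touching cases may need a small extension-and-restriction trick (as in Figure \ref{fie}, where one extends the region by an internally-tiled hexagon to reach a shamrock hexagon) before Lai's formula applies.

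For the induction step proper, I would assume \eqref{eja} holds for all sphinx regions of smaller size, use the Kuo recurrence to express $\M(X)$ in terms of the five smaller sphinx regions, substitute the inductive formula (in the cleared-denominators form analogous to \eqref{ehe} and Remark 3, i.e. $\M(X)=\M(\thickbar X)\cdot[\text{the product formula}]$) for each, and verify that the resulting expression equals the right-hand side of \eqref{eja}. After clearing denominators this reduces to a polynomial identity among hyperfactorials $\h(\cdot)$ evaluated at linear forms in $d,x,y,z,a,b,c,a',b',c'$. Using the elementary shift relation $\h(n+1)/\h(n)=n!$, each such identity collapses to a finite rational-function identity in the parameters; this is precisely the kind of verification the authors announce they will carry out ``in great detail in Sections 8 and 9'' for Theorem \ref{tba}, and the text explicitly says that argument also proves the analogous identities needed for Propositions \ref{tha} and \ref{basedhg}. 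I would therefore invoke the same computation: the identity to be checked for the sphinx case is of the same shape as \eqref{eda}, and the argument proving \eqref{eda} proves it verbatim.

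The main obstacle I anticipate is bookkeeping rather than conceptual: one must (a) pin down the precise Kuo placement and the exact forced-lozenge pattern so that the recurrence has the clean five-term form, which in turn dictates the exact list of base cases; and (b) confirm that in every base case the leftover region after removing forced lozenges is genuinely one of the known families, with the parameters aligning so that the explicit couple formulas \eqref{ejaa}--\eqref{ejal} reproduce the known product formula. The final polynomial verification at the induction step, while lengthy, is mechanical once the shift relation for $\h$ is applied and is — by the paper's own remarks — the same computation as for Theorem \ref{tba}, so no new difficulty arises there.
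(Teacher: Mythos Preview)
Your proposal is correct and follows essentially the same architecture as the paper's proof: Kuo condensation drives an induction, the base cases reduce to the already-handled families (based hourglass via Proposition~\ref{basedhg}, snowman via Theorem~\ref{teb}, Lai's formula via Theorem~\ref{3btouching}, and the $x=0$ case of Section~7), and the induction-step identity is deferred to the same verification as \eqref{eda} in Section~9. The only noteworthy implementation details you have not pinned down are that the paper's chosen Kuo placement (Figure~\ref{fjc}) yields a recurrence \eqref{ejb} in which the lobe parameter $a'$ shifts to $a'+1$ in half the terms, the induction is run on $d+y$ rather than on $x+y+z$, and the base-case list is $d=0$, $x=0$, $y=0$ or $z=0$, $x=y+z$, and the bowtie touching the northwestern side---all of which fit your template once the specific placement is fixed.
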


\begin{figure}[h]
%\vskip0.15in
  \centerline{
\hfill
{\includegraphics[width=0.38\textwidth]{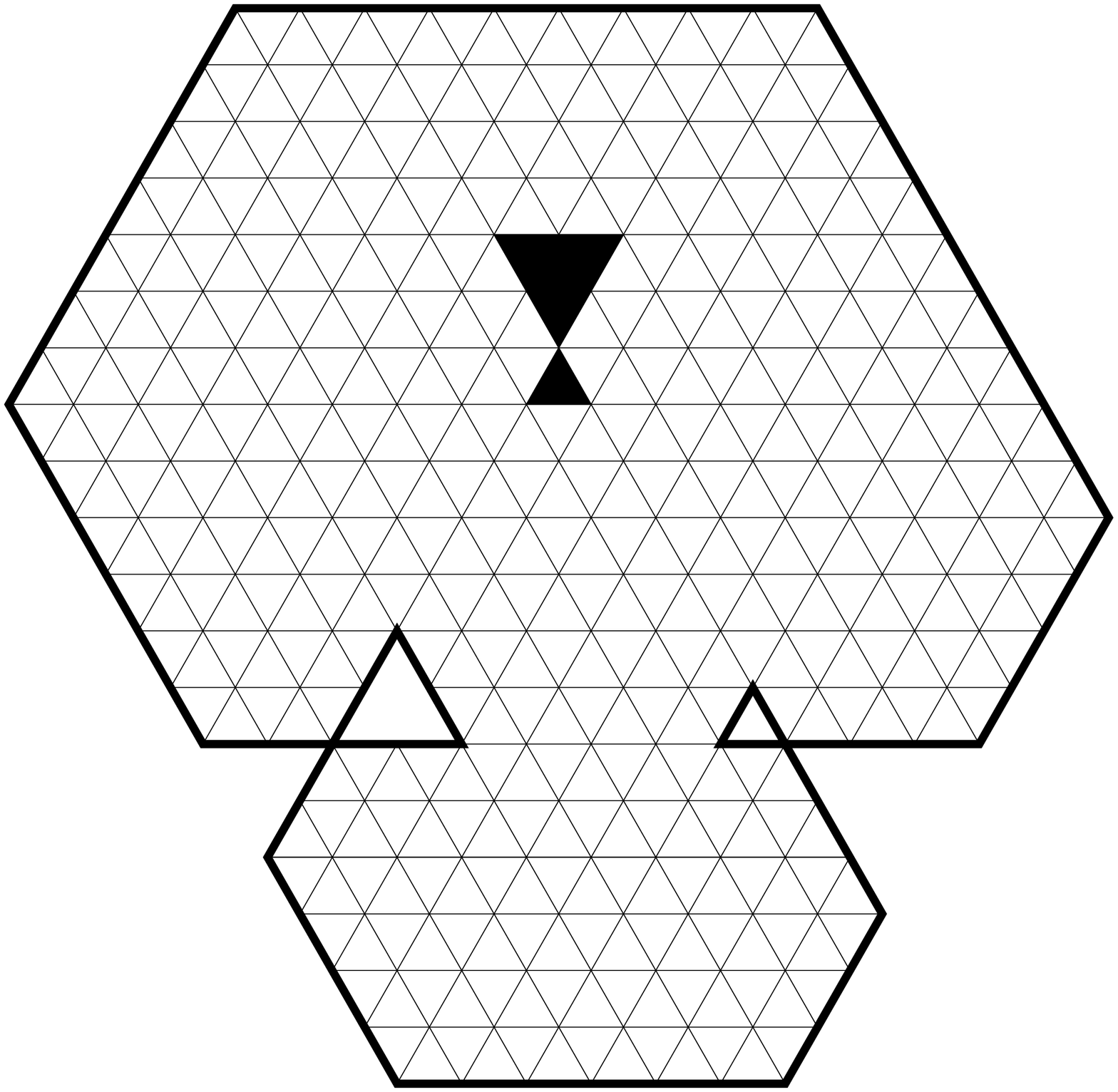}}
\hfill
{\includegraphics[width=0.38\textwidth]{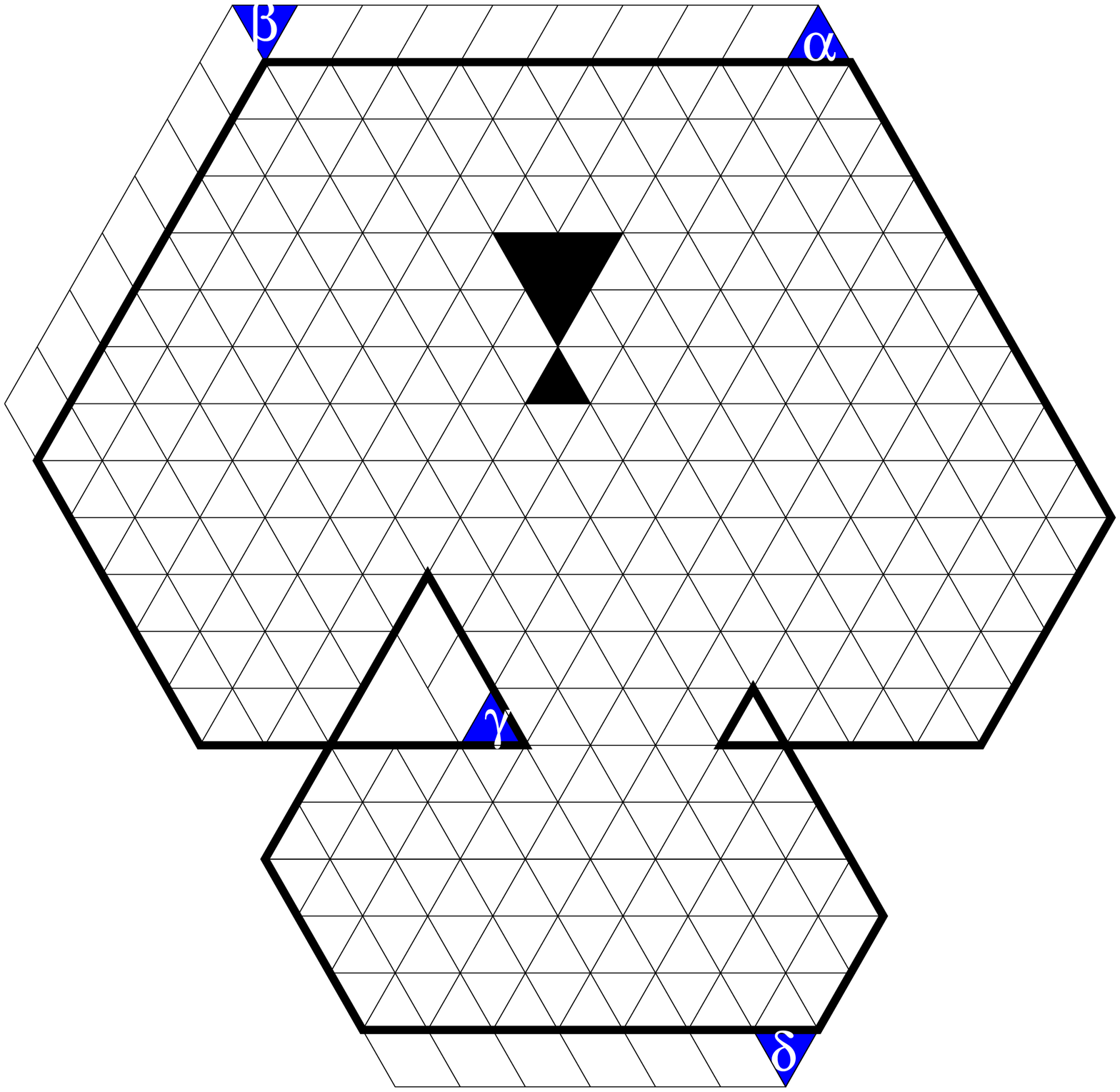}}
\hfill
}
\vskip0.2in
  \centerline{
\hfill
{\includegraphics[width=0.38\textwidth]{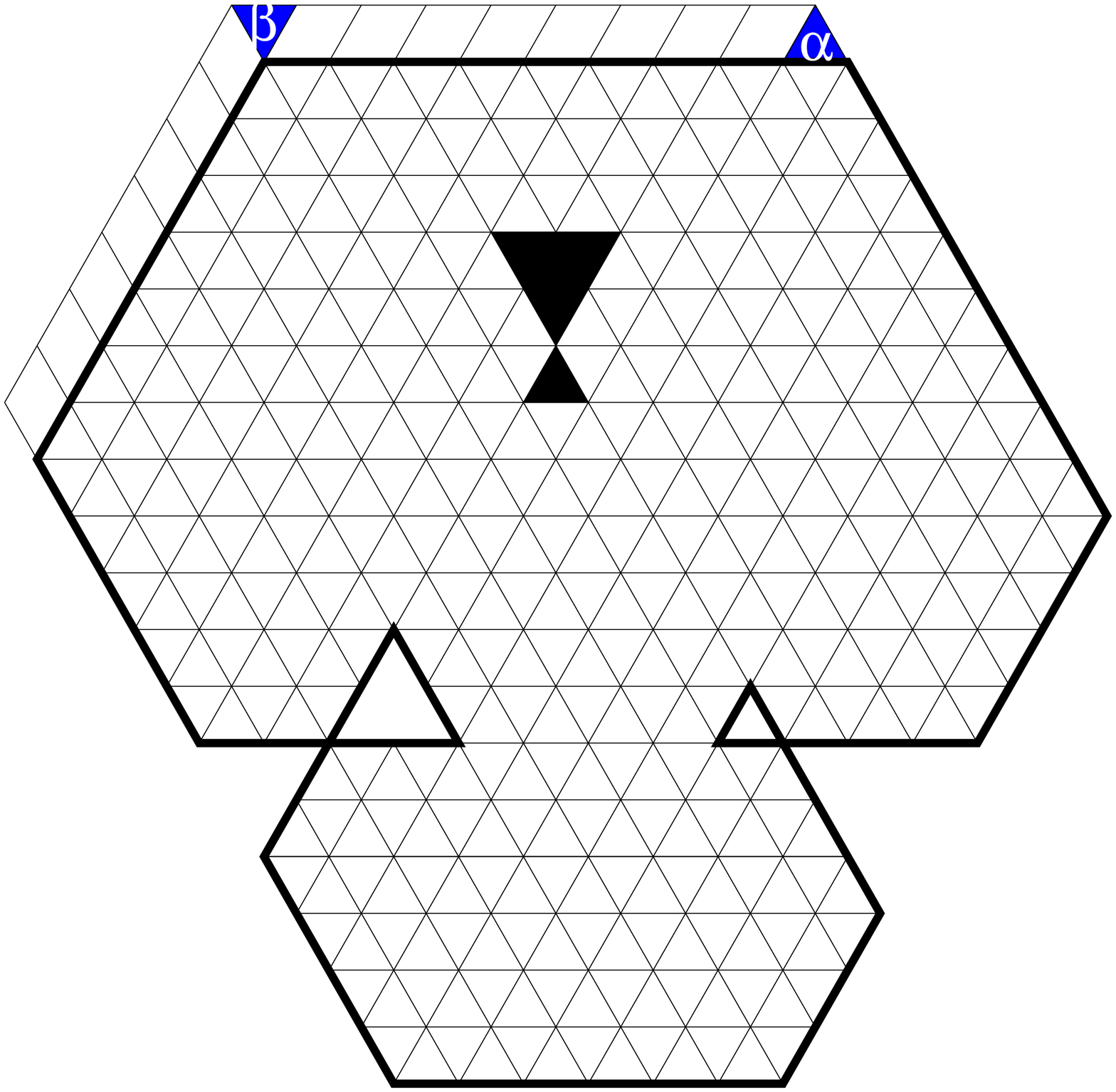}}
\hfill
{\includegraphics[width=0.38\textwidth]{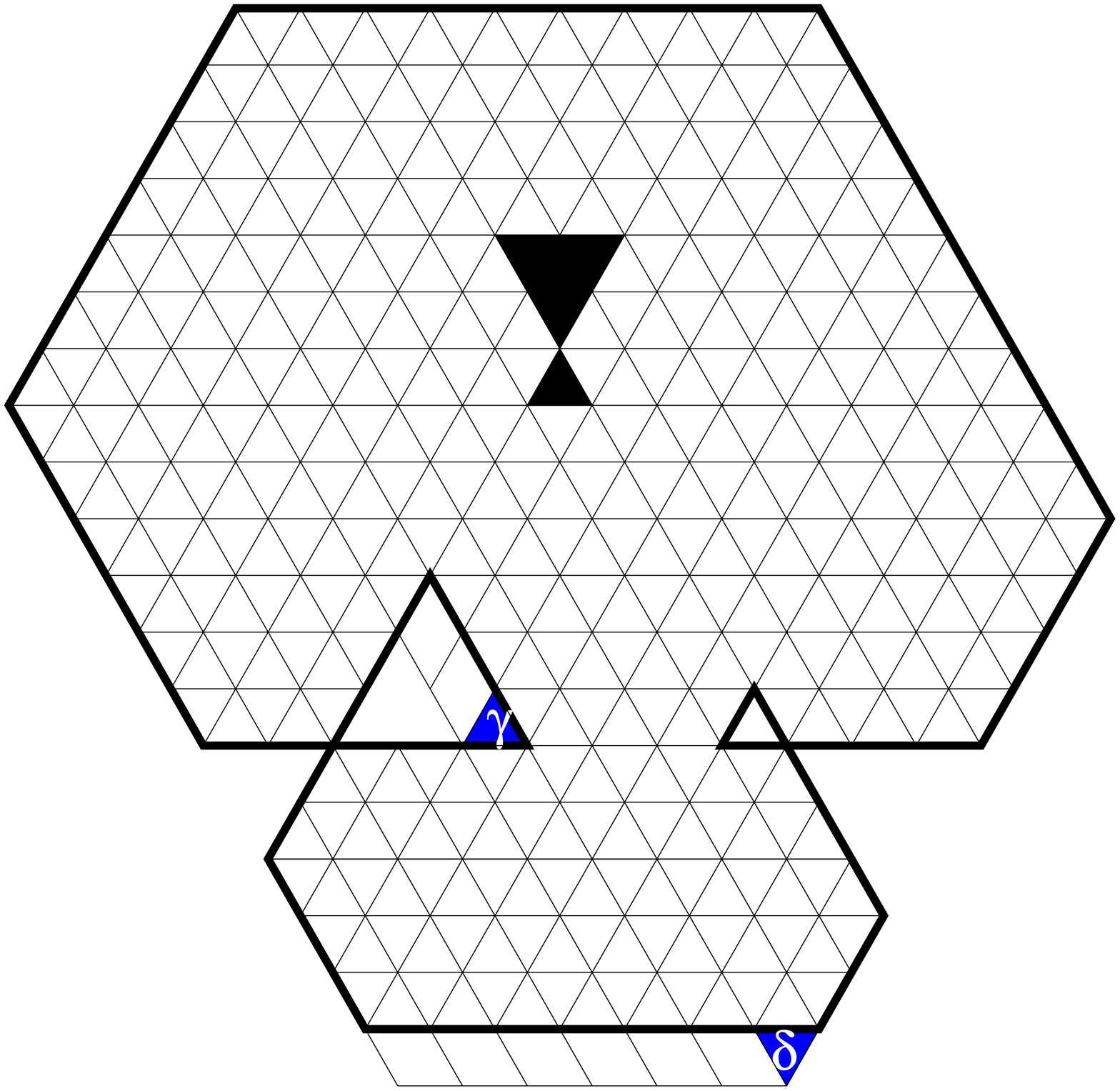}}
\hfill
}
\vskip0.2in
  \centerline{
\hfill
{\includegraphics[width=0.38\textwidth]{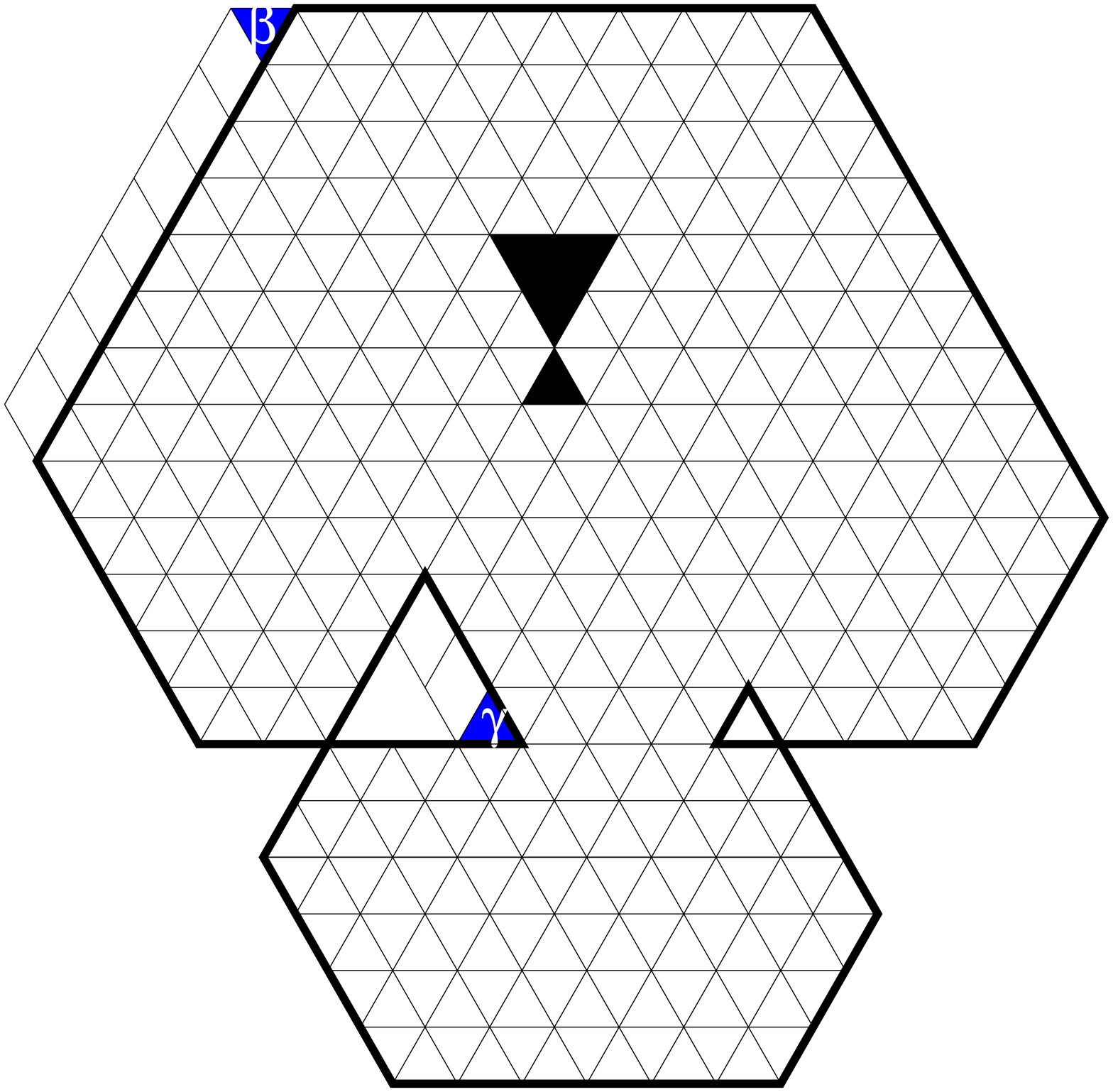}}
\hfill
{\includegraphics[width=0.38\textwidth]{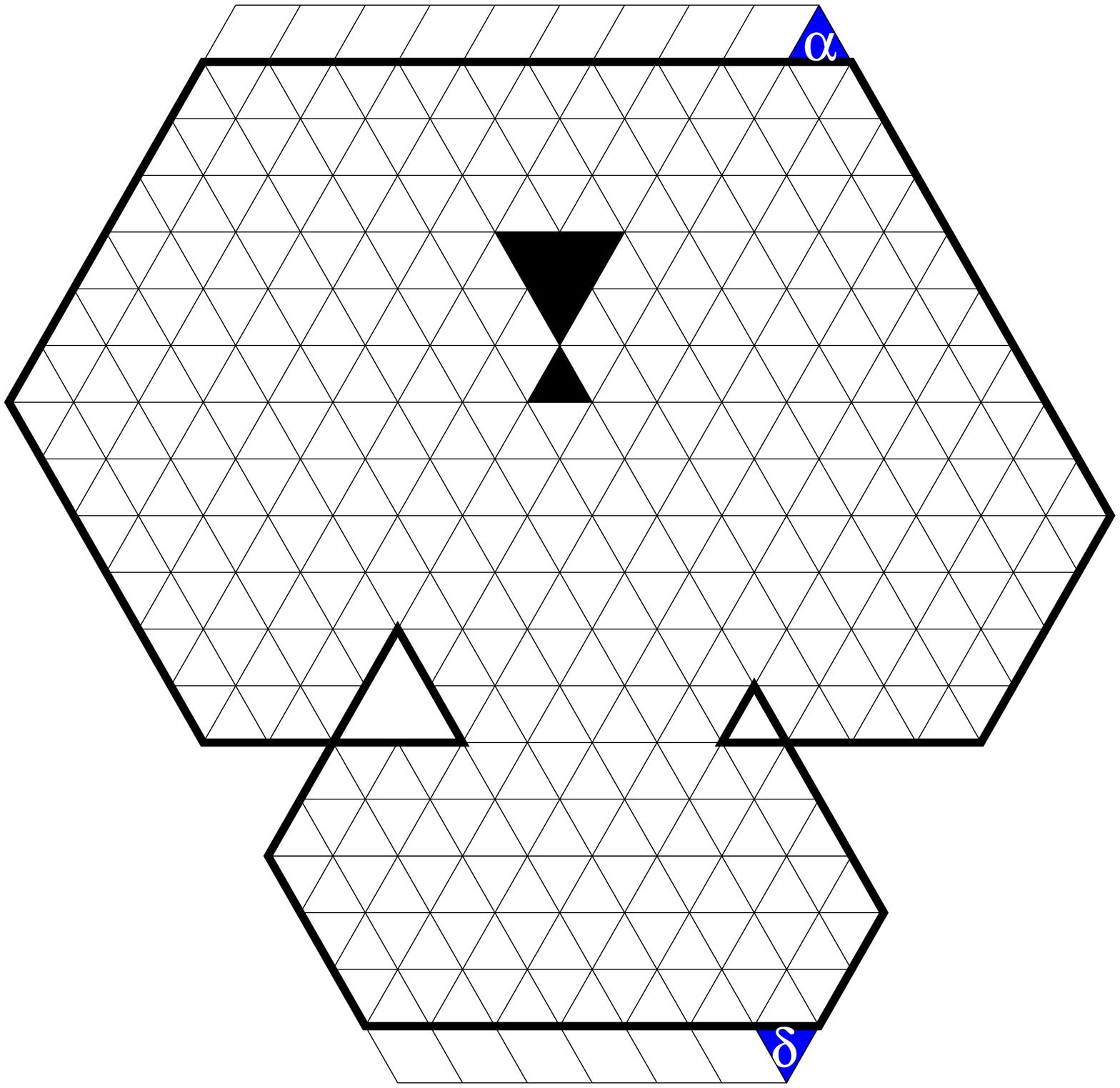}}
\hfill
}
%\vskip-0.1in
  \caption{\label{fjc} Obtaining the recurrence \eqref{ejb} for the sphinx regions.}
%\vskip-0.15in
\end{figure}

\begin{proof} When regarded as a triad hexagon, all three depths of a sphinx region are non-negative. Therefore, by Lemma \ref{tileability}, all sphinx regions are tileable. In particular, $\M(\thickbar{S})\neq0$, so the ratio on the left hand side of \eqref{eja} is well defined.

We prove the statement by induction, using Kuo condensation at the induction step. The picture on the top left in Figure \ref{fjc} shows the region $X_{d,x,y,z}(a,b,c,a',b',c')$. Choosing $\alpha$, $\beta$, $\gamma$ and $\delta$ as shown in the top right picture in Figure \ref{fjc}, and assuming the forced lozenges come in the pattern shown in Figure \ref{fjc}, we obtain
\begin{align}
&
\M(X_{d,x,y,z}(a,b,c,a',b',c'))\M(X_{d-1,x,y-1,z}(a,b,c,a'+1,b',c'))
=
\nonumber
\\[5pt]
&\ \ \ \ \ \ \ \ \ \ \ \ \ \ \ \ \ \ \ \ \ \ 
\M(X_{d,x,y-1,z}(a,b,c,a',b',c'))\M(X_{d-1,x,y,z}(a,b,c,a'+1,b',c'))
\nonumber
\\[5pt]
&\ \ \ \ \ \ \ \ \ \ \ \ \ \ \ \ \ \ \ \ \ \
+\M(X_{d,x-1,y-1,z}(a,b,c,a'+1,b',c'))\M(X_{d-1,x+1,y,z}(a,b,c,a',b',c')).
\label{ejb}
\end{align}  

\parindent0pt
Note that if $d,x,y,z\geq1$ and $x<y+z$, then all the regions involved in \eqref{ejb} are well defined.
Moreover, these inequalities imply that ``there is room'' for the unit triangles $\alpha$, $\beta$, $\gamma$  and $\delta$ shown in Figure \ref{fjc}: The sides of $X$ along which the unit triangles $\alpha$, $\beta$ and $\delta$ are taken are all positive under the assumed inequalities, and so is the distance between the $b'$- and $c'$-lobes (indeed, the latter is readily seen to be equal to $x+d$).
Furthermore, if we assume in addition that the bowtie does not touch the northwestern side, it is easy to see that these inequalities also imply that the pattern of forced lozenges is as shown in Figure \ref{fjc}, and therefore identity \eqref{ejb} holds. Since $X_{d-1,x,y-1,z}(a,b,c,a'+1,b',c')$ is a well defined sphinx region, and since all sphinx regions are tileable (see the first paragraph in this proof), it follows that we can use \eqref{ejb} to express $\M(X_{d,x,y,z}(a,b,c,a',b',c'))$ in terms of tilings counts of the other five sphinx regions. As the sum of the $d$- and $y$-parameters is strictly less in these five regions than in $X_{d,x,y,z}(a,b,c,a',b',c')$, we proceed by induction on $d+y$.

%(note that these imply that the lengths of the northeastern, northern, northern and northeastern sides in $F_{d,e,f,y,z}(a,a',b',c')$ are all positive, and thus there is room on them to accomodate the unit triangles $\alpha$, $\beta$, $\gamma$ and $\delta$ indiacted in Figure \ref{fic}). In order for the forced lozenges to be indeed as shown in Figure \ref{fic}, the bowtie must touch none of the top, northwestern or northeastern sides of $F_{d,e,f,y,z}(a,a',b',c')$. Furthermore, since we will use \eqref{eic} as a recurrence relation expressing $\M(F_{d,e,f,y,z}(a,a',b',c'))$ in terms of the other five tiling counts involved in \eqref{eic}, we need to make sure that its coefficient in \eqref{eic}, $\M(F_{d-1,e,f,y,z-1}(a,a',b',c'))$, is non-zero. Lemma \ref{tileability} implies that any based hourglass region is tileable, so for $d,z\geq1$ the coefficient of  $\M(F_{d,e,f,y,z}(a,a',b',c'))$ in \eqref{eic} is indeed non-zero.

\parindent15pt
Our base cases will be (1) $d=0$; (2) $x=0$; (3) $y=0$ or $z=0$, (4) $x=y+z$;
%, the case when the top of the bowtie is along the top side of the hexagon.
and (5)  the case when the bowtie touches the northwestern side of the hexagon.

In case (5), after removing the forced lozenges, one is left with a snowman region, and the statement follows by Theorem \ref{teb}. 

If $d=0$, the hexagonal sub-region of $X$ under the line $BC$ has opposite sides of equal lenths, and therefore must be internally tiled. Since the leftover region is a based hourglass region, this case follows from Proposition \ref{basedhg} and formula \eqref{eaa}.

For $y=0$, the side of $X$ of length $y+a$ matches the length of the left side of the $a$-lobe. This creates a hexagonal sub-region of $X$ in its western corner that must be internally tiled. In turn, this determines a parallelogram of forced lozenges
(this situation is similar to the one in the picture on the right in Figure \ref{fid}).
The region left after removing the internally tiled hexagon and the forced parallelogram is a special case of based hourglass regions. Thus this case also follows from Proposition \ref{basedhg} and formula \eqref{eaa}. The case $z=0$ follows by symmetry.

Base case (4) follows by Theorem \ref{3btouching}, and base case (2) by Section 7.

For the induction step, let $d,x,y,z\geq1$ be integers with $x<y+z$ (as mentioned at the beginning of this section, the latter condition is equivalent to stating that the bowtie does not touch the top side). Assume that~\eqref{eja} holds for all sphinx regions for which the sum of their $d$- and $y$-parameters is strictly less than $d+y$. We need to deduce that \eqref{eja} holds for $X_{d,x,y,z}(a,b,c,a',b',c')$. 

As described in the paragraph following \eqref{ejb}, express  $\M(X_{d,x,y,z}(a,b,c,a',b',c'))$ in terms of tiling counts of five smaller sphinx regions, for which the induction hypothesis applies. Using formula~\eqref{eja} for each of these five regions, we obtain this way an explicit expression for $\M(X_{d,x,y,z}(a,b,c,a',b',c'))$. We need to show that this expression agrees with the one provided by equality \eqref{eja}.

After clearing denominators, the equality we need to check becomes an equality very similar to equation \eqref{eda}. The argument that proves \eqref{eda} (presented in detail in Section 9) is readilly seen to prove the needed equality as well. 
\end{proof}

\begin{figure}[h]


%\vskip0.15in
  \centerline{
\hfill
{\includegraphics[width=0.44\textwidth]{xis0.eps}}
\hfill
{\includegraphics[width=0.44\textwidth]{xis0bar.eps}}
\hfill
}
%\vskip-0.1in
  \caption{\label{ffa} An example of a triad hexagon $R$ with $x=0$ (left) and the corresponding region $\bar{R}$ (right).}
%\vskip-0.15in
\end{figure}

\section{The case $x=0$}

%\begin{figure}[h]
%%\vskip0.15in
%  \centerline{
%\hfill
%{\includegraphics[width=0.44\textwidth]{xis0.eps}}
%\hfill
%{\includegraphics[width=0.44\textwidth]{xis0bar.eps}}
%\hfill
%}
%%\vskip-0.1in
%  \caption{\label{ffa} An example of a triad hexagon $R$ with $x=0$ (left) and the corresponding reg%ion $\bar{R}$ (right).}
%%\vskip-0.15in
%\end{figure}

In this section we prove the special case of Theorem \ref{tba} when $x=0$ and $Q=\thickbar{R}$. We need to prove that
\begin{equation}
\label{efa}
\frac{\M(R)}{\M(\thickbar{R})}=\dfrac
{\w^{(R)}\dfrac{\ka_A^{(R)}\ka_B^{(R)}\ka_C^{(R)}}{\ka_{BC}^{(R)}\ka_{AC}^{(R)}\ka_{AB}^{(R)}}}
{\w^{(\thickbar{R})}\dfrac{\ka_{A_0}^{(\thickbar{R})}\ka_{B_0}^{(\thickbar{R})}\ka_{C_0}^{(\thickbar{R})}}{\ka_{B_0C_0}^{(\thickbar{R})}\ka_{A_0C_0}^{(\thickbar{R})}\ka_{A_0B_0}^{(\thickbar{R})}}},
\end{equation}

\parindent0pt
where $\thickbar{R}$ is obtained from $R$ by squeezing out completely all three bowties (see Section 2 for the definition), and $A_0$, $B_0$ and $C_0$ are its focal points.

\parindent15pt
Suppose $x=0$, and consider the triad hexagon $R=R_{0,y,z}^{A,B.C}(a,b,c,a',b',c')$ (an example is shown on the left in Figure \ref{ffa}). Consider also the region $\thickbar{R}$ obtained from $R$ by completely squeezing out its three bowties (see the picture on the right in Figure \ref{ffa}). 

Due to the fact that in this case the length of the top side of $R$ is $a+b+c$, in any tiling of $R$, the paths of lozenges that start upward from the lobes of sizes $a$, $b$ and $c$ are all the paths that end on the top side. This implies that the top shaded hexagon $I$ in Figure~\ref{ffa} is always internally tiled.

Since the length of the bottom side of $R$ is $a'+b'+c'$, the same argument shows that the bottom shaded hexagon $S$ on the left in Figure \ref{ffa} must also be internally tiled. Since the lozenge tiling is forced on the leftover portion of $R$ (see Figure \ref{ffa}), it follows that
\begin{equation}
\M(R)=\M(I)\M(S).
\label{efb}  
\end{equation}
The same argument shows that
\begin{equation}
\M(\thickbar{R})=\M(I')\M(S'),
\label{efc}  
\end{equation}
where $S'$ and $I'$ are the top and bottom shaded regions on the right in Figure \ref{ffa}, respectively.

Combining the above two equations gives
\begin{equation}
\frac{\M(R)}{\M(\thickbar{R})}=\frac{\M(I)}{\M(I')}\frac{\M(S)}{\M(S')}.
\label{efd}  
\end{equation}
Express both $\M(I)$ and $\M(I')$ using the formula of Theorem \ref{tea}. We claim that the resulting $P$-parts from the right hand side of \eqref{eea} are equal, and thus cancel out in the fraction $\M(I)/\M(I')$ in \eqref{efd}.

To see this, note that, when $I$ is obtained from $R$ as the top shaded region on the left in Figure \ref{ffa}, we have the following interpretation for the quantities in the arguments of $P$ in \eqref{eea}:
\begin{align}
x&=(\ell(NW)-f)-\de(b\textrm{-lobe},SW) 
\label{efea}
\\
y&=(\ell(NE)-f)-\de(c\textrm{-lobe},SE) 
\label{efeb}
\\
a+b+c+m&=\ell(N)+f,
\label{efec}
\end{align}
where $\ell(s)$ denotes the length of the side $s$ of the outer hexagon, and all things on the right hand sides refer to the region $R$. 

The same realization shows that, when the region $I$ in \eqref{eea} is obtained from $\thickbar{R}$ as the top shaded region on the right in Figure \ref{ffa}, the quantities $x$, $y$ and $a+b+c+m$ in the resulting arguments of $P$ have the same interpretation \eqref{efea}--\eqref{efec}, with the only difference that now the things on the right hand sides of \eqref{efea}--\eqref{efec} refer to the region $\thickbar{R}$.

However, it is a consequence of our definition of the bowtie squeezing operation that the quantities $\ell(NW)-f$, $\ell(N)+f$, $\ell(NE)-f$, as well as the distance of an outer lobe to the side facing it away from the other two bowties are invariant under bowtie squeezing. This proves our claim.

A similar argument proves the analogous claim that, when we express both $\M(S)$ and $\M(S')$ using the formula of Theorem \ref{teb}, the resulting $P'$-parts from the right hand side of \eqref{eeb} are equal, and thus cancel out in the fraction $\M(S)/\M(S')$ in \eqref{efd}. To see this, the needed analogs of \eqref{efea}--\eqref{efec} are
\begin{align}
k&=f-a'-b'-c'  
\label{efeea}
\\
x+b&=\de(b'\textrm{-lobe},SW)
\label{efeeb}
\\
y+c&=\de(c'\textrm{-lobe},SE).
\label{efeec}
\end{align}
%
%where all things on the right hand sides refer to the region $R$.
Since both the difference between the focal length and the sum of the inner lobes and the distance between an inner lobe and the side facing it through the other two bowties are invariant under bowtie squeezing, this proves our second claim.

Using the above two claims, we obtain from \eqref{efd} and Theorems \ref{tea} and \ref{teb} that
\begin{equation}
\frac{\M(R)}{\M(\thickbar{R})}=  
\frac
{\w^{(I)}\dfrac{\ka_A^{(I)}\ka_B^{(I)}\ka_C^{(I)}}{\ka_{BC}^{(I)}\ka_{AC}^{(I)}\ka_{AB}^{(I)}}}
{\w^{(I')}\dfrac{\ka_{A_0}^{(I')}\ka_{B_0}^{(I')}\ka_{C_0}^{(I')}}{\ka_{B_0C_0}^{(I')}\ka_{A_0C_0}^{(I')}\ka_{A_0B_0}^{(I')}}}
\,
\frac
{\w^{(S)}\dfrac{\ka_A^{(S)}\ka_B^{(S)}\ka_C^{(S)}}{\ka_{BC}^{(S)}\ka_{AC}^{(S)}\ka_{AB}^{(S)}}}
{\w^{(S')}\dfrac{\ka_{A_0}^{(S')}\ka_{B_0}^{(S')}\ka_{C_0}^{(S')}}{\ka_{B_0C_0}^{(S')}\ka_{A_0C_0}^{(S')}\ka_{A_0B_0}^{(S')}}}
\label{eff}
\end{equation}
By the definition \eqref{ebb} of the weight $\w$, we have from the picture on the left in Figure \ref{ffa}
\begin{align}
&  
\w^{(I)}\w^{(S)}=\frac{\h(f)^4\h(a)\h(0)\h(0)\h(f)\h(0)\h(0)}{\h(f+a)\h(f+0)\h(f+0)\h(f-f)\h(f-0)\h(f-0)}
\\[10pt]
&\ \ \ \ \ \ \ \ \ \ \ \ \ \ \ \ \ \ \ \ \ \ \ \ \ \ \ 
\times
\frac{\h(f)^4\h(0)\h(0)\h(0)\h(a')\h(b')\h(c')}{\h(f+0)\h(f+0)\h(f+0)\h(f-a')\h(f-b')\h(f-c')}
\\[10pt]
&\ \ \ \ \ \ \ \ \ \ \ \ \ \ \ \ \ \ \ \ \ \ \ \ \ \ \ 
=
\frac{\h(f)^2\h(a)\h(a')\h(b')\h(c')}{\h(f+a)\h(f-a')\h(f-b')\h(f-c')}.
\label{efg}
\end{align}
Similarly, we obtain
\begin{align}
\frac{\ka_A^{(I)}}{\ka_{BC}^{(I)}}
\,
\frac{\ka_A^{(S)}}{\ka_{BC}^{(S)}}
&=
\frac{\h(\de(A,N))\h(f)}{\h(\de(BC,N))\h(0)}
\,
\frac{\h(0)\h(\de(A,S))}{\h(f)\h(\de(BC,S)}
\label{efia}
\\[10pt]
\frac{\ka_B^{(I)}}{\ka_{AC}^{(I)}}
\,
\frac{\ka_B^{(S)}}{\ka_{AC}^{(S)}}
&=
\frac{\h(\de(B,NE))\h(b)}{\h(\de(AC,NE))\h(f+b)}
\,
\frac{\h(f)\h(\de(B,SW))}{\h(0)\h(\de(AC,SW)}
\label{efia}
\\[10pt]
\frac{\ka_C^{(I)}}{\ka_{AB}^{(I)}}
\,
\frac{\ka_C^{(S)}}{\ka_{AB}^{(S)}}
&=
\frac{\h(\de(C,NW))\h(c)}{\h(\de(AB,NW))\h(f+c)}
\,
\frac{\h(f)\h(\de(C,SE))}{\h(0)\h(\de(AB,SE)}.
\label{efia}
\end{align}
Thus we obtain
\begin{align}
&
{\w^{(I)}\dfrac{\ka_A^{(I)}\ka_B^{(I)}\ka_C^{(I)}}{\ka_{BC}^{(I)}\ka_{AC}^{(I)}\ka_{AB}^{(I)}}}
{\w^{(S)}\dfrac{\ka_A^{(S)}\ka_B^{(S)}\ka_C^{(S)}}{\ka_{BC}^{(S)}\ka_{AC}^{(S)}\ka_{AB}^{(S)}}}  
\nonumber
\\[10pt]
&\ \ 
=
\frac{\h(f)^4\h(a)\h(b)\h(c)\h(a')\h(b')\h(c')}{\h(f+a)\h(f+b)\h(f+c)\h(f-a)\h(f-b)\h(f-c)}
\nonumber
\\[10pt]
&\ \ \ \
\times
\frac{\h(\de(A,N))\h(\de(A,S))}{\h(\de(BC,N))\h(\de(BC,S))}
\frac{\h(\de(B,NE))\h(\de(B,SW))}{\h(\de(AC,NE))\h(\de(AC,SW))}
\frac{\h(\de(C,NW))\h(\de(C,SE))}{\h(\de(AB,NW))\h(\de(AB,SE))}
\nonumber
\\[10pt]
&\ \ \ \ 
=
\w^{(R)}\dfrac{\ka_A^{(R)}\ka_B^{(R)}\ka_C^{(R)}}{\ka_{BC}^{(R)}\ka_{AC}^{(R)}\ka_{AB}^{(R)}}.
\label{efj}
\end{align}  
This shows that the product of the numerators on the right hand side of \eqref{eff} is equal to the numerator on the right hand side of \eqref{efa}. The very same argument implies that also the product of the denominators on the right hand side of \eqref{eff} is equal to the denominator on the right hand side of \eqref{efa}. Therefore \eqref{efa} follows from \eqref{eff}.

%Notice that both $H_1$ and $H_2$ can be regarded as triad hexagons, with the same focal points $A$, $B$, $C$ as $R$: For $H_1$, the removed bowties are of type $(a,a')$, $(0,0)$ and $(0,0)$, while for $H_2$ --- whose top side has length zero! --- they are of type $(0,a')$, $(0,b')$ and $(0,c')$ (counterclockwise from top).

%Note also that $H_1$ is an instance of a ``magnet bar region,'' a family of regions whose number of tilings was determined in \cite{ff}. Furthermore, $H_2$ is a special case of a ``doubly intruded hexagon,'' a class of regions whose tilings were enumerated in \cite{2i}. One readily checks that the explicit product formulas given in \cite{ff} and \cite{2i} for $H_1$ and $H_2$ agree with the conceptual formula~\eqref{ebe}.

\bigskip

\section{Proof of Theorem \ref{tba}}

\bigskip

%Our proof is based on Kuo's graphical condensation method (see \cite{Kuo}). For ease of reference, w%e state below the particular instance of Kuo's general results that we need for our proofs (which is% Theorem~2.1 in \cite{Kuo}).
%
%\begin{theo} \cite[Theorem 2]{Kuo}
%\label{tca}
%Let $G=(V_1,V_2,E)$ be a plane bipartite graph in which $|V_1|=|V_2|$. Let vertices $\alpha$, $\beta$, $\gamma$ and $\delta$ appear cyclically on a face of $G$. If $\alpha,\gamma\in V_1$ and $\beta,\delta\in V_2$, then
%%
%\begin{equation}
%  \label{eca}
%\M(G)\M(G-\{\alpha,\beta,\gamma,\delta\})=\M(G-\{\alpha,\beta\})\M(G-\{\gamma,\delta\})+\M(G-\{\alpha,\delta\})\M(G-\{\beta,\gamma\}).
%\end{equation}
%%
%\end{theo}

\begin{figure}[h]
%\vskip0.15in
  \centerline{
\hfill
{\includegraphics[width=0.41\textwidth]{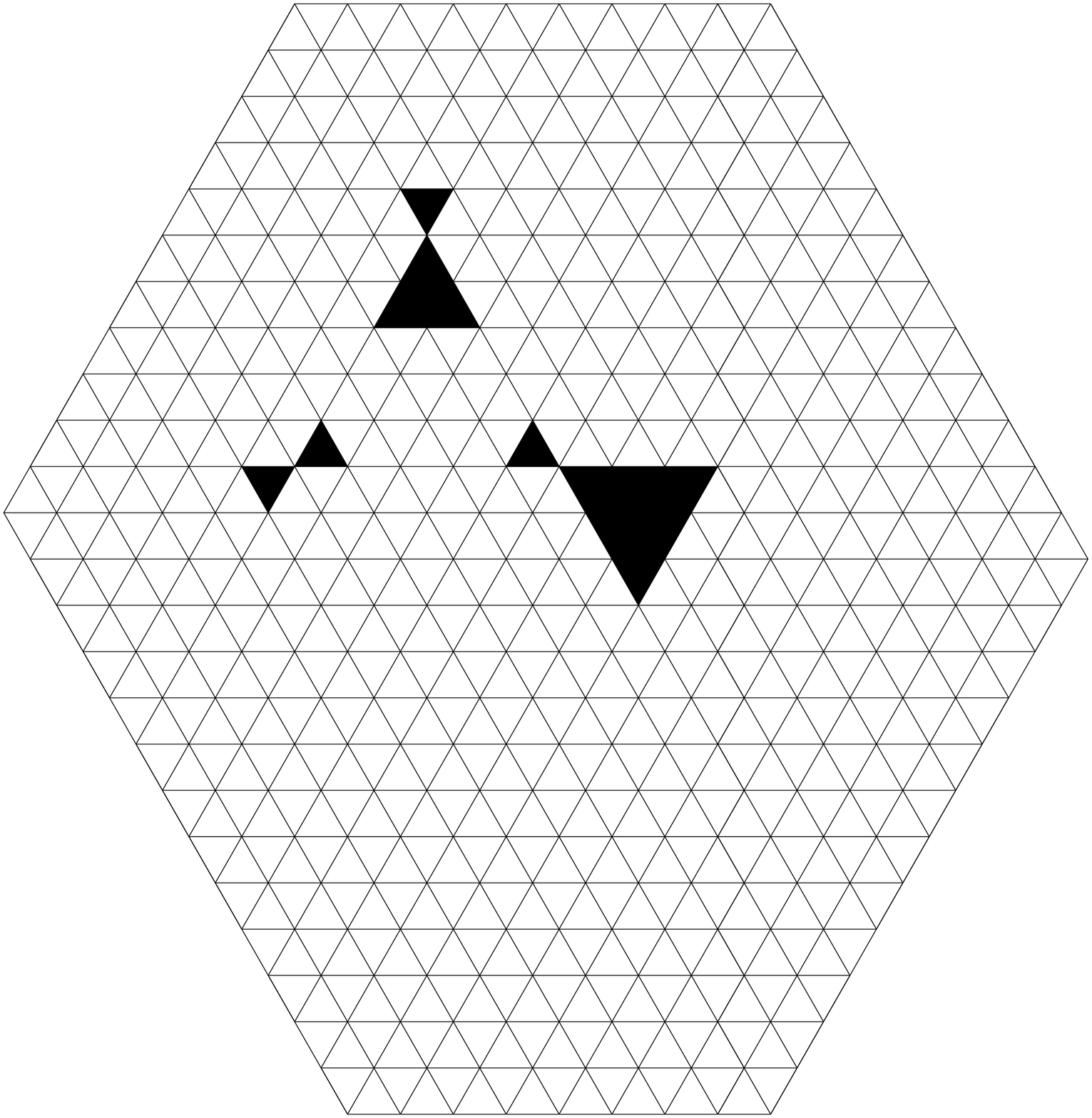}}
\hfill
{\includegraphics[width=0.41\textwidth]{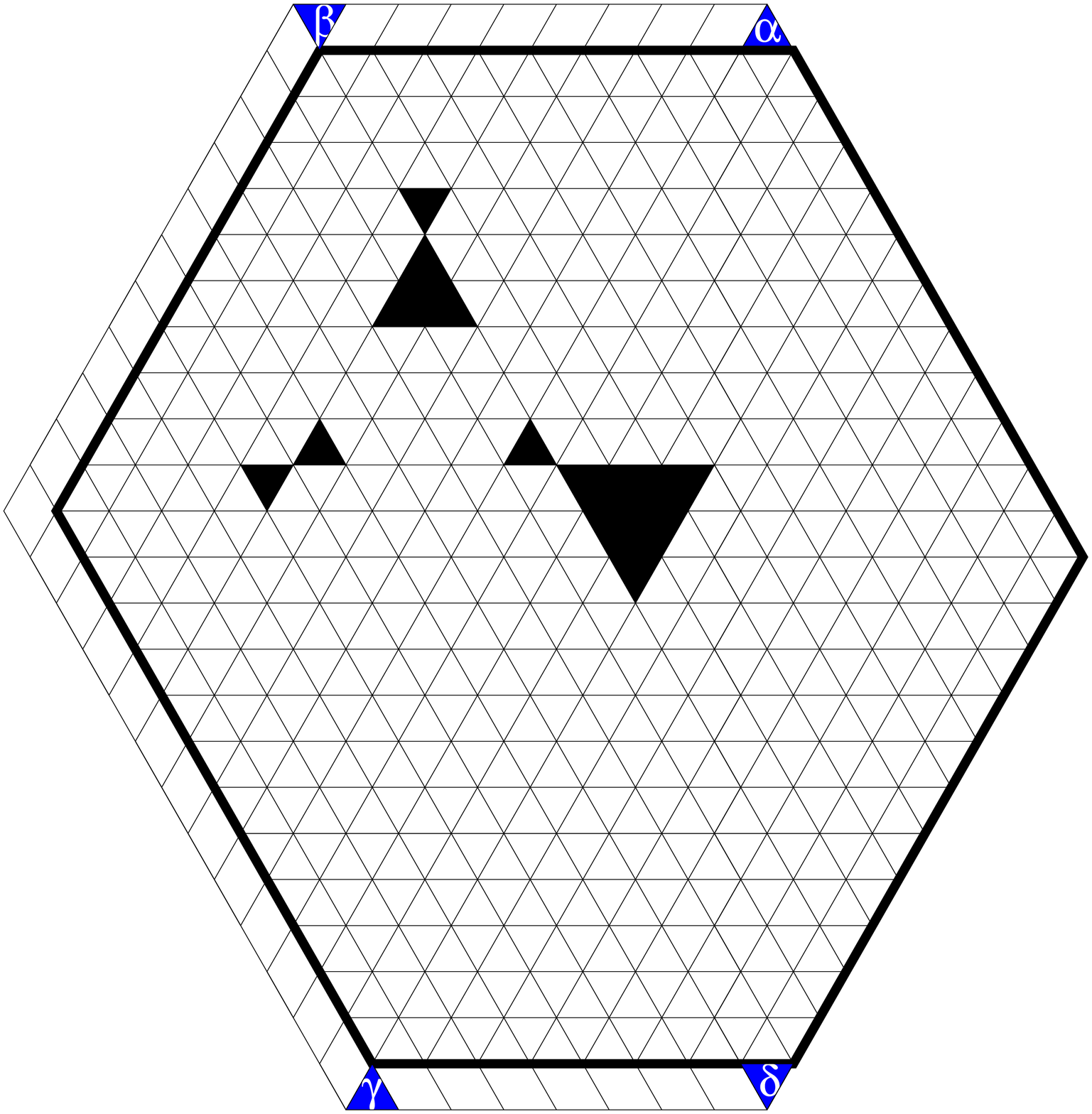}}
\hfill
}
\vskip0.15in
  \centerline{
\hfill
{\includegraphics[width=0.41\textwidth]{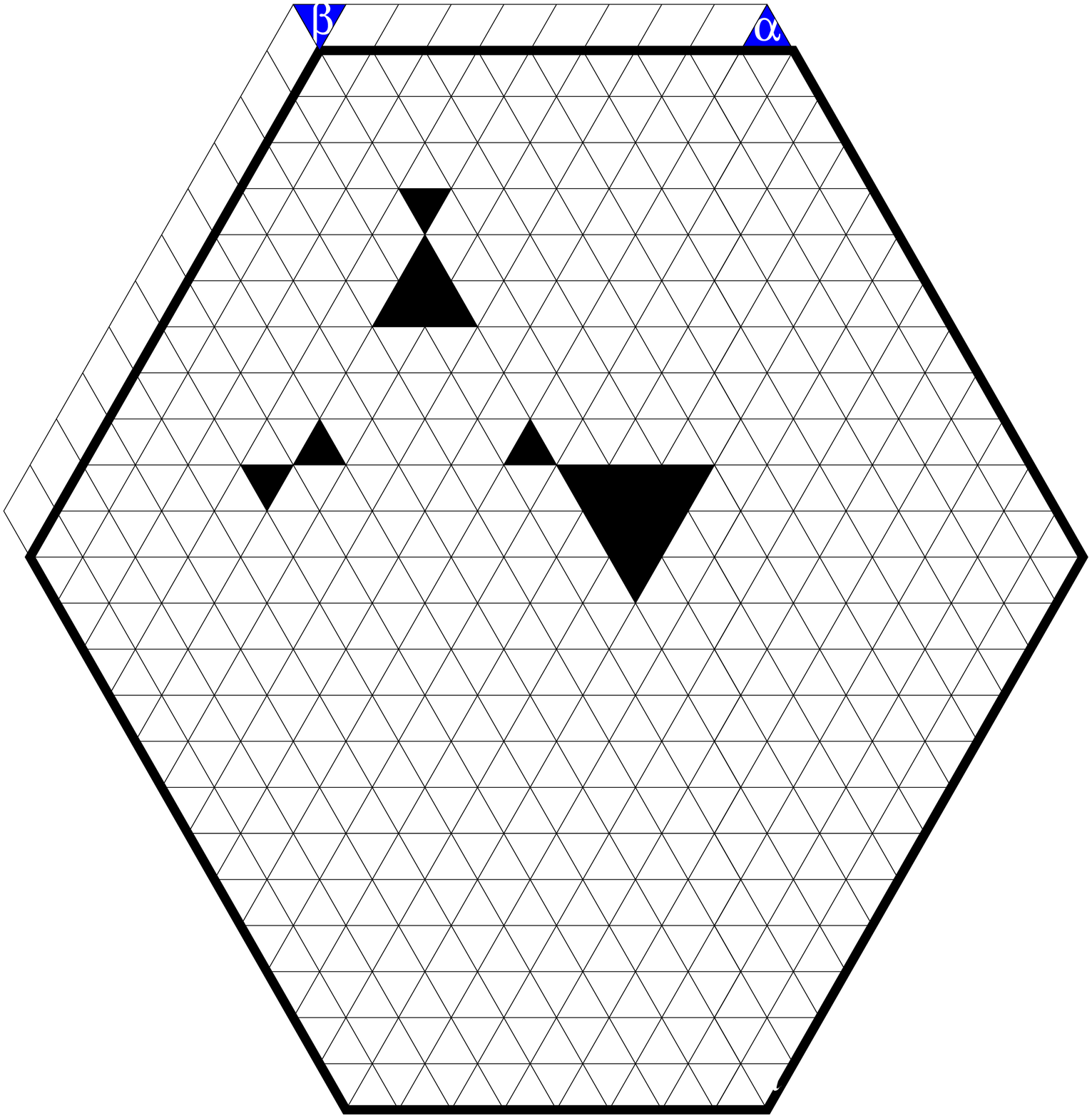}}
\hfill
{\includegraphics[width=0.41\textwidth]{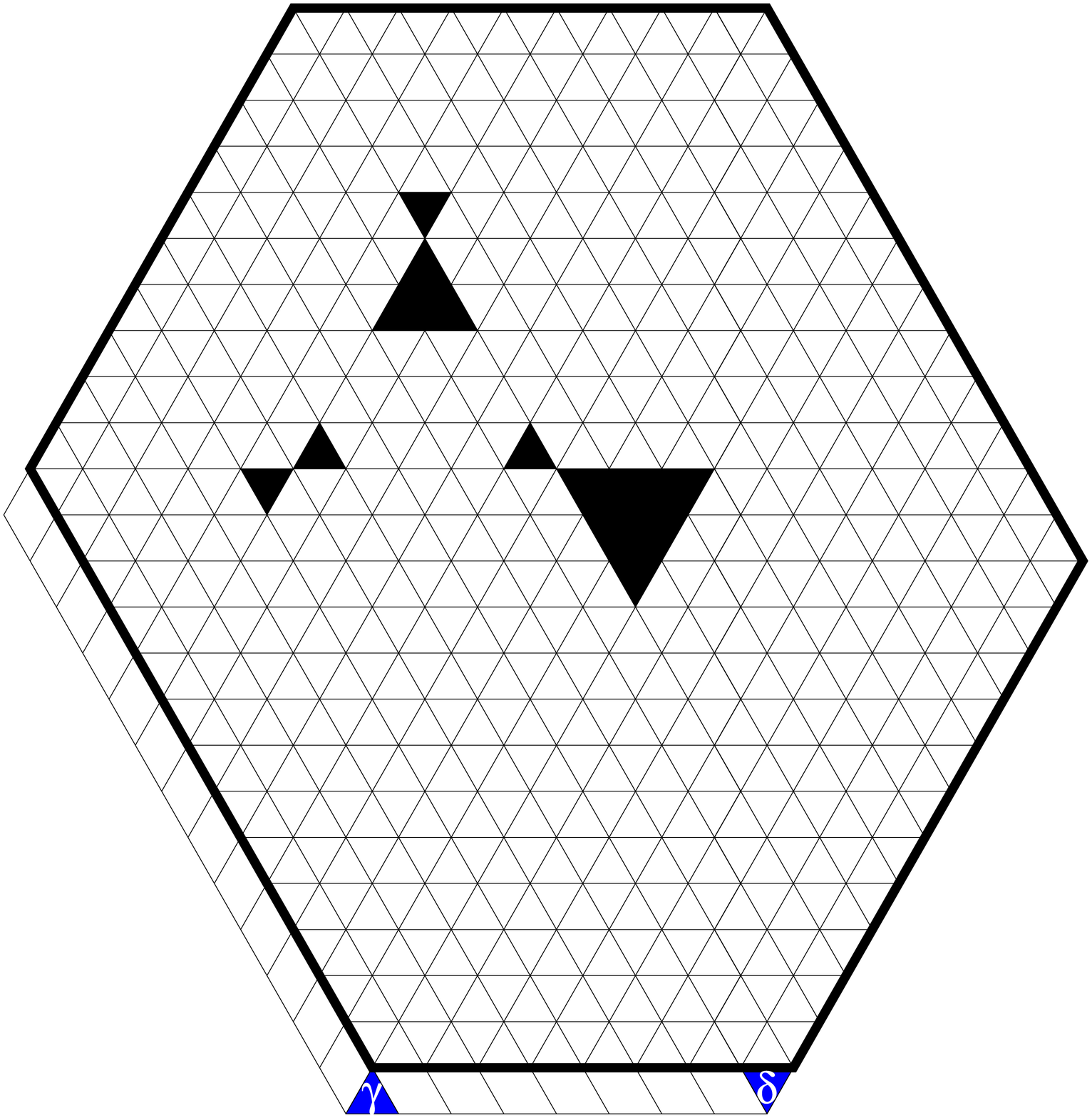}}
\hfill
}
\vskip0.15in
\centerline{
\hfill
{\includegraphics[width=0.41\textwidth]{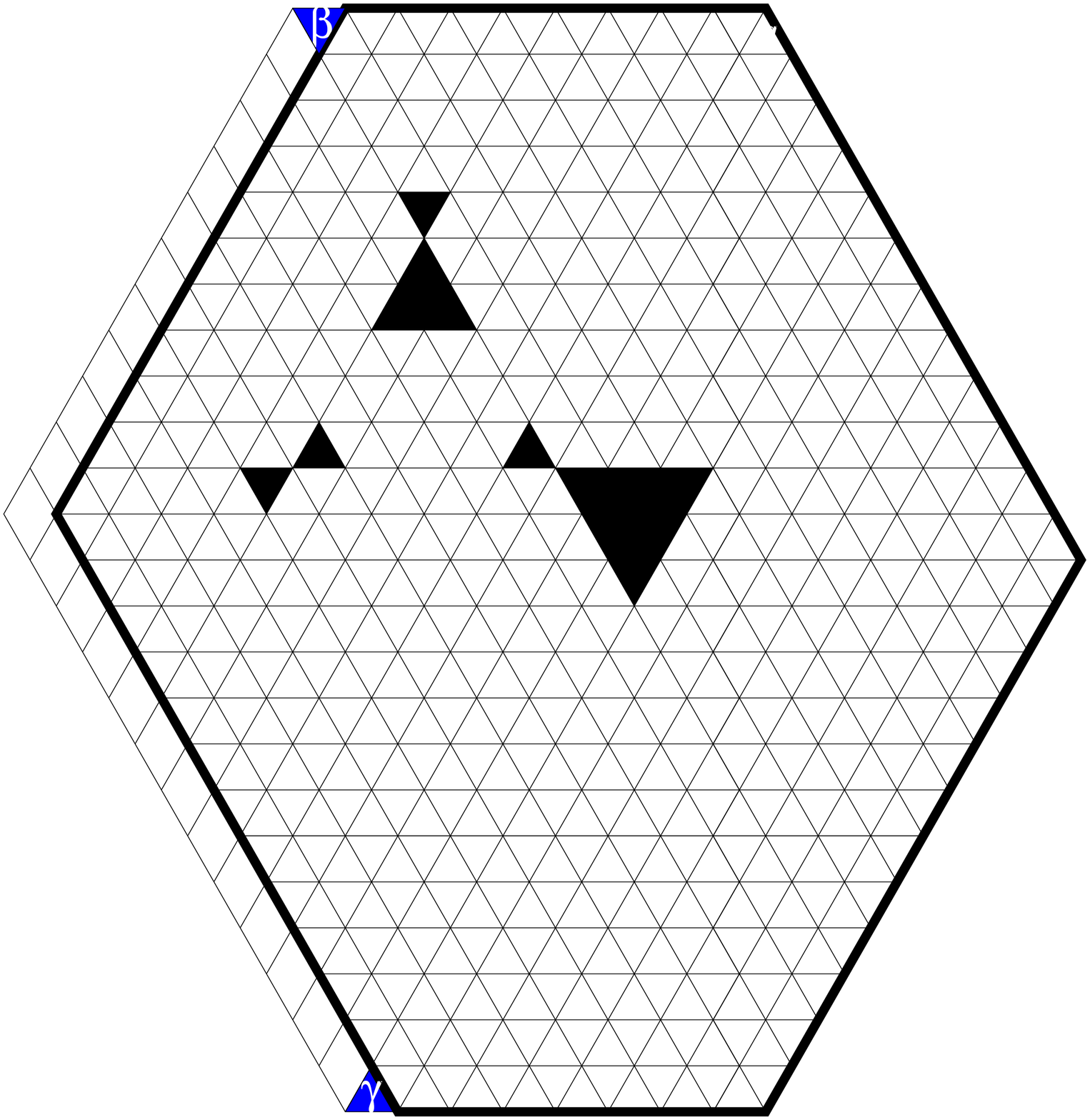}}
\hfill
{\includegraphics[width=0.41\textwidth]{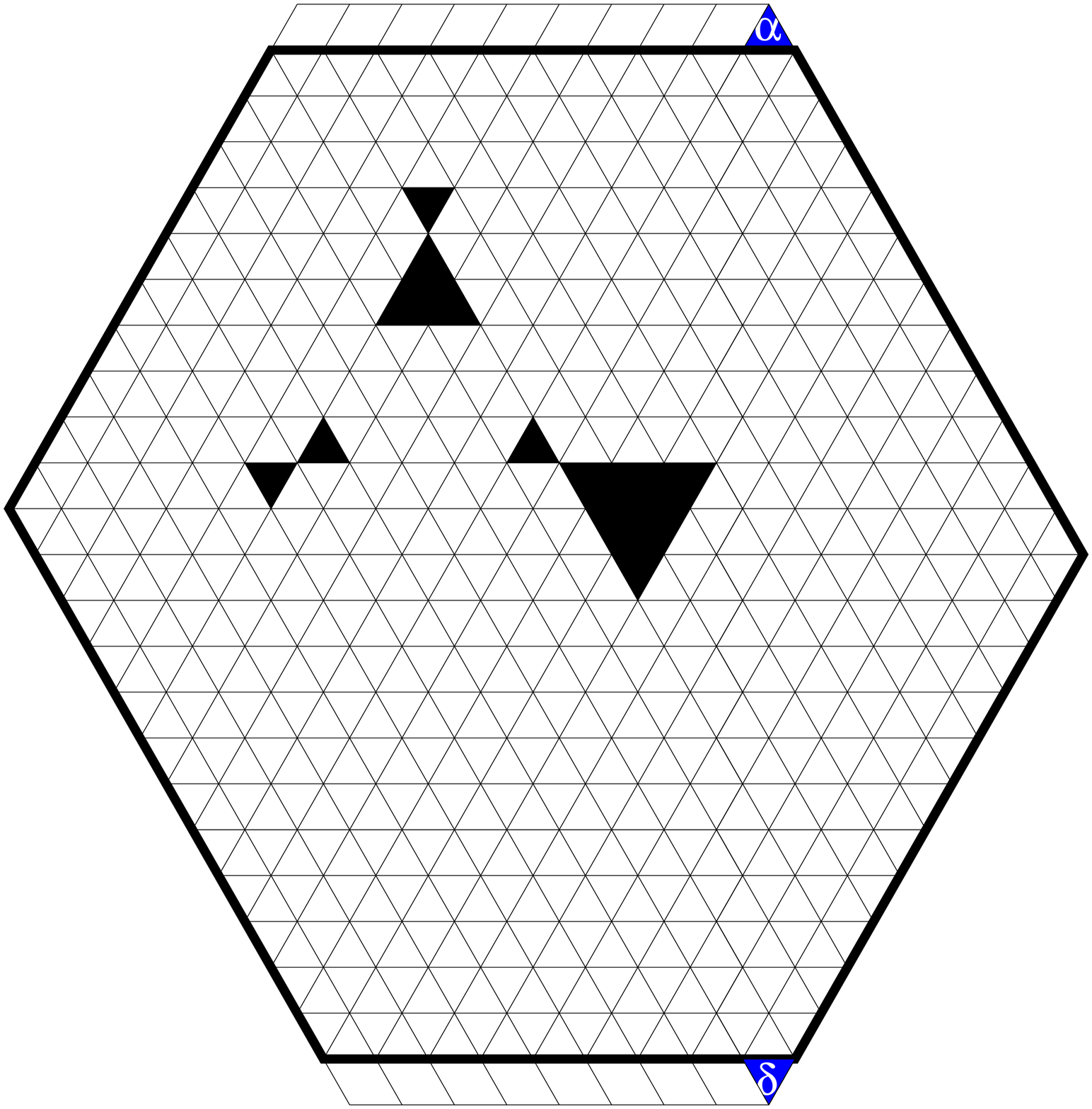}}
\hfill
}
%\vskip-0.1in
  \caption{\label{fca} Obtaining the recurrence for the let hand side of \eqref{ecd}.}
\vskip-0.05in
\end{figure}

%\pagebreak
%{\it Proof of Theorem \ref{tba}.}
We claim that in order to prove the theorem, it suffices to show that for any triad hexagon $R$ we have

\begin{equation}
\label{ecb}
\frac{\M(R)}{\M(\thickbar{R})}=\dfrac
{\w^{(R)}\dfrac{\ka_A^{(R)}\ka_B^{(R)}\ka_C^{(R)}}{\ka_{BC}^{(R)}\ka_{AC}^{(R)}\ka_{AB}^{(R)}}}
{\w^{(\thickbar{R})}\dfrac{\ka_{A_0}^{(\thickbar{R})}\ka_{B_0}^{(\thickbar{R})}\ka_{C_0}^{(\thickbar{R})}}{\ka_{B_0C_0}^{(\thickbar{R})}\ka_{A_0C_0}^{(\thickbar{R})}\ka_{A_0B_0}^{(\thickbar{R})}}},
\end{equation}
where $\thickbar{R}$ is obtained from $R$ by squeezing out completely all three bowties (see Section 2 for the definition), and $A_0$, $B_0$ and $C_0$ are its focal points.

Indeed, assume \eqref{ecb} holds for any triad hexagon. Then it holds in particular for $R$ replaced by $Q$. Crucially, since $Q$ is obtained from $R$ by a sequence of bowtie squeezing operations (see Section 2 for their definition), the region obtained from $Q$ by completely squeezing out its bowties is also $\thickbar{R}$ (i.e. $\thickbar{Q}=\thickbar{R}$).

%\newpage
%\vskip-0.15in
Therefore, we obtain

\begin{equation}
\label{ecc}
\frac{\M(Q)}{\M(\thickbar{R})}=\dfrac
{\w^{(Q)}\dfrac{\ka_{A_1}^{(Q)}\ka_{B_1}^{(Q)}\ka_{C_1}^{(Q)}}{\ka_{B_1C_1}^{(Q)}\ka_{A_1C_1}^{(Q)}\ka_{A_1B_1}^{(Q)}}}
{\w^{(\thickbar{R})}\dfrac{\ka_{A_0}^{(\thickbar{R})}\ka_{B_0}^{(\thickbar{R})}\ka_{C_0}^{(\thickbar{R})}}{\ka_{B_0C_0}^{(\thickbar{R})}\ka_{A_0C_0}^{(\thickbar{R})}\ka_{A_0B_0}^{(\thickbar{R})}}}.
\end{equation}

Combining equations \eqref{ecb} and \eqref{ecc} yields \eqref{ebe}, proving our claim.

We prove the equivalent form
\begin{equation}
\label{ecd}
\M(R)=\M(\thickbar{R})\dfrac
{\w^{(R)}\dfrac{\ka_A^{(R)}\ka_B^{(R)}\ka_C^{(R)}}{\ka_{BC}^{(R)}\ka_{AC}^{(R)}\ka_{AB}^{(R)}}}
{\w^{(\thickbar{R})}\dfrac{\ka_{A_0}^{(\thickbar{R})}\ka_{B_0}^{(\thickbar{R})}\ka_{C_0}^{(\thickbar{R})}}{\ka_{B_0C_0}^{(\thickbar{R})}\ka_{A_0C_0}^{(\thickbar{R})}\ka_{A_0B_0}^{(\thickbar{R})}}}
\end{equation}
of \eqref{ecb} by arguments that parallel those in Sections 3 and 4 of \cite{fv}. Namely, we prove \eqref{ecd} by showing that both sides satisfy the same recurrence. The formal proof is set up as a proof by induction.

%\begin{figure}[h]
%%\vskip0.15in
%  \centerline{
%\hfill
%{\includegraphics[width=0.50\textwidth]{xis0.eps}}
%\hfill
%}
%\vskip0.1in
%  \caption{\label{fcb} The region $R$ when $x=0$.}
%%\vskip-0.15in
%\end{figure}

The recurrence satisfied by the left hand side of \eqref{ecd} is obtained by applying Kuo condensation as follows.

%%
%\begin{align}
%&
%\M(R_{x,y,z}^{A,B,C}(a,b,c,a',b',c'))\M(R_{x-1,y,z-1}^{A,B,C}(a,b,c,a',b',c'))
%=
%\nonumber
%\\[5pt]
%&\ \ \ \ \ \ \ \ \ \ \ \ \ \ \ \ \ \ \ \ \ \ \ \ \ \ \ \ \ \ \ \ 
%\M(R_{x-1,y,z+1}^{A,B,C}(a,b,c,a',b',c'))\M(R_{x,y,z-1}^{A,B,C}(a,b,c,a',b',c'))
%\nonumber
%\\[5pt]
%&\ \ \ \ \ \ \ \ \ \ \ \ \ \ \ \ \ \ \ \ \ \ \ \ \ \ \ \ \ 
%+\M(R_{x-1,y+1,z}^{A,B,C}(a,b,c,a',b',c'))\M(R_{x,y-1,z}^{A,B,C}(a,b,c,a',b',c')).
%\label{ece}
%\end{align}  
%%
%
%Try induction on $x+min(y,z)$ --- not so good because we've already broken the symmetry by assuming $\delta_a>0$...

%\newpage

%NOTE: Now we'll have to write out the indices of R, and then also for R0... So switch to a notation that allows indices... Perhaps $\bar{R}$ instead of $R$, and $Q$ instead of $R_1$?

Let $G$ be the planar dual graph of the region $R_{x,y,z}^{A,B,C}(a,b,c,a',b',c')$, choose the vertices $\alpha$, $\beta$, $\gamma$ and $\delta$ of $G$ to be the duals of the unit triangles indicated on the top right in Figure \ref{fca}, and apply Theorem \ref{tca}. Then all six graphs in the equation resulting from \eqref{eca} are planar duals of regions that become triad hexagons once all forced lozenges are removed from them (this is illustrated in Figure \ref{fca}). 

The change in the $x$-, $y$- and $z$-parameters of the resulting triad hexagons is easily read off from Figure \ref{fca}. As the lobe sizes $a$, $b$, $c$, $a'$, $b'$, $c'$ and the geometrical position of the focal points $A$, $B$ and $C$ remain unchanged for all resulting regions, one sees that \eqref{eca} becomes
\begin{align}
&
\M(R_{x,y,z}^{A,B,C}(a,b,c,a',b',c'))\M(R_{x,y-1,z-1}^{A,B,C}(a,b,c,a',b',c'))
=
\nonumber
\\[5pt]
&\ \ \ \ \ \ \ \ \ \ \ \ \ \ \ \ \ \ \ \ \ \ \ \ \ \ \ \ \ \ \ \ 
\M(R_{x,y-1,z}^{A,B,C}(a,b,c,a',b',c'))\M(R_{x,y,z-1}^{A,B,C}(a,b,c,a',b',c'))
\nonumber
\\[5pt]
&\ \ \ \ \ \ \ \ \ \ \ \ \ \ \ \ \ \ \ \ \ \ \ \ \ \ \ \ \ 
+\M(R_{x-1,y,z}^{A,B,C}(a,b,c,a',b',c'))\M(R_{x+1,y-1,z-1}^{A,B,C}(a,b,c,a',b',c')).
\label{ece}
\end{align}  
We use this recurrence to prove \eqref{ecd} by induction on $x+y+z$. As $x,y,z\geq1$ is a necessary condition in order for all the regions in \eqref{ece} to be defined, the cases when $x=0$, $y=0$ or $z=0$ will be base cases of our induction.

In fact, in order for \eqref{ece} to hold, an additional condition needs to hold: The pattern of forced lozenges needs to be as indicated in Figure \ref{fca}. Assuming $x,y,z\geq1$ (so that there is room for the indicated unit triangles along the corresponding edges), this happens if and only if $(i)$ the distance between the $a$-lobe and the norhern side of the hexagon is positive, $(ii)$ the distance between the $b$-lobe and the southwestern side of the hexagon is positive, $(iii)$ the NW-depth of the triad is positive, and $(iv)$ the S-depth of the triad is positive.

Our base cases will be the following: (1) $x=0$, $y=0$ or $z=0$; (2) at least two of the bowties touch the hexagon side they are facing; and (3) at least one of the three triad depths is equal to zero.

Each of these cases is handled separately. If none of them occurs, then $x,y,z\geq1$, all three triad depths are positive, and at most one bowtie touches the hexagon side it is facing. By symmetry, we may assume that the $(a,a')$- and $(b,b')$-bowtie do not touch the hexagon side they are facing, and therefore equation \eqref{ece} holds.

The detais of the case  $x=0$ were presented in the previous section. The cases when $y=0$  or $z=0$ follow by symmetry.

%An illustrative example of this case is shown in Figure \ref{fcb}. Due to the fact that in this case the length of the top side is $a+b+c$, in any tiling of $R$, the paths of lozenges that start upward from the lobes of sizes $a$, $b$ and $c$ are all the paths that end on the top side. This implies that the top shaded hexagon $H_1$ in Figure~\ref{fcb} is always internally tiled.

%Since the length of the bottom side is $a'+b'+c'$, the same argument argument shows that the bottom shaded hexagon $H_2$ in Figure \ref{fcb} must also be internally tiled. Since the lozenge tiling is forced on the leftover portion of $R$ (see Figure \ref{fcb}), it follows that
%%
%\begin{equation}
%\M(R)=\M(H_1)\M(H_2).
%\label{ecf}  
%\end{equation}
%%
%Notice that both $H_1$ and $H_2$ can be regarded as triad hexagons, with the same focal points $A$, $B$, $C$ as $R$: For $H_1$, the removed bowties are of type $(a,a')$, $(0,0)$ and $(0,0)$, while for $H_2$ --- whose top side has length zero! --- they are of type $(0,a')$, $(0,b')$ and $(0,c')$ (counterclockwise from top).

%Note also that $H_1$ is an instance of a ``magnet bar region,'' a family of regions whose number of tilings was determined in \cite{ff}. Furthermore, $H_2$ is a special case of a ``doubly intruded hexagon,'' a class of regions whose tilings were enumerated in \cite{2i}. One readily checks that the explicit product formulas given in \cite{ff} and \cite{2i} for $H_1$ and $H_2$ agree with the conceptual formula~\eqref{ebe}.

Base case (2) is the object of Section 6. Finally, base case (3) follows by Section 5.

For the induction step, let $x,y,z\geq1$ and assume that \eqref{ecd} holds for all triad hexagons with the sum of their $x$-, $y$- and $z$-parameters strictly less than $x+y+z$. Consider the triad hexagon $R_{x,y,z}^{A,B,C}(a,b,c,a',b',c')$, and assume that conditions $(i)--(iv)$ above hold. We need to deduce that~\eqref{ecd} holds also for $R_{x,y,z}^{A,B,C}(a,b,c,a',b',c')$.

As $x,y,z\geq1$ and conditions $(i)$--$(iv)$ hold, we can apply \eqref{ecd}. Since for the last five triad hexagons in \eqref{ecd} the sum of the $x$-, $y$- and $z$-parameters is strictly less than $x+y+z$, by the induction hypothesis, their number of lozenge tilings can be expressed as indicated by formula \eqref{ecd}. Do this for each of these five regions in \eqref{ecd}. This yields a certain expression for $\M(R_{x,y,z}^{A,B,C}(a,b,c,a',b',c'))$. To complete the proof, we need to verify that this expression agrees with the right hand side of~\eqref{ecd}. This amounts to checking that the right hand side of~\eqref{ecd} satisfies recurrence \eqref{ece}. We carry out this verification in Section 9. This completes the proof.~\epf

\section{Verifying that the right hand side of \eqref{ecd} satisfies recurrence \eqref{ece}}

Figure \ref{fca} illustrates the six regions in equation \eqref{ece}. On the top left is the triad hexagon $R=R_{x,y,z}^{A,B,C}(a,b,c,a',b',c')$. For ease of reference, denote the top right, center left, center right, bottom left and bottom right regions in Figure \ref{fca} by $R_2$, $R_3$, $R_4$, $R_5$ and $R_6$, respectively.

Then in order to verify that the right hand side of \eqref{ecd} satisfies recurrence \eqref{ece}, we need to prove that

\begin{align}
  &
\M(\thickbar{R})\,\dfrac
{\w^{(R)}\dfrac{\ka_A^{(R)}\ka_B^{(R)}\ka_C^{(R)}}{\ka_{BC}^{(R)}\ka_{AC}^{(R)}\ka_{AB}^{(R)}}}
{\w^{(\thickbar{R})}\dfrac{\ka_{A_0}^{(\thickbar{R})}\ka_{B_0}^{(\thickbar{R})}\ka_{C_0}^{(\thickbar{R})}}{\ka_{B_0C_0}^{(\thickbar{R})}\ka_{A_0C_0}^{(\thickbar{R})}\ka_{A_0B_0}^{(\thickbar{R})}}}
\ 
\M(\thickbar{R}_2)\,\dfrac
{\w^{(R_2)}\dfrac{\ka_A^{(R_2)}\ka_B^{(R_2)}\ka_C^{(R_2)}}{\ka_{BC}^{(R_2)}\ka_{AC}^{(R_2)}\ka_{AB}^{(R_2)}}}
{\w^{(\thickbar{R}_2)}\dfrac{\ka_{A_0}^{(\thickbar{R}_2)}\ka_{B_0}^{(\thickbar{R}_2)}\ka_{C_0}^{(\thickbar{R}_2)}}{\ka_{B_0C_0}^{(\thickbar{R}_2)}\ka_{A_0C_0}^{(\thickbar{R}_2)}\ka_{A_0B_0}^{(\thickbar{R}_2)}}}
\nonumber
\\[10pt]
&\ \ \ \ \ \ \ \ \ \ \ \ \ \ \ \ \ \ \ \ \ \ \ \ 
=
\M(\thickbar{R}_3)\,\dfrac
{\w^{(R_3)}\dfrac{\ka_A^{(R_3)}\ka_B^{(R_3)}\ka_C^{(R_3)}}{\ka_{BC}^{(R_3)}\ka_{AC}^{(R_3)}\ka_{AB}^{(R_3)}}}
{\w^{(\thickbar{R}_3)}\dfrac{\ka_{A_0}^{(\thickbar{R}_3)}\ka_{B_0}^{(\thickbar{R}_3)}\ka_{C_0}^{(\thickbar{R}_3)}}{\ka_{B_0C_0}^{(\thickbar{R}_3)}\ka_{A_0C_0}^{(\thickbar{R}_3)}\ka_{A_0B_0}^{(\thickbar{R}_3)}}}
\ 
\M(\thickbar{R}_4)\,\dfrac
{\w^{(R_2)}\dfrac{\ka_A^{(R_4)}\ka_B^{(R_4)}\ka_C^{(R_4)}}{\ka_{BC}^{(R_4)}\ka_{AC}^{(R_4)}\ka_{AB}^{(R_4)}}}
{\w^{(\thickbar{R}_4)}\dfrac{\ka_{A_0}^{(\thickbar{R}_4)}\ka_{B_0}^{(\thickbar{R}_4)}\ka_{C_0}^{(\thickbar{R}_4)}}{\ka_{B_0C_0}^{(\thickbar{R}_4)}\ka_{A_0C_0}^{(\thickbar{R}_4)}\ka_{A_0B_0}^{(\thickbar{R}_4)}}}
\nonumber
\\[10pt]
&\ \ \ \ \ \ \ \ \ \ \ \ \ \ \ \ \ \ \ \ \ \ \ \ 
+
\M(\thickbar{R}_5)\,\dfrac
{\w^{(R_5)}\dfrac{\ka_A^{(R_5)}\ka_B^{(R_5)}\ka_C^{(R_5)}}{\ka_{BC}^{(R_5)}\ka_{AC}^{(R_5)}\ka_{AB}^{(R_5)}}}
{\w^{(\thickbar{R}_5)}\dfrac{\ka_{A_0}^{(\thickbar{R_5})}\ka_{B_0}^{(\thickbar{R_5})}\ka_{C_0}^{(\thickbar{R_5})}}{\ka_{B_0C_0}^{(\thickbar{R_5})}\ka_{A_0C_0}^{(\thickbar{R_5})}\ka_{A_0B_0}^{(\thickbar{R_5})}}}
\ 
\M(\thickbar{R}_6)\,\dfrac
{\w^{(R_6)}\dfrac{\ka_A^{(R_6)}\ka_B^{(R_6)}\ka_C^{(R_6)}}{\ka_{BC}^{(R_6)}\ka_{AC}^{(R_6)}\ka_{AB}^{(R_6)}}}
{\w^{(\thickbar{R}_6)}\dfrac{\ka_{A_0}^{(\thickbar{R}_6)}\ka_{B_0}^{(\thickbar{R}_6)}\ka_{C_0}^{(\thickbar{R}_6)}}{\ka_{B_0C_0}^{(\thickbar{R}_6)}\ka_{A_0C_0}^{(\thickbar{R}_6)}\ka_{A_0B_0}^{(\thickbar{R}_6)}}},
\label{eda}
\end{align}
where for $i=2,\dotsc,6$, $\thickbar{R}_i$ is the region obtained from $R_i$ by completely squeezing out each of its three bowties. Note that the triad of bowties (and in particular the focal points) in the regions $R_i$ are the same as in $R$, and a similar statement relates the regions $\thickbar{R}_i$ and $\thickbar{R}$, for $i=2,\dotsc,6$.

It turns out that the products of the two fractions in each of the three terms in \eqref{eda} have the same value. Indeed, all twelve weights $\w$ are clearly equal, as the twelve involved regions share the same triad of bowties, and $\w$ only depends on the geometry of this triad (see \eqref{ebb}). Furthermore, one readily sees from equation \eqref{ebca} and Figure \ref{fca} that
\begin{equation}
  \ka_A^{(R)} \ka_A^{(R_2)}
  =
   \ka_A^{(R_3)} \ka_A^{(R_4)}
  =
   \ka_A^{(R_5)} \ka_A^{(R_6)}.
\label{edb}
\end{equation}
This is because in each of the six regions, the distance from the focal point $A$ to the northern boundary is equal to either $d$ or $d+1$, and its distance to the southern boundary is either $e$ or $e+1$, for some non-negative integers $d$ and $e$. Then by \eqref{ebca}, each of the three quentities in~\eqref{edb} is equal to $de(d+1)(e+1)$.

In a similar way, one sees that
\begin{equation}
  \ka_B^{(R)} \ka_B^{(R_2)}
  =
   \ka_B^{(R_3)} \ka_B^{(R_4)}
  =
   \ka_B^{(R_5)} \ka_B^{(R_6)},
\label{edc}
\end{equation}
\vskip0.05in
\begin{equation}
  \ka_C^{(R)} \ka_C^{(R_2)}
  =
   \ka_C^{(R_3)} \ka_C^{(R_4)}
  =
   \ka_C^{(R_5)} \ka_C^{(R_6)},
\label{edd}
\end{equation}
\vskip0.03in
and also that
\begin{equation}
  \ka_{BC}^{(R)} \ka_{BC}^{(R_2)}
  =
   \ka_{BC}^{(R_3)} \ka_{BC}^{(R_4)}
  =
   \ka_{BC}^{(R_5)} \ka_{BC}^{(R_6)},
\label{ede}
%\vskip0.1in
\end{equation}
\vskip0.05in
\begin{equation}
  \ka_{AC}^{(R)} \ka_{AC}^{(R_2)}
  =
   \ka_{AC}^{(R_3)} \ka_{AC}^{(R_4)}
  =
   \ka_{AC}^{(R_5)} \ka_{AC}^{(R_6)},
\label{edf}
\end{equation}
\vskip0.03in
and
\begin{equation}
  \ka_{AB}^{(R)} \ka_{AB}^{(R_2)}
  =
   \ka_{AB}^{(R_3)} \ka_{AB}^{(R_4)}
  =
   \ka_{AB}^{(R_5)} \ka_{AB}^{(R_6)}.
\label{edg}
\end{equation}
Therefore, the products of the couples $\ka$ at the numerators in \eqref{eda} are equal across the three terms. Since the barred regions at the denominators are special cases of regions at the numerators, the same conclusion holds also for the numerators in \eqref{eda}.

Thus, \eqref{eda} simplifies to
\begin{equation}
\M(\thickbar{R}) \M(\thickbar{R}_2) = 
\M(\thickbar{R}_3) \M(\thickbar{R}_4)+
\M(\thickbar{R}_5) \M(\thickbar{R}_6).
\end{equation}
However, this holds by Kuo's condensation identity \eqref{eca}. 

%w's cancel
%
%kA's cancel
%
%kB's cancel
%
%kC's cancel
%
%kBC's cancel
%
%kAC's cancel
%
%kAB's cancel
%
%Sef!

\section{Proof of Lemma \ref{tileability}}

\begin{figure}[h]
%\vskip0.15in
  \centerline{
\hfill
{\includegraphics[width=0.90\textwidth]{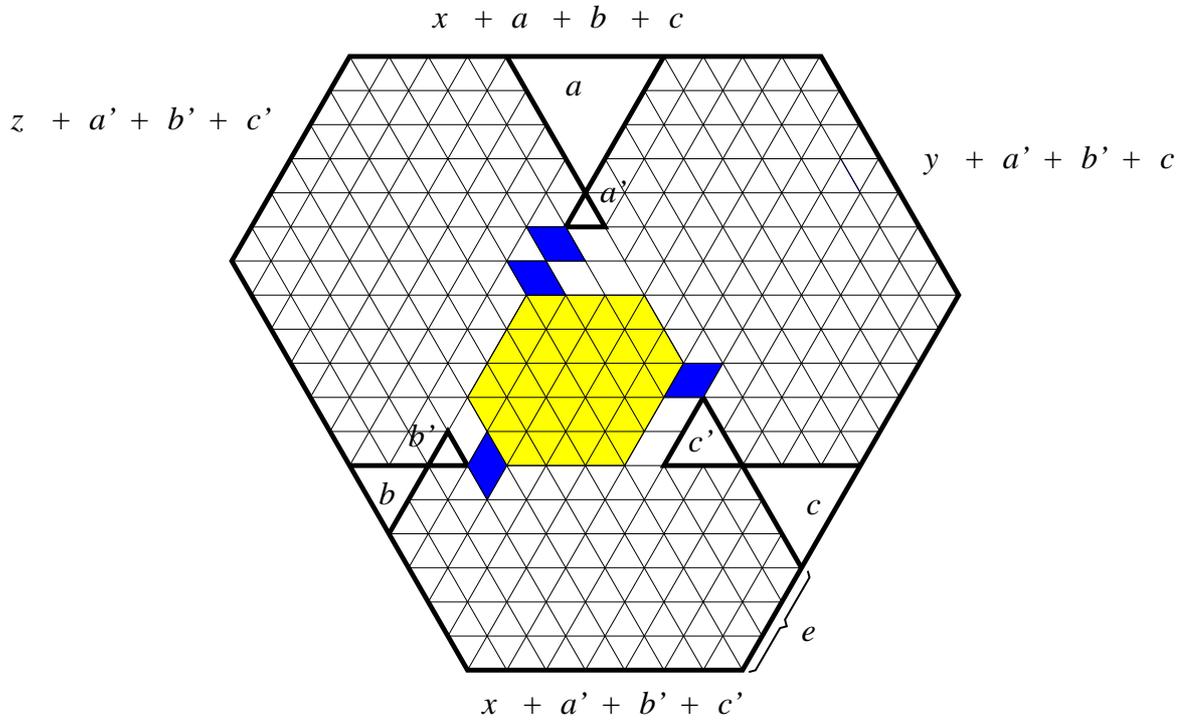}}
\hfill
%{\includegraphics[width=0.44\textwidth]{xis0bar.eps}}
%\hfill
}
%\vskip-0.1in
  \caption{\label{fka} The case when all three bowties touch the hexagon side they face.}
%\vskip-0.15in
\end{figure}

\begin{figure}[h]
%\vskip0.15in
  \centerline{
\hfill
{\includegraphics[width=0.60\textwidth]{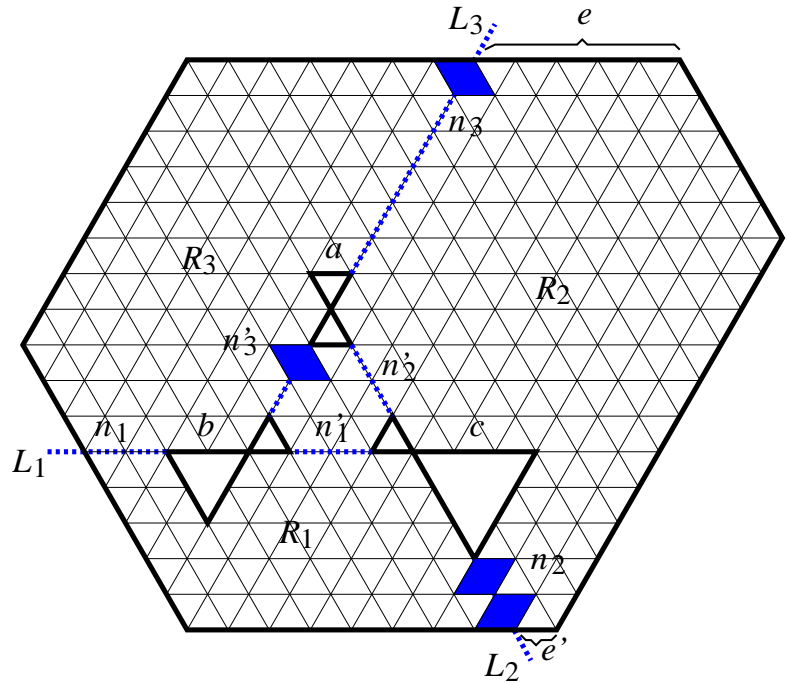}}
\hfill
%{\includegraphics[width=0.44\textwidth]{xis0bar.eps}}
%\hfill
}
%\vskip-0.1in
  \caption{\label{fkb} The case when the southern depth is 0.}
%\vskip-0.15in
\end{figure}

Part (b) follows directly from the definition of bowtie squeezing. Part (c) is an immediate consequence of parts (a) and (b).

To prove part (a), consider a triad hexagon $R=R_{x,y,z}^{A,B,C}(a,b,c,a',b',c')$ whose S-, NE- and NW-depths are non-negative. Note that if we translate eastward the two sides of $R$ making up its western boundary so that the translated sides still meet the top and bottom sides, but do not cross the bowtie triad, and we denote by $Q$ the smaller region obtained this way (which is also a triad hexagon), then $R$ is tileable if $Q$ is (indeed, simply tile the portion of $R$ that is in excess of $Q$ by its unique tiling). Analogous statements hold for the union of any two consecutive sides of the outer boundary of $R$.

Let $R^{(i)}$ be the region obtained from $R$ by translating its western boundary eastward, until it either touches the bowtie triad, or the NW-depth becomes zero, or the $x$-parameter becomes zero. Let $R^{(ii)}$ be the region obtained from $R^{(i)}$ by translating its eastern boundary westward, until it either touches the bowtie triad, or the NE-depth becomes zero, or the $x$-parameter becomes zero. Let $R^{(iii)}$ be the region obtained from $R^{(ii)}$ by translating its northeastern boundary sothwestward, until it either touches the bowtie triad, or the NE-depth becomes zero, or the $z$-parameter becomes zero. Finally, let $R^{(iv)}$ be the region obtained from $R^{(iii)}$ by translating its southwestern boundary northeastward, until it either touches the bowtie triad, or the S-depth becomes zero, or the $z$-parameter becomes zero. 

By the observation in the second paragraph of this proof, it suffices to show that $R^{(iv)}$ is tileable. By construction, $R^{(iv)}$ is a triad hexagon in which at least one of the following holds: (1) the three bowties touch the hexagon sides they are facing, or (2) one of the S-, NE- or NW-depths is equal to zero, or (3) one of $x$ and $z$ is zero. The latter case follows by Section 7 and case (1), as both the magnet bar and the snowman regions are special cases of the instance when each of the three bowties touches the facing hexagon side. We handle cases (1) and (2) below.

If the three bowties touch the hexagon sides they are facing, the situation is as pictured in Figure \ref{fka}. Denoting the focal distance by $f+a'+b'+c'$, we claim that $x\leq f$. Indeed, consider the hexagonal subregion below the bottom focal line. Since in any lattice hexagon the length difference of opposite sides is the same, we have that $(f+a'+b'+c')-(x+a'+b'+c')=e-b$, where $e$ is the length of the portion of the southeastern side below the $c$-lobe. Since the S-depth is non-negative, we have $e\geq b$, and we obtain $x\leq f$. By symmetry, we also get $y,z\leq f$.

Consider $f-x$ consecutive vertical lozenges straddling the bottom focal segment, next to the $b'$-lobe (the distance between the $b'$- and $c'$-lobes is $f+a'\geq f-x$, so there is room for these vertical lozenges). Consider analogous runs of $f-y$ and $f-z$ consecutive lozenges on the northeastern and northwestern focal segments, respectively, as indicated in Figure \ref{fka}. Remove the indicated $3f-x-y-z$ lozenges, and regard the leftover region as being made up of four parts: the part inside the focal triangle, and the three hexagonal regions outside it.

We claim that upon removing forced lozenges, the hexagonal region left over inside the focal triangle has equal opposite sides, and is therefore tileable. Indeed, the length of the southwestern side of this hexagon is $f-x+b'$, while the length of the side opposite it is $f+b'-(f-y)-(f-z)$, so the two are equal precisely if $2f=x+y+z$. However, this follows by noticing that the sum of the lengths of the southwestern and northwestern sides of the outer hexagon is equal to $\de(A,N)+\de(A,S)$.

Furthermore, the hexagonal region under the bottom focal line, with the dents created by the considered $f-x$ vertical lozenges, is balanced\footnote{ I.e., has the same number of up- and down-pointing unit triangles}, and this readily implies that it is tileable. By symmetry, so are the two dented hexagonal regions above the other two focal lines. It follows that the region $R$ itself is tileable.

In the remaining case, we may assume without loss of generality that the S-depth of $R$ is zero. There are eight cases to be distinguished here, depending on which side of the dotted lines $L_1$, $L_2$ and $L_3$ in Figure \ref{fkb} the western, southeastern and northeastern vertices of the hexagon are, respectively. The arguments in all cases are analogous. We discuss therefore only the case pictured in Figure~\ref{fkb}.

The three shown dotted lines $L_1$, $L_2$ and $L_3$ divide $R$ into four subregions: $R_1$, $R_2$, $R_3$ and the focal triangle. We show that six non-negative integers $n_1$, $n_2$, $n_3$, $n_1'$, $n_2'$, $n_3'$ exist so that if one takes a run of $n_1$ consecutive vertical lozenges straddling $L_1$ immediately to the right of the southwestern side, $n_1'$ consecutive vertical lozenges straddling $L_1$ immediately to the right of the $b'$-lobe, and four analogous runs of consecutive lozenges straddling $L_2$ and $L_3$, then the portions $R_1'$, $R_2'$, $R_3'$ and $T'$ of $R_1$, $R_2$, $R_3$ and the focal triangle left over after removing these $n_1+n_2+n_3+n_1'+n_2'+n_3'$ lozenges are each tileable. This will imply that $R$ itself is tileable.

It is easy to see that $R_1'$, $R_2'$, $R_3'$ and $T'$ are tileable if and only if they are balanced.

Choose $n_1=n_1'=0$, and $n_2=b$. Since the S-depth of $R$ is zero, this choice makes $R_1'$ balanced, hence tileable. A straightforward comparison of the number of up- and down-pointing unit triangles\footnote{ Using also the fact that the difference between the number of unit triangles of the two orientations in a lattice hexagon is equal to the difference between the lengths of any two opposite sides.} in $R_2'$ shows that, with this choice of $n_2$, $R_2'$ is balanced precisely if
\begin{equation}
n_2'=e'-e-c-b+n_3,  
\label{eka}
\end{equation}
where $e$ and $e'$ are the lengths indicated in Figure \ref{fkb}.

We claim that to finish the proof of the tileability of $R$, it suffices to show that there exist a solution $n_2'$ and $n_3$ of \eqref{eka} satisfying
\begin{align}
0&\leq n_2'\leq f
\label{ekb}
\\
0&\leq n_3\leq \delta,
\label{ekc}
\end{align}
where as before the focal length is $f+a'+b'+c'$, and $\delta$ is the distance (measured in lattice spacings) between the $a$-lobe and the top side of $R$. Indeed, by \eqref{eka}, $R_2$ is balanced, hence tilable. Choose $n_3'=f-n_2'$. This guarantees that $T'$ is balanced, so it is tilable. Furthermore, since $R$, $R_1'$, $R_2'$ and $T'$  are all balanced, it follows that $R_3'$ is also balanced, hence tilable. This completes the proof for case (2).

We now show that \eqref{eka} has solutions satisfying \eqref{ekb} and \eqref{ekc}. We have
\begin{align}
n_2'\leq f &\Leftrightarrow e'-e-c-b+n_3\leq f\nonumber\\
&\Leftrightarrow(f-y+b+c)-c-b+n_3\leq f\nonumber\\
&\Leftrightarrow n_3\leq y,
\label{ekd}
\end{align}
where we used that $e'-e=(f+a'+b'+c'+b+c)-(y+a'+b'+c')=f-y+b+c$ (which holds due to the fact that in the lattice hexagon $R_2$ the difference between the lengths of opposite sides is the same).

Similarly, we have
\begin{align}
n_2'\geq 0 &\Leftrightarrow e'-e-c-b+n_3\geq 0\nonumber\\
&\Leftrightarrow n_3\geq y-f.
\label{eke}
\end{align}
Therefore, in order to prove that \eqref{eka} has solutions satisfying \eqref{ekb} and \eqref{ekc}, we need to show that the intervals $[y-f,y]$ and $[0,\delta]$ are not disjoint. For this it suffices to prove that we cannot have $\delta<y-f$. Since in the hexagon $R_2$ the difference between the lengths of opposite sides is the same, have that 
\begin{align}
f-y+b+c&=(f+a'+b'+c'+b+c)-(y+a'+b'+c')\nonumber\\
&=(z+a+b+c)-(\delta+a)\nonumber\\
&=z-\delta+b+c,
\label{ekf}
\end{align}
and therefore $f-y=z-\delta$. But then $y-f=\delta-z\leq\delta$, and the proof is complete.

\section{Concluding remarks}

In this paper we presented a simple product formula that relates the number of lozenge tilings of two triad hexagons (hexagonal regions with a triad of bowties removed from them) that can be obtained from one another by a sequence of bowtie squeezing operations. One new aspect of this formula is that the number of tilings of the two involved regions is not given in general by a simple product formula, but their ratio always is (see \cite{Lshuffle}, \cite{LRshuffle} and \cite{Byun} for another similar phenomenon). Several previous results from the literature readily follow from our result, including  Lai's formula \cite{Lai3dent} for the number of lozenge tilings of hexagons with three dents, and Ciucu and Krattenthaler's formula \cite{ff} concering hexagons with a removed shamrock (see Section 2). 

%general --- many previous results follow as special cases

Another new aspect is that our formula is conceptual --- it is determined by the geometry of the triad of bowties and the distances from the focal points to the sides of the hexagon. This provided us with three advantages: $(i)$ we were able to avoid the somewhat tedious splitting into the cases when $x$, $y$ and $z$ do or do not have the same parity, $(ii)$ the base cases and the verification that the claimed formula satisfies the recurrence could be handled essentially with no calculations, just looking at the relevant figures, and $(iii)$ we were able to present the full details of the calculations in the relatively short sections 7 and 9.

%conceptual --- avoided painful cases of x,y,z same parity or different!

%allowed to present full details of calculations in short space

%base case/ verification that formula satisfies recurrence --- no calculations!!

\end{document}